\DeclareRobustCommand{\SkipTocEntry}[5]{}
\tikzset{->-/.style={decoration={
  markings,
  mark=at position #1 with {\arrow{>}}},postaction={decorate}}}
\tikzset{->-/.default=.5}
\newcounter{alphathm}
\theoremstyle{plain}
\newtheorem{theorem}{Theorem}
\newtheorem{lemma}[theorem]{Lemma}
\newtheorem{proposition}[theorem]{Proposition}
\newtheorem{corollary}[theorem]{Corollary}
\newtheorem*{claim*}{Claim}
\theoremstyle{definition}
\newtheorem{definition}[theorem]{Definition}
\newtheorem{example}[theorem]{Example}
\newtheorem{remark}[theorem]{Remark}
\newtheorem{remark-convention}[theorem]{Remark-Convention}
\numberwithin{equation}{section}
\numberwithin{theorem}{section}
\newcommand{\fakeenv}{} 
\newenvironment{restate}[2]  
{ 
 \renewcommand{\fakeenv}{#2} 
 \theoremstyle{plain} 
 \newtheorem*{\fakeenv}{#1~\ref{#2}} 
 \begin{\fakeenv}
}
{
 \end{\fakeenv}
}
\newcommand{\RR}{\mathbb{R}}
\newcommand{\ZZ}{\mathbb{Z}} 
\newcommand{\calA}{\mathcal{A}}
\newcommand{\calC}{\mathcal{C}}
\newcommand{\calD}{\mathcal{D}}
\newcommand{\calL}{\mathcal{L}}
\newcommand{\calN}{\mathcal{N}}
\newcommand{\calO}{\mathcal{O}}
\newcommand{\calP}{\mathcal{P}}
\newcommand{\calT}{\mathcal{T}}
\newcommand{\calZ}{\mathcal{Z}}
\newcommand{\he}{\hat{e}}
\newcommand{\hv}{\hat{v}}
\newcommand{\hw}{\hat{w}}
\newcommand{\oY}{\overline{Y}}
\newcommand{\hT}{\widehat T}
\newcommand{\hZ}{\widehat Z}
\newcommand{\tM}{{\widetilde M}}
\newcommand{\tU}{{\widetilde U}}
\newcommand{\abs}[1]{\left\lvert {#1} \right\rvert} 
\newcommand{\I}[1]{\langle #1 \rangle}
\newcommand{\bd}{\partial}
\newcommand{\from}{\colon\thinspace}
\newcommand{\FF}{\mathrm{FF}}
\newcommand{\ZF}{\mathcal{Z}\mathrm{F}}
\newcommand{\coS}{\mathrm{CoS}}
\newcommand{\param}%
	{{\mathchoice{\mkern1mu\mbox{\raise2.2pt\hbox{$\centerdot$}}\mkern1mu}%
	{\mkern1mu\mbox{\raise2.2pt\hbox{$\centerdot$}}\mkern1mu}%
	{\mkern1.5mu\centerdot\mkern1.5mu}{\mkern1.5mu\centerdot\mkern1.5mu}}}
\DeclareMathOperator{\Aut}{Aut}
\DeclareMathOperator{\Inc}{Inc}
\DeclareMathOperator{\Isom}{Isom}
\DeclareMathOperator{\Mon}{Mon}
\DeclareMathOperator{\Out}{Out}
\DeclareMathOperator{\Stab}{Stab}
\DeclareMathOperator{\Sys}{Sys}
\DeclareMathOperator{\Teich}{Teich}
\DeclareMathOperator{\Wh}{Wh}
\DeclareMathOperator{\val}{val}
\DeclareMathOperator{\Mod}{Mod}
\title{Bounded projections to the $\calZ$--factor graph}
\author[M.~Clay]{Matt Clay}
\address{Department of Mathematical Sciences \\
University of Arkansas\\
Fayetteville, AR 72701}
\email{\href{mailto:mattclay@uark.edu}{mattclay@uark.edu}}
\author[C.~Uyanik]{Caglar Uyanik}
\address{Department of Mathematics \\
University of Wisconsin, Madison\\
480 Lincoln Drive\\ 
Madison, WI 53706}
\email{\href{mailto:caglar@math.wisc.edu}{caglar@math.wisc.edu}}
\begin{document}

\begin{abstract} 
Suppose $G$ is a free product $G = A_1 * A_2* \cdots * A_k * F_N$, where each of the groups $A_i$ is torsion-free and $F_N$ is a free group of rank $N$.  Let $\calO$ be the deformation space associated to this free product decomposition.  We show that the diameter of the projection of the subset of $\calO$ where a given element has bounded length to the $\calZ$--factor graph is bounded, where the diameter bound depends only on the length bound.  This relies on an analysis of the boundary of $G$ as a hyperbolic group relative to the collection of subgroups $A_i$ together with a given non-peripheral cyclic subgroup.  The main theorem is new even in the case that $G = F_N$, in which case $\calO$ is the Culler--Vogtmann outer space.  In a future paper, we will apply this theorem to study the geometry of free group extensions. 
\end{abstract}

\maketitle



\section{Introduction}\label{sec:intro}

Let $F$ be a finitely generated free group.  Given a subgroup $H \subseteq \Out(F)$, there is an extension of $F$ by $H$, denoted $E_H$, obtained via the following diagram
\begin{equation}\label{eq:ses}
\xymatrix{1\ar[r] & F\ar[r]\ar@{=}[d] & \Aut(F) \ar[r]^p &  \Out(F) \ar[r] &  1 \\
1\ar[r] & F\ar[r]  & E_H\ar[r]\ar[u]  & H\ar[r]\ar[u] & 1}
\end{equation}
where $E_H = p^{-1}(H)$.  Characterizing the subgroups $H \subseteq \Out(F_N)$ such that the extension $E_H$ is hyperbolic is an open question in geometric group theory.  A characterization of hyperbolicity for these types of free group extension allows one to determine when any free group extension is hyperbolic~\cite[Section~2.5]{ar:DT18}.

There is a long history behind this problem starting with Thurston's work on 3--manifolds that fiber over the circle~\cite{thurston98b}. For a surface $\Sigma$ of genus $g \geq 2$, the short exact sequence in the top row of \eqref{eq:ses} is the well known Birman exact sequence~\cite{ar:Birman}: 
\begin{equation*}
\xymatrix{1\ar[r] & \pi_1(\Sigma,*)\ar[r] & \Mod^{\pm}(\Sigma,*)\ar[r]^p & \Mod^{\pm}(\Sigma) \ar[r] & 1.
}
\end{equation*}
As for subgroups of $\Out(F)$, every subgroup $H<\Mod^{\pm}(\Sigma)$ gives rise to an extension $E_H = p^{-1}(H)$ of $\pi_{1}(\Sigma,*)$.  The seminal result of Thurston alluded to above implies that when $H = \I{f}$ is infinite cyclic the extension $E_H$ is hyperbolic precisely when $f$ is pseudo-Anosov.  This characterization was extended to finitely generated subgroups $H$ by combined work of Farb--Mosher~\cite{ar:FM02} and Hamenst\"adt~\cite{un:Ham}.  Specifically, their work shows that $E_H$ is hyperbolic if and only if $H$ is finitely generated and the orbit map into the Teichm\"uller space has quasi-convex image. Such a subgroup $H$ is called \emph{convex co-compact}.  Every convex co-compact subgroup is itself hyperbolic and purely pseudo-Anosov, meaning that every infinite order element is pseudo-Anosov~\cite{ar:FM02}.  
Hamenst\"adt~\cite{un:Ham} and Kent--Leininger~\cite{ar:KentLeininger} independently proved that a subgroup $H \subseteq \Mod^{\pm}(\Sigma)$ is convex co-compact if and only if $H$ is finitely generated and the orbit map $H \to \calC(\Sigma)$ into the curve complex is a quasi-isometric embedding.  Hence, for a finitely generated subgroup $H \subseteq \Mod^{\pm}(\Sigma)$, an extension $E_H$ is hyperbolic if and only if the orbit map $H \to \calC(\Sigma)$ is a quasi-isometric embedding.   

The situation in the setting of free group extensions has some partial progress analogous to the setting of surfaces, but a full characterization is still not known. There are two distinct analogs of pseudo-Anosov maps for free groups that play unique roles in the theory: fully irreducible elements and atoroidal elements.  An element $\varphi \in \Out(F)$ is called \emph{fully irreducible} if no power of $\varphi$ fixes the conjugacy class of a proper free factor of $F$.  These are elements of $\Out(F)$ that act on the closure of the Outer space with North-South dynamics~\cite{ar:LL}, and are precisely the elements of $\Out(F)$ that act as loxodromic isometries on the free factor graph \cite{ar:BF14}.  An element $\varphi\in\Out(F)$ is called \emph{atoroidal}
if no power of $\varphi$ fixes the conjugacy class of a non-trivial element in $F$.    Brinkmann proved that when $H =\I{\varphi}$ is infinite cyclic, the extension $E_{H}$ is hyperbolic precisely when $\varphi$ is atoroidal~\cite{ar:Brink}. 

Dowdall--Taylor were the first to produce a sufficient dynamical criterion on finitely generated subgroups $H \subseteq \Out(F)$ that ensures hyperbolicity of $E_H$.  Specifically, they prove that if $H$ is finitely generated and the orbit map $H \to \coS(F)$ into the co-surface graph is a quasi-isometric embedding, then $E_H$ is hyperbolic~\cite{ar:DT17}.  The co-surface graph (an electrification of the free factor graph) is hyperbolic and the loxodromic isometries of  $\coS(F)$ are precisely the fully irreducible and atoroidal elements~\cite{ar:DT17}.  Hence the existence of a quasi-isometric embedding also implies that $H$ is itself hyperbolic, and every infinite order element is both fully irreducible and atoroidal. The converse of the theorem by Dowdall--Taylor does not hold, even for infinite cyclic subgroups.  In essence, the property of being atoroidal is a mixing condition on the conjugacy classes of elements of $F$ whereas the property of being fully irreducible is a mixing condition on conjugacy classes of free factors of $F$.  There exist atoroidal but not fully irreducible $\varphi\in\Out(F)$ in all ranks at least 4, and for such a subgroup $\langle \varphi\rangle$, $E_{\I{\varphi}}$ is hyperbolic but the orbit map $\I{\varphi} \to \coS(F)$ has bounded image.    

Hence, to produce a more robust condition for hyperbolicity of free group extensions, one must keep track of the invariant free factors and focus on the dynamics relative to these subgroups.  We make a first step towards this in this paper.  Since we are only concerned with the dynamics relative to a collection of subgroups and not any dynamics within them, we do not necessarily only focus on free groups and can work in a more general setting.  To this end, in this work we consider free products of the form
\begin{equation*}
G = A_1 \ast \cdots \ast A_k \ast F_N
\end{equation*}
where $F_N$ is a free group of rank $N$.  Non-trivial elements or subgroups of $G$ that are conjugate into one of the $A_i$'s are called \emph{peripheral}.  The set of $G$--conjugacy classes of the factors $A_i$ is denoted by $\calA$.  

Associated to the pair $(G,\calA)$ are two spaces of trees that are the focus of this paper.  One of these is the \emph{relative outer space} $\calO = \calO(G,\calA)$, also known as a \emph{deformation space}.  This space parametrizes actions of $G$ on metric simplicial trees where each of the $A_i$ fixes a unique vertex, the vertex stabilizers are either trivial or conjugate to one of the $A_i$, and the stabilizer of an edge is trivial.  Such trees are called \emph{Grushko trees}.  In the case when $\calA = \emptyset$, so that $G$ is free, the space $\calO$ is the well-known Culler--Vogtmann Outer space~\cite{ar:CV86}.  The relative outer space $\calO$ plays the role of the Teichm\"uller space for a closed surface $\Sigma$.  The other space of interest for this paper is the \emph{$\calZ$--factor graph} $\ZF = \ZF(G,\calA)$.  In the case where $\calA= \emptyset$, this graph is quasi-isometric to the co-surface graph $\coS(F)$ mentioned above that was defined by Dowdall--Taylor~\cite[Proposition~4.4]{ar:DT17}.  Vertices in $\ZF$ are actions on simplicial trees where each of the $A_i$ fixes a unique vertex and the stabilizer of an edge is either trivial, or cyclic and non-peripheral.  Such trees are called \emph{$\calZ$--splittings}.  Two vertices $T_0$ and $T_1$ in $\ZF$ are connected by an edge, if either $T_1$ and $T_2$ are \emph{compatible}, or there is a non-peripheral element $g\in G$ that is elliptic in both $T_0$ and $T_1$. The $\calZ$--factor graph is one of a handful of hyperbolic graphs for free products that play the role of the curve complex for a finite-type surface.  For a detailed description and exposition of the connections between many of these, see the work of Guirardel--Horbez~\cite[Section ~2]{ar:GH22}.

The exact definitions of the relative outer space $\calO$ and the $\calZ$--factor graph $\ZF$ appear in Section~\ref{sec:graphs}.  For now, we observe that if one forgets the metric on a Grushko tree $T \in \calO$, the resulting simplicial tree is a $\calZ$--splitting and hence a vertex in $\ZF$.  Thus we have a well-defined projection map $\pi \from \calO \to \ZF$.  This projection map plays the role in the context of surfaces of the systole map $\Sys \from \Teich(\Sigma) \to \calC(\Sigma)$, which assigns to a marked hyperbolic surface $X \in \Teich(\Sigma)$ one of its shortest curves.

Loosely speaking, the main result of this paper states that the projection of the region of $\calO$ where a given non-peripheral element has bounded length has bounded diameter in $\ZF$, where the diameter bound depends only on the length bound.  To state it precisely, we require the following notation.  If $g \in G$ is non-peripheral and $T \in \calO$ is a Grushko tree, then $g$ has an invariant line in $T$ called its \emph{axis}, denoted $T_g$.  By $\abs{g}_T$ we denote the number of edges in a fundamental domain for the action of $g$ on $T_g$.  We call this quantity the \emph{combinatorial length} of $g$.  For a real number $L > 0$, we set $\calO_L(g)  = \{ T \in \calO \mid \abs{g}_T \leq L \}$.  

We can now state our main theorem.

\begin{theorem}\label{thm:bounded projections}
Let $(G,\calA)$ be a non-sporadic torsion-free free product.  For all $L > 0$, there is a $D > 0$ such that for any non-peripheral element $g \in G$, the diameter of $\pi(\calO_L(g)) \subset \ZF$ is at most $D$.
\end{theorem}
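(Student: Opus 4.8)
The plan is to reduce Theorem~\ref{thm:bounded projections} to a uniform comparison with a single, canonically defined vertex of $\ZF$ attached to $g$, and then to control that comparison through relative hyperbolicity. First I would fix a non-peripheral $g \in G$ and form the enlarged peripheral structure $\calP = \calA \cup \{[\I{g}]\}$. Because $g$ is non-peripheral, the cyclic group $\I{g}$ together with the factors $A_i$ forms an almost malnormal collection, so $(G,\calP)$ is relatively hyperbolic; its Bowditch boundary $\partial(G,\calP)$ is a topological invariant of the pair and, crucially, is built without reference to any tree in $\calO$. The subgroup $\I{g}$ contributes a single parabolic point $p_g \in \partial(G,\calP)$, and the complement records the way $G$ ``sees'' $g$ from infinity.

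Next I would translate the length bound on $g$ into a boundary statement. For a Grushko tree $T$, forgetting the metric gives the $\calZ$--splitting $\pi(T)$, and the two endpoints of the axis $T_g$ collapse to the parabolic point $p_g$. The hypothesis $\abs{g}_T \le L$ says the action of $\I{g}$ on its axis has combinatorial length at most $L$, which bounds the complexity of how the orbit of $T_g$ is arranged in $T$. I would package this as follows: the splitting $\pi(T)$ induces a $G$--equivariant partition of $\partial(G,\calP)$, and when $g$ is $L$--short this partition agrees with the canonical parabolic structure of $\partial(G,\calP)$ up to an amount controlled only by $L$. Concretely, I expect to extract from $\partial(G,\calP)$ a single $\calZ$--splitting $U_g$ in which $\I{g}$ is elliptic (or, after passing to a power, an edge group), and to show that $\pi(T)$ and $U_g$ have coarse intersection number, measured by the number of edge orbits in their Guirardel core, at most some $F(L)$.

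The conclusion then follows from the standard coarse Lipschitz estimate bounding $\ZF$--distance by a function of the intersection number: a coarse intersection number at most $F(L)$ forces $d_{\ZF}(\pi(T), U_g) \le D'(L)$, with $D'$ depending only on $L$. Since $U_g$ does not depend on $T$, every point of $\pi(\calO_L(g))$ lies within $D'(L)$ of the fixed vertex $U_g$, so the diameter satisfies $\operatorname{diam}_{\ZF}\bigl(\pi(\calO_L(g))\bigr) \le 2D'(L) =: D$, which depends only on $L$ and not on $g$. Note that this route produces a common coarse center and so does not even require the hyperbolicity of $\ZF$ for the final estimate.

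The main obstacle is the uniformity in $g$: for a single fixed $g$ the individual ingredients are comparatively soft, but the constant $D$ must be extracted from an analysis of $\partial(G,\calP)$ carried out simultaneously over all non-peripheral $g$, converting the purely metric hypothesis $\abs{g}_T \le L$ into a tree-independent bound. Establishing almost malnormality of $\calA \cup \{\I{g}\}$, and, more seriously, constructing a controlled $G$--equivariant comparison between the tree boundary $\partial T$ and the Bowditch boundary $\partial(G,\calP)$ whose distortion is governed only by $L$, is where the real work lies. This is precisely the step that makes the boundary analysis indispensable and is, I expect, the technical heart of the argument.
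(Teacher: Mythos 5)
There is a genuine gap, and it sits at the very center of your plan: the vertex $U_g$ need not exist. Theorem~\ref{thm:bounded projections} quantifies over \emph{all} non-peripheral $g$, including elements that are not $\calZ$--simple (the analog of a filling curve), and for such $g$ there is no $\calZ$--splitting whatsoever in which $g$ is elliptic. Adding $[\I{g}]$ to the peripheral structure does give a relatively hyperbolic pair, and its Bowditch boundary is the decomposition space $\calD(\calL_g)$ of the paper (Lemma~\ref{lem:boundary}); but extracting a splitting from that boundary requires cut points or inseparable loxodromic cut pairs, and these exist precisely when $g$ is $\calZ$--simple (Lemma~\ref{lem:cut set}, Corollary~\ref{co:inseparable}). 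When $g$ is ``filling,'' $\calD(\calL_g)$ is connected with no cut structure, your coarse center evaporates, and the argument has nothing to converge on. The paper handles exactly this case not by a common-center argument but by contradiction: given sequences $(g_n)$, $(S_n)$, $(T_n)$ violating uniform boundedness, it normalizes $S_n = S$, passes to a limiting $\RR$--tree in $\overline{\calO}$, manufactures a single non-peripheral $g$ with $\|g\|_{T_n}$ bounded, and invokes Guirardel--Levitt to conclude that $g$ is $\calZ$--simple, contradicting Proposition~\ref{prop:length bounded elliptic}. Relatedly, your closing paragraph mislocates the uniformity problem: for $\calZ$--simple elements the paper's constants (Propositions~\ref{prop:length bounded simple}, \ref{prop:length bounded quadratic}, \ref{prop:length bounded elliptic}) already depend only on $L$ and not on $g$; the genuinely non-soft step is the non-$\calZ$--simple case, which your outline never addresses.

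Even restricted to $\calZ$--simple $g$, where your common-center strategy does match the paper's in spirit, the quantitative bridge you invoke is not available in the form you assume. There is no established ``standard coarse Lipschitz estimate'' bounding $d_{\ZF}(\pi(T),U_g)$ by a coarse intersection number of Guirardel cores, and the hypothesis $\abs{g}_T \leq L$ does not obviously bound such an intersection number uniformly. The paper's route is different and more hands-on: it performs at most $L$ Whitehead moves, each moving distance at most $1$ in $\ZF$, to reach a Whitehead-reduced tree (Lemma~\ref{lem:whitehead reduced}, Corollary~\ref{co:whitehead reduced}); for $g$ simple this already yields a nearby free splitting (Lemma~\ref{lem:simple}); for $g$ quadratic the circle case of the boundary is handled separately via the Otal-type characterization (Proposition~\ref{prop:quadratic}); and in the remaining case it produces, from an inseparable loxodromic cut pair, a \emph{simple} element $a$ with $\abs{a}_T \leq R(L)$ fixing an edge of a $\calZ$--splitting where $g$ is elliptic (Propositions~\ref{prop:small cut pairs} and \ref{prop:splitting bound}), then routes the distance bound through the simple case applied to $a$ rather than through any intersection pairing. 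So your proposal would need both a substitute for $U_g$ when no splitting exists and an actual proof of the intersection-to-distance estimate before the $\calZ$--simple half could be considered complete.
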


The notion of non-sporadic appears in Section~\ref{sec:graphs}.  Theorem~\ref{thm:bounded projections} is new even in the case where $\calA = \emptyset$ and thus $G$ is free. As alluded to earlier, Theorem~\ref{thm:bounded projections} will be applied in a subsequent paper to provide a new sufficient condition for the hyperbolicity of the extension $E_H$ generalizing the one due to Dowdall--Taylor.  Essentially, Theorem~\ref{thm:bounded projections} implies that if the orbit map along a geodesic in the subgroup $H$ to $\ZF$ makes definite progress, then no element of $F$ can stay short for long while moving along the corresponding path.  This type of statement will be promoted to an ``annuli flaring'' condition ensuring hyperbolicity of $E_H$.

For the remainder of this introduction, we will give an outline of the proof of Theorem~\ref{thm:bounded projections} and of the rest of the paper.  In broad strokes, our proof is analogous to how one might prove the similar statement in the setting of surfaces concerning the systole map $\Sys \from \Teich(\Sigma) \to \calC(\Sigma)$.  We will state this theorem for surfaces (which is known but does not appear in the literature), give a sketch of a proof, and comment on its relation to the proof of Theorem~\ref{thm:bounded projections}.

\begin{theorem}\label{thm:bounded surface projections}
Let $\Sigma$ be a finite-type surface with negative Euler characteristic.  For all $L > 0$, there is a $D > 0$ such that for any curve $\gamma$ on $\Sigma$, the diameter of $\Sys(\Teich_L(\gamma)) \subset \calC(\Sigma)$ is at most $D$.
\end{theorem}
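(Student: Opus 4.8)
The plan is to run the classical surface argument, using $\gamma$ itself as a coarse center: I will show that \emph{every} systole occurring on $\Teich_L(\gamma)$ lies uniformly close to $\gamma$ in $\calC(\Sigma)$, so that the diameter bound follows from the triangle inequality. The two standard inputs are the collar lemma and the coarse comparison between distance in the curve complex and geometric intersection number.

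First I would observe that if $X \in \Teich_L(\gamma)$ and $\sigma$ is any systole of $X$, then by definition of the systole $\ell_X(\sigma) \le \ell_X(\gamma) \le L$, so $\gamma$ and $\sigma$ are simultaneously short on $X$. The collar lemma then furnishes an embedded collar about the geodesic representative of $\gamma$ of half-width $w(\ell_X(\gamma))$, where $w(\ell) = \operatorname{arcsinh}\bigl(1/\sinh(\ell/2)\bigr)$ is a decreasing function; since $\ell_X(\gamma) \le L$ we have $w(\ell_X(\gamma)) \ge w(L) > 0$. The geodesic representative of $\sigma$ must traverse this collar, contributing length at least $2w(\ell_X(\gamma))$ for each of its intersections with $\gamma$, so
\[
L \ge \ell_X(\sigma) \ge 2\,w\bigl(\ell_X(\gamma)\bigr)\cdot i(\sigma,\gamma) \ge 2\,w(L)\cdot i(\sigma,\gamma),
\]
giving $i(\sigma,\gamma) \le L/\bigl(2w(L)\bigr)$, a bound depending only on $L$.

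Next I would invoke the classical estimate (due to Hempel, following Lickorish) that $d_{\calC}(\alpha,\beta) \le 2\,i(\alpha,\beta)+1$ whenever $i(\alpha,\beta) \ge 1$, together with $d_{\calC}(\alpha,\beta) \le 1$ when $\alpha,\beta$ are disjoint. Combined with the intersection bound above, this yields $d_{\calC}(\sigma,\gamma) \le L/w(L) + 1 =: R$, still depending only on $L$. Finally, given $X, Y \in \Teich_L(\gamma)$ with respective systoles $\sigma_X$ and $\sigma_Y$, the triangle inequality gives $d_{\calC}(\sigma_X,\sigma_Y) \le d_{\calC}(\sigma_X,\gamma) + d_{\calC}(\gamma,\sigma_Y) \le 2R$, so one may take $D = 2R$.

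I do not anticipate a genuine obstacle in the surface case, as both ingredients are standard; the only points needing care are the coarse nature of $\Sys$ (its image over the region is the set of \emph{all} shortest curves, whose diameter is precisely what is bounded) and the low-complexity surfaces, where $\calC(\Sigma)$ is built from minimally intersecting curves and the distance--intersection comparison takes a slightly different but still linear form. The real value of this sketch is that it isolates the two structural facts one must replace in the free product setting, and which the later sections are designed to supply: a collar-type lower bound forcing a short non-peripheral $g$ and a short $\calZ$--splitting to have controlled overlap, and a bound on distance in $\ZF$ in terms of that overlap.
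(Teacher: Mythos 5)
Your argument proves the theorem only for \emph{simple} $\gamma$, but the statement quantifies over all closed curves on $\Sigma$ --- indeed the paper's sketch splits into three cases (simple, non-filling, filling) precisely because $\gamma$ need not be embedded, and the filling case is where the real content lies. Your proposal silently assumes simplicity in two places. First, the collar lemma produces an embedded collar of half-width $w(\ell)$ only around a \emph{simple} closed geodesic; a geodesic with self-intersections has no embedded collar and the width formula you quote fails. This step is repairable: put the collar around the systole $\sigma$ (systoles are always simple) and let the geodesic representative of $\gamma$ traverse it, which yields the same bound $i(\sigma,\gamma) \leq L/\bigl(2w(L)\bigr)$ for arbitrary $\gamma$. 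Second, and not repairable by a local fix: a non-simple $\gamma$ is not a vertex of $\calC(\Sigma)$, so $d_{\calC}(\sigma,\gamma)$ is undefined and $\gamma$ cannot serve as your coarse center. When $\gamma$ is filling there is no simple closed curve canonically close to it, and a uniform bound on $i(\sigma,\gamma)$ does not, without further argument, bound $d_{\calC}(\sigma_X,\sigma_Y)$ for two systoles $\sigma_X,\sigma_Y$.

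For simple $\gamma$ your route is exactly the paper's first case (its ``basic hyperbolic geometry'' constant $C$ is your collar estimate, followed by the intersection-number bound on curve-complex distance and the triangle inequality through $\gamma$). What your sketch is missing are the paper's two remaining cases. If $\gamma$ is non-simple but non-filling, let $\alpha$ be a boundary curve of the subsurface $\Sigma_\gamma$ filled by $\gamma$; then $\ell_X(\alpha) \leq \ell_X(\gamma)$ for every $X$, so $\Teich_L(\gamma) \subseteq \Teich_L(\alpha)$ and the simple case applies with center $\alpha$. If $\gamma$ is filling, the paper argues instead that $\Teich_L(\gamma)$ has bounded diameter in the Teichm\"uller metric, that $\Sys \from \Teich(\Sigma) \to \calC(\Sigma)$ is coarsely Lipschitz, and that uniformity of the resulting bound in $\gamma$ follows because, up to homeomorphism of $\Sigma$, only finitely many curves have length at most $L$ on some hyperbolic surface. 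Since your closing remarks identify the ``collar-type bound plus distance-versus-overlap bound'' as the structure to transport to the free product setting, note that the paper's actual division --- simple, $\calZ$--simple/quadratic, and the compactness argument replacing the filling case --- is the decomposition that gets transported, and your sketch reproduces only the first piece.
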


There are three cases corresponding to the topological type of the curve $\gamma \subset \Sigma$: simple, non-filling, and filling.  We will deal with these one at a time in parallel with the free product setting.

\medskip

\noindent {\it Simple:} First, suppose that $\gamma$ is simple.  It follows from basic hyperbolic geometry that there is a constant $C$ such that the geometric intersection number between $\gamma$ and $\Sys(X)$ is at most $CL$ when $X \in \Teich_L(\gamma)$.  As the distance between simple closed curves in the curve complex is bounded above by their geometric intersection number, for $X \in \Teich_L(\gamma)$, we have that the distance in the curve complex between $\gamma$ and $\Sys(X)$ is at most $CL$.  Using the curve $\gamma$ as a central point to measure distances, it now follows that the diameter of $\Sys(\Teich_L(\gamma))$ is at most $2CL$.

In the setting of free products, we also consider simple elements first.  In this context, a non-peripheral element $g \in G$ is called \emph{simple} if it is elliptic in a $\calZ$--splitting where all edge stabilizers are trivial.  Such a $\calZ$--splitting is called a \emph{free splitting}.  There is a non-empty subset of $\ZF$ with diameter equal to one consisting of all of the $\calZ$--splittings in which $g$ is elliptic.  This subset plays the role of $\gamma$ as a central point to which we will measure distance from for a tree in $\pi(\calO_L(g))$.  The main tool we exploit here is the ubiquitous notion of a \emph{Whitehead graph}.  Using an appropriate notion of a Whitehead graph for free-products due to Guirardel--Horbez (Section~\ref{sec:whitehead}) and its properties for simple elements, it will follow that if $T \in \calO_L(g)$, then there is a free splitting $S \in \ZF$ for which $g$ is elliptic and where $d(\pi(T),S) \leq L$.  From this, we conclude that the diameter of $\pi(\calO_L(g))$ is at most $2L + 1$ (Proposition~\ref{prop:length bounded simple}).

\medskip

\noindent {\it Non-filling:} Next, suppose that $\gamma$ is contained in a proper subsurface $\Sigma_\gamma \subset \Sigma$ and let $\alpha$ be one of the boundary curves of $\Sigma_\gamma$.  Then we see that $\abs{\alpha}_X \leq \abs{\gamma}_X$ for any $X \in \Teich(\Sigma)$ and hence $\Teich_L(\gamma) \subseteq \Teich_L(\alpha)$ for any $L$.  As $\alpha$ is simple, we have that the diameter of $\Sys(\Teich_L(\gamma))$ is bounded in terms of $L$ by the first case.

In the setting of free products, the analogous elements are called \emph{$\calZ$--simple}.  These are the elements $g \in G$ that are elliptic in some $\calZ$--splitting.  There are subcases here depending on whether or not the element $g$ is \emph{quadratic}, that is, if there is a geometric model for $(G,\calA)$ in which the conjugacy class of $g$ corresponds to a boundary component (Definition~\ref{def:quad}).  In both of these subcases, the key point is to try to find a $\calZ$--splitting $S$ where $g$ is elliptic and where the length of an edge stabilizer in $S$ is bounded by some function of the length of $g$.  As edge stabilizers in $\calZ$--splittings are simple, we can use the first case to get a bound on $\pi(\calO_L(g))$ much like in the surface case.
  
However, the details in the two subcases are very different.  In the quadratic case, we use the surface from the definition directly to find this $\calZ$--splitting (Proposition~\ref{prop:length bounded quadratic}).  To this end, we provide a novel characterization of quadratic elements in terms of Whitehead graphs (Proposition~\ref{prop:quadratic}) which generalizes the characterization due to Otal in the case when $\calA$ is empty~\cite[Theorem~2]{ar:Otal92} (cf.~\cite[Theorem~6.1]{ar:CM11}).  

In the non-quadratic case, we study a quotient of the boundary of $G$ as a relatively hyperbolic group with respect to $\calA$ where we identify pairs of points parameterized by the conjugates of $g$.  Such a space is called a \emph{decomposition space}.  This space is in fact the boundary of $G$ as a hyperbolic group relative to a new collection of subgroups that includes $\calA$.  The study of the decomposition space employs techniques from and adds to the existing literature regarding using the boundary of a hyperbolic or a relatively hyperbolic group to understand $\calZ$--splittings of the group as developed by Bowditch~\cite{ar:Bowditch98}, Cashen--Macura~\cite{ar:CM11}, Cashen~\cite{ar:Cashen16}, Haulmark~\cite{ar:Haulmark19}, and Haulmark--Hruska~\cite{ar:HH}.  This analysis is carried out in Section~\ref{sec:short} and the proof of Theorem~\ref{thm:bounded projections} for general $\calZ$--simple elements appears as Proposition~\ref{prop:length bounded elliptic}.  Prior to these sections, we must develop a finite model for working with decomposition spaces.  This includes a generalization of the Guirardel--Horbez notion of a Whitehead graph that is not focused on a single vertex of a Grushko tree, but takes into account an entire locally finite subtree of a Grushko tree.  This construction takes place in Section~\ref{sec:model} and forms the basis for the analysis in Sections~\ref{sec:quadratic elements} and~\ref{sec:short}.

\medskip

\noindent {\it Filling:} Finally, suppose that $\gamma$ is a filling curve.  Then the diameter of $\Teich_L(\gamma)$ with the Teichm\"uller metric is bounded as a subset of $\Teich(X)$.  Since the systole map $\Sys \from \Teich(\Sigma) \to \calC(\Sigma)$ is coarsely Lipschitz, it follows that $\Sys(\Teich_L(\gamma))$ has bounded diameter.  Up to homeomorphism of $\Sigma$, there are only finitely many curves on $\Sigma$ that have length at most $L$ on some hyperbolic surface $X$.  Thus we can obtain a bound on the diameter of $\Sys(\Teich_L(\gamma))$ that depends only on $L$ and not on $\gamma$.   

In the setting of free products, we take a related, but different approach.  Partly this is for efficiency, but partly this is also due to necessity.  Indeed, it is not true that up to the action of outer automorphisms of $G$ that preserve $\calA$ that there are only finitely many conjugacy classes of non-peripheral elements in $G$ that have combinatorial length at most $L$ in some Grushko tree $T$.  Thus proving boundedness of $\pi(\calO_L(g))$ for a given non-peripheral element $g$ is not sufficient.  To this end, we use contradiction and assume there is a sequence of elements $(g_n)$ and sequences of Grushko trees $(S_n)$ and $(T_n)$ where $\abs{g_n}_{S_n},\abs{g_n}_{T_n}  \leq L$ but yet the distance between $\pi(S_n)$ and $\pi(T_n)$ is unbounded.  We show that with this set-up we can find a single non-peripheral element $g \in G$ where $\abs{g}_{S_n}$ and $\abs{g}_{T_n}$ are both bounded.  This enables us to find a tree appearing in the closure of the relative outer space $\calO$ in which $g$ is elliptic.  As a consequence of the Rips machine, it follows that such an element $g$ is necessarily $\calZ$--simple.  This is now a contradiction to the above case.  This argument appears in Section~\ref{sec:main proof}.    

\addtocontents{toc}{\SkipTocEntry}
\subsection*{Acknowledgements}
First, we thank Derrick Wigglesworth who started this project with us and with whom we had many useful and deep conversations.  We would also like to thank Chris Cashen, Matthew Haulmark, Camille Horbez, Chris Hruska, and Yo'av Rieck for answering questions related to their work and for discussions pertaining to this work.  We also thank the referees for their careful readings and feedback.


\section{Relative outer space and relative factor graphs}\label{sec:graphs}

In this section, we introduce the basic setting and notation necessary for Theorem~\ref{thm:bounded projections} and its proof.  

Let $A_1,\ldots,A_k$ be countably infinite torsion-free groups and let $G$ denote the free product
\begin{equation*}
G = A_1 \ast \cdots \ast A_k \ast F_N
\end{equation*}
where $F_N$ is a free group of rank $N$.  Any subgroup of $G$ conjugate into one of $A_1,\ldots,A_k$ is called a \emph{peripheral subgroup} and any element of $G$ contained in a peripheral subgroup is called \emph{peripheral} as well.  The collection $\calA = \{[A_1],\ldots,[A_k]\}$ of conjugacy classes is called the \emph{peripheral factor system of $G$}.  

The \emph{complexity of $(G,\calA)$} is defined as the quantity $\xi(G,\calA) = 2k + 3N - 3$.  Throughout this paper we will always assume that $\xi(G,\calA) \geq 3$.  This excludes the following five cases which are called \emph{sporadic}:
\begin{itemize}
\item $\xi(G,\calA) = -3$: $G = \{1\}$ and $\calA = \emptyset$

\item $\xi(G,\calA) = -1$: $G = A_1$ and $\calA = \{[A_1]\}$

\item $\xi(G,\calA) = 0$: $G = \ZZ$ and $\calA = \emptyset$

\item $\xi(G,\calA) = 1$: $G = A_1 \ast A_2$ and $\calA = \{[A_1],[A_2]\}$

\item $\xi(G,\calA) = 2$: $G = A_1 \ast \ZZ$ and $\calA = \{[A_1]\}$
\end{itemize}

The subgroup of $\Out(G)$ consisting of outer automorphisms $\varphi$ such that $\varphi([A_i]) = [A_i]$ for each $i = 1,\ldots,k$ is denoted $\Out(G,\calA)$.

A \emph{$(G,\calA)$--tree} is a metric simplicial tree $T$ equipped with a minimal cocompact action of $G$ by isometries without inversions such that each peripheral subgroup of $G$ fixes a point in $T$.  Two $(G,\calA)$--trees are considered equivalent if there is a $G$--equivariant isometry between them.  We will (almost) always assume that if some vertex of a $(G,\calA)$--tree has degree two, then there is an element in $G$ that fixes this vertex and interchanges the two incident edges.   

For a point $p \in T$ in a $(G,\calA)$--tree, a \emph{direction at $p$} is a connected component of $T - \{p\}$.  We will also refer to a connected component $Y \subset T - X$ as a \emph{direction at $X$} where $X \subset T$ is a connected closed subset.  There is a unique point $\bd_0Y \in X$ such that $Y$ is a direction at $\bd_0Y$.

Given $(G,\calA)$--trees $T$ and $T'$, a \emph{collapse} is a continuous $G$--equivariant function $f \from T \to T'$ such that the pre-image of any point in $T'$ is a connected subset of $T$.  Two $(G,\calA)$--trees $T_0$ and $T_1$ are \emph{compatible} if there exists a $(G,\calA)$--tree $T$ and a pair of collapse maps $f_0 \from T \to T_0$ and $f_1 \from T \to T_1$.  

There is a right action of $\Out(G,\calA)$ on the set of equivalence classes of $(G,\calA)$--trees.  Namely, given $\varphi \in \Out(G,\calA)$ one first fixes an automorphism $\Phi \in \Aut(G)$ that represents $\varphi$.  Then given a $(G,\calA)$--tree $T$ with action homomorphism $\rho \from G \to \Isom(T)$, we define a new action on $T$ by the homomorphism $\rho_\Phi = \rho \circ \Phi$.  It is straightforward to check that this defines an action satisfying the requirements of a $(G,\calA)$--tree and that the equivalence class of the new $(G,\calA)$--tree is independent of the choices made.  We denote this new $(G,\calA)$--tree by $T\varphi$.

A $(G,\calA)$--tree $T$ is a called a \emph{Grushko tree} if the following conditions hold:
\begin{enumerate}
\item every elliptic element for $T$ is peripheral, and 
\item the stabilizer of any edge in $T$ is trivial.
\end{enumerate}
We observe that this implies that each peripheral subgroup fixes a unique vertex of $T$ and that if $v$ is a vertex of $T$ whose stabilizer, $\Stab_T(v)$ is non-trivial, then $\Stab_T(v)$ is conjugate to one of the $A_i$.   

If $T$ is a Grushko tree, then the number of edges in $T/G$ is at most the complexity $\xi(G,\calA)$, when $(G,\calA)$ is non-sporadic.  Indeed, the maximum occurs precisely when the vertices with non-trivial stabilizer have degree one and each other vertex has degree three.

\begin{definition}\label{def:relative outer space}
The set of equivalence classes of Grushko trees for $(G,\calA)$ is called the \emph{relative outer space} and is denoted $\calO = \calO(G,\calA)$.
\end{definition} 
There is a natural way to equip this space with a topology where the $\Out(G,\calA)$--action is by homeomorphisms.  See the work of Guirardel--Levitt for more details~\cite{ar:GL07}.  

If $T \in \calO$ is a Grushko tree for $(G,\calA)$ and $g \in G$ is non-peripheral, it acts on $T$ as a hyperbolic isometry and by $T_g$ we denote its axis.  Further, by $\abs{g}_T$ we denote the number of edges in a fundamental domain for $g$ on $T_g$.  In other words, $\abs{g}_T$ is the number of edges in $T_g/\I{g}$. For $g \in G$ non-peripheral and $L > 0$, we denote:
\begin{equation*}
\calO_L(g) = \{ T \in \calO \mid \abs{g}_T \leq L\}.
\end{equation*}
We note that this set could possibly be empty.

A \emph{$\calZ$--splitting} of $(G,\calA)$ is a $(G,\calA)$--tree where every edge has length one and the stabilizer of any given edge is either trivial or cyclic and non-peripheral.  A non-peripheral element $g \in G$ is \emph{$\calZ$--simple} if there exists a $\calZ$--splitting of $(G,\calA)$ where $g$ is elliptic.  

A $\calZ$--splitting where the stabilizer of any given edge is trivial is called a \emph{free-splitting} of $(G,\calA)$.   Notice that a Grushko tree where every edge has length one is a free-splitting.  A non-peripheral element $g \in G$ is \emph{simple} if there is a free--splitting of $(G,\calA)$ where $g$ is elliptic. 

We note that every simple element is also $\calZ$--simple.  Here is one way to construct $\calZ$--simple elements that are not simple.  Consider a free factorization of a finitely generated free group $F = A \ast B$ where $A$ has rank at least two.  Fix an element $a \in A$ that is not simple in $A$.  Let $S$ be the Bass--Serre tree of the amalgamated free product decomposition $A \ast_{\I{a}} \I{a,B}$; this is a $\calZ$--splitting of $F$.  Any element in $\I{a,B}$ that is not simple in $\I{a,B}$ is an example of a $\calZ$--simple element of $F$ as it is elliptic in $S$ but is not simple in $F$, as can be seen using Whitehead graphs.

The following lemma of Horbez regarding edge stabilizers in $\calZ$--splittings shows that the above construction is essentially the only way.

\begin{lemma}[{\cite[Lemma~6.11]{ar:Horbez17}}]\label{lem:edge stabs in Z-splittings}
If $g$ fixes an edge in a $\calZ$--splitting, then $g$ is simple.
\end{lemma}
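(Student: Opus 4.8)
The plan is to reduce to a one-edge splitting and then \emph{unfold} the cyclic edge group into a genuine free splitting.

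First I would record two reductions. Since $g \neq 1$ fixes the edge $e$, and edge stabilizers in a $\calZ$--splitting are either trivial or non-peripheral cyclic, the element $g$ is non-peripheral and $\Stab_S(e) = \I{h}$ for some non-peripheral $h$, with $g = h^m$ for some $m$. As every power of an element that is elliptic in a free splitting is again elliptic there, it suffices to prove that $h$ is simple. Next, collapsing every edge of $S$ outside the $G$--orbit of $e$ yields a one-edge $\calZ$--splitting; distinct edges in the orbit of $e$ remain distinct after the collapse, so the stabilizer of the surviving edge is still exactly $\I{h}$. Thus I may assume that $G$ is either an amalgam $G = P *_{\I{h}} Q$ or an HNN extension $G = \I{P, t \mid t\,\alpha(h)\,t\inv = \beta(h)}$ over $\I{h}$, where each peripheral $A_i$ is conjugate into a vertex group.

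The heart of the matter is to show that the cyclic edge group $\I{h}$ is a \emph{relative} free factor of an adjacent vertex group $V$, in the sense that $V = \I{h} * V'$ with every peripheral subgroup of $V$ contained in $V'$ (the latter being automatic, as $\I{h}$ is non-peripheral). This is the free-product analogue of the classical Shenitzer--Swarup theorem, which asserts that a free group splitting over an infinite cyclic subgroup $C$ has $C$ as a free factor of a vertex group. I expect this to be the main obstacle, and it is the one genuinely nontrivial input: one must either prove the relative statement directly---via Whitehead-graph and Stallings-fold reasoning adapted to $(G,\calA)$, taking care that folds out of a Grushko tree \emph{can} create non-peripheral cyclic edge groups, so the conclusion is not formal---or else extract it from the Rips-machine structure theory for cyclic splittings of free products.

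Granting this, the proof ends with a short algebraic unfolding. In the amalgam case, writing the relevant factor as $Q = \I{h} * Q'$ gives
\[ G = P *_{\I{h}} (\I{h} * Q') = P * Q', \]
and because the decomposition of $Q$ is relative, every peripheral $A_i$ it contains lies in $Q'$; hence $G = P * Q'$ respects $\calA$, and it is nondegenerate since minimality of the splitting forces $Q' \neq 1$. As $h \in \I{h} \subseteq P$, the element $h$ is elliptic in this free splitting, so $h$ is simple. The HNN case is parallel: if $\alpha$ realizes $P = \I{h} * P'$, then substituting $h = t\inv \beta(h)\, t$ rewrites $G$ as $P' * \I{t}$, in which $h$ is conjugate into $P'$ and hence elliptic. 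In both cases $h$, and therefore $g = h^m$, is simple.
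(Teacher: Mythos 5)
First, a point of comparison: the paper contains no proof of this lemma to measure you against. It is imported wholesale from Horbez \cite[Lemma~6.11]{ar:Horbez17}, with the remark that for $\calA = \emptyset$ it is the classical Shenitzer--Swarup theorem. Your proposal follows exactly that classical template (collapse to a one-edge splitting, show the cyclic edge group is a relative free factor of a vertex group, unfold), and the routine parts are handled correctly: the reduction to a root $h$ of $g$, the collapse preserving the edge stabilizer, and the amalgam unfolding $G = P *_{\I{h}} (\I{h} * Q') = P * Q'$ are all fine. But the relative Shenitzer--Swarup statement you isolate \emph{is} the content of the cited lemma, and you do not prove it --- you only name two methods that might. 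So as it stands the proposal is an accurate reduction of the lemma to its own essential core, not a proof.

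Moreover, even granting your key claim exactly as you state it, the HNN case fails. The substitution $h = t\inv \beta(h)\, t$ is a valid Tietze elimination only when $\beta(h)$ does not involve the letter being eliminated; in general $\beta(h)$ is a word in $\alpha(h)$ and $P'$, and the elimination is circular. Concretely, take $\calA = \emptyset$ and $G = \I{a,p,t \mid tat\inv = ap}$, a one-edge $\calZ$--splitting over $\I{a}$ of the vertex group $P = \I{a} * \I{p}$, so $\alpha(h) = a$ is a free factor with complement $P' = \I{p}$. Eliminating $p = a\inv t a t\inv$ instead shows $G$ is free on $\{a,t\}$ (so the lemma's conclusion holds), yet $\langle p, t \rangle$ is a \emph{proper} subgroup --- its image in $H_1(G) \cong \ZZ^2$ is the line spanned by $\bar{t}$, since $\bar{p} = 0$ --- so $G \neq P' * \I{t}$ and your recipe produces the wrong group. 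What the unfolding actually needs is the stronger statement that the decomposition $P = \I{\alpha(h)} * P'$ can be chosen with $\beta(h)$ conjugate into $P'$ (in the example, $P'' = \I{ap}$ works); ellipticity of the second edge injection is part of what must be proven, not a formality. For a similar reason your parenthetical ``the latter being automatic, as $\I{h}$ is non-peripheral'' is wrong: the paper only assumes the $A_i$ are torsion-free, so a peripheral subgroup of $V$ may be freely decomposable and, by the Kurosh subgroup theorem, can sit diagonally in $\I{h} * V'$ without being conjugate into either factor; non-peripherality of $h$ only excludes conjugates into the $\I{h}$ factor. Relativity to the peripheral structure --- and, in the HNN case, to the second attachment --- must be built into the key claim and established, which is precisely where the difficulty that Horbez's lemma resolves resides.
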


When $\calA = \emptyset$ and thus $G$ is a free group, this is a well-known fact originally due to Shenitzer~\cite{ar:Shenitzer55} and Swarup~\cite{ar:Swarup86}.  

\begin{definition}\label{def:zf}
The set of equivalence classes of $\calZ$--splittings of $(G,\calA)$ form the vertex set of the \emph{$\calZ$--factor graph}, denoted $\ZF = \ZF(G,\calA)$.  There is an edge joining the equivalence classes of two $\calZ$--splittings $T_0$ and $T_1$ if either:
\begin{enumerate}
\item $T_0$ and $T_1$ are compatible, or
\item there exists a non-peripheral element $g \in G$ which is elliptic in both $T_0$ and $T_1$.
\end{enumerate}
We let $d$ denote the path metric on the graph $\ZF$.
\end{definition}

We note that for free groups, i.e., when $\calA = \emptyset$, the $\calZ$--factor graph is quasi-isometric to the Kapovich--Lustig intersection graph~\cite{ar:KL09} and also to the Dowdall--Taylor co-surface graph~\cite{ar:DT17}.  See the work of Guirardel--Horbez for these facts and more information about the $\calZ$--factor graph, including a proof of its hyperbolicity and a description of its boundary~\cite{ar:GH22}.  

There is a natural $\Out(G,\calA)$--equivariant map $\pi \from \calO \to \ZF$ defined by scaling the metric on each edge to have length one.


\section{Boundaries and lines in free products}\label{sec:boundaries}

As mentioned in the Introduction, central to the proof of Theorem~\ref{thm:bounded projections} is an analysis of the $\calZ$--splittings of a $\calZ$--simple element.  This is carried out by considering the free product $(G,\calA)$ as a relatively hyperbolic group, both with the structure given by the free factor system $\calA$, and with the structure given by including the conjugacy class of the maximal cyclic subgroup containing a given non-peripheral element.  Under these considerations, we relate $\calZ$--splittings of the free product where a given non-peripheral element is elliptic, to cut sets in the corresponding boundary.  This idea was first used to understand $\calZ$--splittings of hyperbolic groups by Bowditch~\cite{ar:Bowditch98}.  It was applied to relatively hyperbolic groups by Haulmark~\cite{ar:Haulmark19} and Haulmark--Hruska~\cite{ar:HH}.  Preceeding the work of Haulmark and Haulmark--Hruska in the general setting of relatively hyperbolic groups, is the work of Cashen--Macura~\cite{ar:CM11} and Cashen~\cite{ar:Cashen16}. These papers apply the above ideas to the case of a finitely generated free group, $F$, building on work of Otal~\cite{ar:Otal92}, to understand the $\calZ$--splittings of $F$ where a given $\calZ$--simple element is elliptic.  By focusing on the setting of a free group, they are able to obtain the types of important quantitative bounds that we need for Theorem~\ref{thm:bounded projections}.  We closely follow their analysis.  For clarity, in places we opt for a direct argument in the setting of free products as opposed to appealing to a general statement regarding all relatively hyperbolic groups.  In a different direction than what we pursue here, Barrett has been able to obtain quantitative bounds similar to those of Cashen--Macura in the setting of hyperbolic groups~\cite{ar:Barrett18}.

\subsection{Boundaries}\label{subsec:boundaries} In this section, we introduce the various notions of boundary used in the sequel and talk about how they are related.  To begin, we discuss the boundary of the pair $(G,\calA)$ as a free product as defined by Guirardel--Horbez~\cite{ar:GH19}.    

Given a Grushko tree $T \in \calO$, we define the following spaces:
\begin{align*}
\bd_\infty T &= \text{the Gromov boundary of $T$, i.e., equivalence classes of rays $r \from [0,\infty) \to T$} \\
\hT &= T \cup \bd_\infty T, \text{ with the usual topology on the union of a hyperbolic space and its boundary} \\
V_\infty(T) &= \text{the set of vertices of $T$ with non-trivial stabilizer} \\
\bd T & = V_\infty(T) \cup \bd_\infty T \subset \hT_{\rm obs}
\end{align*}
In the last line, $\hT_{\rm obs}$ is the set $T \cup \bd_\infty T$ considered with the observers topology.  A basis element for the observers topology consists of a connected component (in the usual sense) of $\hT - P$, where $P \subset \hT$ is a finite set of points.  In other words, a basis consists of intersections of finitely many directions.  In particular, a sequence $(p_n) \subset T$ converges to $p_\infty \in \hT_{\rm obs}$ in the observers topology if for all $q \in \hT_{\rm obs} - \{p_\infty\}$, the direction at $q$ that contains $p_\infty$ also contains $p_n$ for all but finitely many $n$.  With the observers topology, $\hT_{\rm obs}$ is a compact space and $\bd T \subset \hT_{\rm obs}$ is closed, hence also compact.  

Note, a point $v \in V_\infty(T)$ can also be viewed as the set of rays $r \from [0,\Delta] \to T$ where $r(\Delta) = v$.

As described by Guirardel--Horbez, for any two Grushko trees $T,T' \in \calO$, there is a canonical $G$--equivariant homeomorphism $h \from \bd T \to \bd T'$ such that $h(V_\infty(T)) = V_\infty(T')$ and $h(\bd_\infty T) = \bd_\infty T'$~\cite[Lemma~2.2]{ar:GH19}.  Hence the free product $(G,\calA)$ has a well-defined boundary $\bd(G,\calA)$ and this boundary contains well-defined subsets $V_\infty(G,\calA)$ and $\bd_\infty(G,\calA)$.  

Moreover, we have that $\bd(G,\calA)$ is $G$--equivariantly homeomorphic to the Bowditch boundary of $G$ as a hyperbolic group relative to $\calA$.  Indeed, using the language and notation of Bowditch's work, $T$ is a fine hyperbolic graph and the action of $G$ on $T$ satisfies the second definition of relative hyperbolicity with respect to the collection $\calA$~\cite{ar:Bowditch12}.  The induced topology on $\bd T$ as a subset of $\hT_{\rm obs}$ defined above is exactly the topology described by Bowditch on $\Delta T$, once vertices with trivial stabilizers are removed from $\Delta T$.  Such points are isolated.  It is then shown by Bowditch that $\Delta T$ minus the isolated points is $G$--equivariantly homeomorphic to the boundary of $(G,\calA)$ as a relatively hyperbolic group~\cite[Proposition~9.1]{ar:Bowditch12}.  This connection was also observed by Knopf~\cite[Definition~2.2]{ar:Knopf}.

\subsection{Lines}\label{subsec:lines} Recall that for a non-peripheral element $g \in G$ and a Grushko tree $T \in \calO$, the axis of $g$ is denoted $T_g$.  This axis defines two points $T_g^+,T_g^- \in \bd_\infty T$ and consequently two points $g^\infty, g^{-\infty} \in \bd_\infty(G,\calA)$.  These points are characterized by the fact that for any point $p \in T$ we have $\lim_{m \to \infty} g^m p = T_g^+$ and $\lim_{m \to \infty} g^{-m} p = T_g^-$.  A two point set of the form $\{g^\infty,g^{-\infty}\} \subset \bd_\infty(G,\calA)$ is called a \emph{periodic line}.  Given a periodic line $\ell = \{g^\infty,g^{-\infty}\}$ and a Grushko tree $T \in \calO$, by $\ell_T$ we denote the axis $T_g$.          
  
If $g \in G$ is non-peripheral we set $\calL_g =  \{ \{ ag^\infty,ag^{-\infty} \} \mid a \in G \}$.  In terms of a Grushko tree $T \in \calO$, the lines in $\calL_g$ correspond to the endpoints of the axes $T_{aga^{-1}}$ for each $a \in G$.  We observe that $\calL_g = \calL_{g^m}$ for any nonzero $m \in \ZZ$.

A \emph{periodic line collection} is a collection of the form:
\begin{equation*}
\calL = \calL_{g_1} \cup \cdots \cup \calL_{g_m}
\end{equation*}
where each $g_1,\ldots,g_m \in G$ is a non-peripheral element.  We call the elements $g_1, \ldots, g_m$ the \emph{generators} of the periodic line collection. For $\calL$ as above, we define
\begin{equation*}
\abs{\calL}_T = \sum_{j = 1}^m \abs{g_j}_T.
\end{equation*}  

In a free-product, every non-periphereal element $g \in G$ is contained in a unique maximal 2--ended subgroup denoted $N_g$.  In fact $N_g$ is both the normalizer of $g$ in $G$ and the stabilizer of the axis $T_g \subset T$.  Thus, for a general free product, $N_g$ is isomorphic to either $\ZZ$ or to $\ZZ_2 \ast \ZZ_2$.  As we are assuming that our free product $(G,\calA)$ is torsion-free, we must have that $N_g$ is isomorphic to $\ZZ$.  Given a periodic line collection $\calL = \calL_{g_1} \cup \cdots \cup \calL_{g_m}$, we denote $\calN_\calL = \{ [N_{g_1}], \ldots,[N_{g_m}]\}$.  When $\calL$ has a single generator $g$, we will use the notation $\calN_g$.  Note that this is just the conjugacy class of the subgroup $\I{g'}$ where $g'$ is an indivisible root of $g$.

\subsection{Decomposition spaces}\label{subsec:decomp}

In the setting of free groups, decomposition spaces were first introduced by Otal to study free splittings of free groups and related items~\cite{ar:Otal92}.

\begin{definition}\label{def:decomposition space}
Suppose $(G,\calA)$ is a torsion-free free product and let $\calL$ be a periodic line collection.  The \emph{decomposition space for $\calL$}, denoted $\calD(\calL)$, is the quotient of $\bd(G,\calA)$ by the collection $\calL$.  In other words, if $\calL = \calL_{g_1} \cup \cdots \cup \calL_{g_m}$, then we identify the points $ag_j^{-\infty} \sim ag_j^\infty$ for each $a \in G$ and $j = 1,\ldots,m$.  The quotient map is denoted by $q \from \bd(G,\calA) \to \calD(\calL)$.
\end{definition}

When necessary, we will write $\calD_{(G,\calA)}(\calL)$ and $q_{(G,\calA)}$ if we need to keep track of the free product $(G,\calA)$.

The connection between the decomposition space and the Bowditch boundary of a relatively hyperbolic group is well-known to experts and is given by the following lemma.  For a proof, see the work of Haulmark--Hruska and the references within the proof of~\cite[Proposition~6.7]{ar:HH}. 

\begin{lemma}\label{lem:boundary}
Suppose $(G,\calA)$ is a torsion-free free product and that $\calL$ is a periodic line collection.  Then $G$ is hyperbolic relative to $\calA \cup \calN_\calL$ and the Bowditch boundary of the relatively hyperbolic group $(G,\calA \cup \calN_\calL)$ is $G$--equivariantly homeomorphic to $\calD(\calL)$.
\end{lemma}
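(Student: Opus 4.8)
The plan is to deduce both assertions from the theory of relatively hyperbolic boundaries under a change of peripheral structure, treating the passage from $\calA$ to $\calA \cup \calN_\calL$ as adjoining to the peripheral collection the maximal two--ended subgroups carried by the lines of $\calL$. Throughout I use that, by Section~\ref{subsec:boundaries}, $(G,\calA)$ is relatively hyperbolic with Bowditch boundary $\bd(G,\calA)$, realized via the action on a Grushko tree $T$.

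First I would establish that $G$ is hyperbolic relative to $\calA \cup \calN_\calL$. Each generator $g_j$ is non--peripheral, hence acts loxodromically in the relatively hyperbolic structure $(G,\calA)$, and by Section~\ref{subsec:lines} the group $N_{g_j} \cong \ZZ$ is the maximal two--ended subgroup containing it, equal to the stabilizer of the axis $T_{g_j}$. These are exactly the maximal elementary subgroups of a finite family of independent loxodromic elements. Two structural facts drive the argument: each $N_{g_j}$ is two--ended, hence undistorted and relatively quasiconvex, so its cosets are uniformly quasiconvex lines; and the family $\calA \cup \calN_\calL$ is almost malnormal, since the $A_i$ are malnormal in the free product, distinct conjugates of the maximal subgroups $N_{g_j}$ meet trivially, and no $N_{g_j}$ conjugates into any $A_i$ because $g_j$ is non--peripheral. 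Granting these, one invokes the standard facts (Osin, Dahmani--Guirardel--Osin) that the maximal elementary subgroups of independent loxodromics are hyperbolically embedded in $(G,\calA)$ and that a finite almost malnormal family of hyperbolically embedded subgroups may be adjoined to the peripheral structure; this yields relative hyperbolicity of $(G, \calA \cup \calN_\calL)$ and identifies $\bd(G,\calA\cup\calN_\calL)$ as its Bowditch boundary.

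Next I would produce the homeomorphism $\calD(\calL) \cong \bd(G,\calA\cup\calN_\calL)$ via the change--of--peripheral--structure map used in this setting by Hallmark--Hruska \cite{ar:HH}. Because $\calA \cup \calN_\calL$ contains $\calA$ and declares the additional subgroups $N_{g_j}$ parabolic, there is a canonical $G$--equivariant continuous surjection $\Phi \from \bd(G,\calA) \to \bd(G,\calA \cup \calN_\calL)$ whose only non--trivial point preimages are the limit sets of the new peripheral cosets, each collapsed to its parabolic fixed point; concretely, adjoining a horoball along the line $aN_{g_j}$ forces its two ends to a common ideal point. The key computation is that the limit set of the conjugate $aN_{g_j}a^{-1}$ in $\bd(G,\calA)$ is exactly the pair of endpoints of the axis $T_{aga^{-1}}$, namely $\{ag_j^\infty, ag_j^{-\infty}\}$, so $\Phi$ identifies precisely these pairs. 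Since $\calN_\calL$ is almost malnormal, distinct lines $aN_{g_j}$ and $bN_{g_l}$ share no endpoint, and since the $N_{g_j}$ limit sets lie in $\bd_\infty(G,\calA)$ while the $A_i$ limit sets lie in $V_\infty(G,\calA)$, none of these two--point fibers interact. Thus the fibers of $\Phi$ are exactly the disjoint two--point sets $\{ag_j^\infty, ag_j^{-\infty}\}$ defining $\calD(\calL)$, so $\Phi$ factors through a continuous $G$--equivariant bijection $\calD(\calL) \to \bd(G,\calA\cup\calN_\calL)$; this is a homeomorphism because the source is compact and the target is Hausdorff.

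The main obstacle I anticipate lies in the two companion facts underlying the change--of--structure map: that adjoining the family $\{aN_{g_j}\}$ preserves hyperbolicity (resting on uniform quasiconvexity and the almost malnormal separation of these lines), and that the induced boundary map collapses each such line's pair of endpoints and nothing more, so that the quotient topology on $\calD(\calL)$ matches the topology on $\bd(G,\calA\cup\calN_\calL)$. Verifying that no extraneous identifications occur---equivalently, that the limit set of each new peripheral coset is precisely its two axis endpoints---is exactly where torsion--freeness and the maximality of the $N_{g_j}$ are used.
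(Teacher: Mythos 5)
Your proposal is correct and follows essentially the same route as the paper: the paper gives no proof of Lemma~\ref{lem:boundary}, deferring entirely to the proof of \cite[Proposition~6.7]{ar:HH} and the references therein, and the argument you outline --- Osin-type adjoining of the maximal elementary subgroups $N_{g_j}$ of loxodromic elements to the peripheral structure, then the change-of-peripheral-structure quotient map on Bowditch boundaries collapsing exactly the limit sets $\{ag_j^\infty, ag_j^{-\infty}\}$, upgraded to a homeomorphism by compactness of $\calD(\calL)$ and Hausdorffness of the target --- is precisely the standard machinery behind that citation. One microscopic addendum: the generators $g_1,\ldots,g_m$ need not be independent (some $N_{g_i}$ may be conjugate to $N_{g_j}$), so one should first discard redundant generators before invoking almost malnormality; this is harmless since $\calN_\calL$ is a set of conjugacy classes and the line collection $\calL$ is unchanged.
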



\section{Relative Whitehead graphs \`a la Guirardel--Horbez}\label{sec:whitehead}

We seek to have a finite model for understanding the topology of a decomposition space $\calD(\calL)$.  As in the case when $\calA = \emptyset$ studied by Otal~\cite{ar:Otal92} and Cashen--Macura~\cite{ar:CM11}, the object we ultimately use is a generalization of the classical notion of a Whitehead graph.  To this end, in this section we introduce the concept of a Whitehead graph for Grushko trees in $\calO$.  In the setting of free products, the Whitehead graph of a non-peripheral element was defined by Guirardel--Horbez \cite{ar:GH19}. As in the classical setting, Whitehead graphs can be used to detect if an element is simple.  We state the version of this fact in the setting of free products 
due to Guirardel--Horbez in Proposition~\ref{prop:whitehead criterion}.       

Let $(G,\calA)$ be a fixed non-sporadic free product.  Guirardel--Horbez define the notion of a Whitehead graph at a vertex $v$ in some Grushko tree $T \in \calO$ for a given periodic line collection $\calL$, denoted by $\Wh_T(\calL,v)$~\cite[Section~5.1]{ar:GH19}.  We give a slight, but equivalent, variant of their definition that naturally leads to the generalization we define in Section~\ref{sec:model}.  The vertex set of $\Wh_T(\calL,v)$ is the set of equivalence classes of directions at $v$, $Y \subset T - \{v\}$, under the action of $\Stab_T(v)$.  In each equivalence class, $[Y]$, a preferred direction is chosen; these preferred directions are enumerated by $Y_1,\ldots,Y_m$.  There is an oriented edge in $\Wh_T(\calL,v)$ from $[Y_i]$ to $[Y_j]$, labeled by $a \in \Stab_T(v)$, for each line $\ell \in \calL$ such that $\ell_T$ that meets both $Y_i$ and $aY_j$.  Note, in this case $a^{-1}\ell_T$ meets both $Y_j$ and $a^{-1}Y_i$ and represents the same edge with the opposite orientation.  We remark that this definition also makes sense for any point $p \in T$ that is not a vertex.  In this case $\Wh_T(\calL,p)$ has exactly two equivalence classes, each of which consists of a unique direction.  Further, in this case, the Whitehead graph $\Wh_T(\calL,p)$ is disconnected if and only if no geodesic in $\calL$ contains the point $p$. 

\begin{example}\label{ex:whitehead graph}
Let $G$ be the free group of rank three with basis $\{a,b,c\}$ and we consider the free factor system $\calA = \{[\I{a}]\}$.  Let $T \in \calO$ be the Grushko tree where all edges have length 1 and $T/G$ is the 2--rose where one edge is labeled by $b$ and the other labeled by $c$.  This quotient graph of groups is shown in Figure~\ref{fig:graph of groups}.

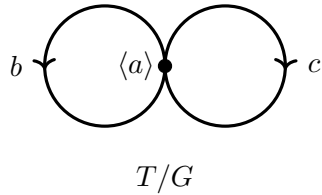
\begin{figure}[ht]
\begin{tikzpicture}
\node at (0,-1.5) {$T/G$};
\filldraw (0,0) circle [radius=2.5pt];
\draw[very thick,->-=0.51] (-0,0) arc (0:360:8mm) node[pos=0.5,label=left:{$b$}] {};
\draw[very thick,->-=0.51] (0,0) arc (180:-180:8mm) node[pos=0.5,label=right:{$c$}] {};
\node[left] at (0,0) {$\I{a}$};
\end{tikzpicture}
\caption{The quotient graph of groups $T/G$ in Example~\ref{ex:whitehead graph}.}\label{fig:graph of groups}
\end{figure}

Let $v$ be the unique vertex of $T$ fixed by $a$ and let $g = bacb^{-1}a^3c^{-1}$.  Let $e_1$ be the edge incident to $v$ and $bv$ and let $e_2$ be the edge incident to $v$ and $cv$.  We denote the direction at $v$ that contains $bv$ by $Y_b^+$ and we denote the direction at $v$ that contains $b^{-1}v$ by $Y_b^-$.  Similarly, we use $Y_c^+$ and $Y_c^-$ to the denote the directions at $v$ that contain $cv$ or $c^{-1}v$ respectively.  The axes of $g$ that meet one of these four directions are shown on the left in Figure~\ref{fig:whitehead graph}.  Using these preferred directions, the Whitehead graph $\Wh_T(\calL_g,v)$ is shown on the right in Figure~\ref{fig:whitehead graph}.

\begin{figure}[ht]
\begin{tikzpicture}
\begin{scope}[xshift=6cm]
\draw[red,very thick,->-] (0,0) -- (2,0) node[pos=0.5,label=below:{\color{black}$1$}] {};
\draw[blue,very thick,->-] (2,0) -- (2,2) node[pos=0.5,label=right:{\color{black}$a$}] {};
\draw[black!30!green,very thick,->-] (2,2) -- (0,2) node[pos=0.5,label=above:{\color{black}$1$}] {};;
\draw[purple,very thick,->-] (0,2) -- (0,0) node[pos=0.5,label=left:{\color{black}$a^3$}] {};
\filldraw (0,0) circle [radius=2.5pt];
\filldraw (2,0) circle [radius=2.5pt];
\filldraw (2,2) circle [radius=2.5pt];
\filldraw (0,2) circle [radius=2.5pt];
\node[below] at (0,0) {$[Y_c^-]$};
\node[below] at (2,0) {$[Y_b^-]$};
\node[above] at (2,2) {$[Y_c^+]$};
\node[above] at (0,2) {$[Y_b^+]$};
\node at (1,-1.5) {$\Wh_T(\calL_g,v)$};
\end{scope}
\begin{scope}
\node[draw=none,minimum size=5cm,regular polygon,regular polygon sides=8] at (0,1) (a) {};
\foreach \x in {1,6} {
	\draw[very thick,->-=0.6] (0,1) -- (a.corner \x);
}
\draw[very thick,->-=0.6] (0,1) -- (a.corner 7) node[pos=0.55,label=below:{$e_2$}] {};
\draw[very thick,->-=0.6] (0,1) -- (a.corner 8) node[pos=0.55,label=above:{$e_1$}] {};
\foreach \x in {2,5} {
	\draw[very thick,->-=0.5] (a.corner \x) -- (0,1);
}
\draw[very thick,->-=0.5] (a.corner 3) -- (0,1) node[pos=0.45,label=above:{$b^{-1}e_1$}] {};	
\draw[very thick,->-=0.5] (a.corner 4) -- (0,1) node[pos=0.45,label=below:{$c^{-1}e_2$}] {};
\foreach \x in {1,2,...,8} {
	\fill (a.corner \x) circle[radius=2pt];
}
\def\ss{6};
\def\labs{0.15}
\def\axescolor{blue}
\draw[blue,thick] ([xshift=-\ss,yshift=-\ss]a.corner 3) -- ([xshift=-\ss,yshift=-\ss]0,1) -- ([xshift=-\ss,yshift=-\ss]a.corner 6);
\draw[blue,thick] ([xshift=\ss,yshift=\ss]a.corner 2) -- ([xshift=\ss,yshift=\ss]0,1) -- ([xshift=\ss,yshift=\ss]a.corner 7);
\draw[purple,thick] ([xshift=-\ss,yshift=\ss]a.corner 4) -- ([xshift=-\ss,yshift=\ss]0,1) -- ([xshift=-\ss,yshift=\ss]a.corner 1);
\draw[purple,thick] ([xshift=\ss,yshift=-\ss]a.corner 5) -- ([xshift=\ss,yshift=-\ss]0,1) -- ([xshift=\ss,yshift=-\ss]a.corner 8);
\draw[red,thick] ([xshift=-\ss,yshift=-2*\ss]a.corner 3) -- ([xshift=-5*\ss]0,1) -- ([xshift=-\ss,yshift=2*\ss]a.corner 4);
\draw[black!30!green,thick] ([xshift=\ss,yshift=-2*\ss]a.corner 8) -- ([xshift=5*\ss]0,1) -- ([xshift=\ss,yshift=2*\ss]a.corner 7);
\node[above right=\labs] at (a.corner 1) {$a^{-3}Y_b^+$};
\node[above left=\labs] at (a.corner 2) {$a^{-1}Y_b^-$};
\node[left=\labs] at (a.corner 3) {$Y_b^-$};
\node[left=\labs] at (a.corner 4) {$Y_c^-$};
\node[below=\labs] at (a.corner 5) {$a^3Y_c^-$};
\node[below=\labs] at (a.corner 6) {$aY_c^+$};
\node[right=\labs] at (a.corner 7) {$Y_c^+$};
\node[right=\labs] at (a.corner 8) {$Y_b^+$};
\filldraw (0,1) circle [radius=2.5pt];
\end{scope}
\end{tikzpicture}
\caption{Some translates of $T_g$ in $T$ that meet $v$ and the Whitehead graph $\Wh_T(\calL_g,v)$ from Example~\ref{ex:whitehead graph}.}\label{fig:whitehead graph}
\end{figure}
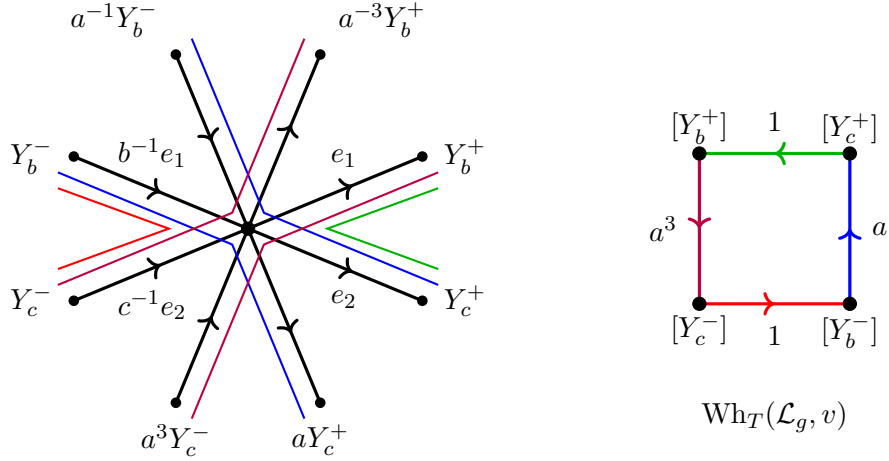
\end{example}

For a connected subgraph $U \subseteq \Wh_T(\calL,v)$ and a vertex $[Y] \in U$, the subgroup of $\Stab_T(v)$ generated by the products of labels along the closed paths based at $[Y]$ is denoted $\Mon(U,[Y])$ and is called the \emph{monodromy subgroup}.  Referring to Example~\ref{ex:whitehead graph}, we have that $\Mon(\Wh_T(\calL_g,v),[Y_c^-]) = \I{a^4}$.

As noted by Guirardel--Horbez, the conjugacy class of $\Mon(U,[Y])$ in $\Stab_T(v)$ is independent of the choice of vertex $[Y]$ and the choices of preferred representatives $Y_1,\ldots,Y_m$.  Moreover, given a maximal tree $\tau \subseteq U$, it is possible to choose preferred directions so that the label on each edge in $\tau$ is trivial.  The conjugacy class of $\Mon(U,[Y])$ in $\Stab_T(v)$ is denoted $\Mon(U)$.  In the case that $\Mon(U)$ is the conjugacy class of the trivial subgroup $\{1\}$ or the conjugacy class of $\Stab_T(v)$, we will abuse notion and consider $\Mon(U)$ as the subgroup $\{1\}$ or $\Stab_T(v)$ respectively.  

\begin{definition}\label{def:admissible cut}
Let $T \in \calO$ be a Grushko tree, let $v \in T$ be a vertex and let $\calL$ be a periodic line collection.  The Whitehead graph $\Wh_T(\calL,v)$ has an \emph{admissible cut} if either:
\begin{itemize}\itemindent=25pt
\item[(type i)] $\Wh_T(\calL,v)$ is a disjoint union $U \sqcup V$ where:
\begin{itemize}
	\item $U$ is connected with $\Mon(U) = \{1\}$, and
    \item $V$ is nonempty if $\Stab_T(v)$ is trivial; or
\end{itemize}
\item[(type ii)] $\Wh_T(\calL,v)$ is a union $U \cup V$ where $U \cap V$ is a single vertex, $U$ is connected and $\Mon(U) = \{1\}$.
\end{itemize}
\end{definition}

We remark that the second requirement for a type i admissible cut is omitted from the definition by Guirardel--Horbez.  However, this requirement is implicit in their arguments involving the notion of an admissible cut.  We record the following characterizations of admissible cuts.  The proof of this lemma follows from the proof of Lemma~5.2 in the work of Guirardel--Horbez~\cite{ar:GH19}.   

\begin{lemma}\label{lem:whitehead}
Let $T \in \calO$ be a Grushko tree, let $v \in T$ be a vertex and let $\calL$ be a periodic line collection.
\begin{enumerate}
\item\label{item:type i expand} The Whitehead graph $\Wh_T(\calL,v)$ has a type i admissible cut if and only if there is a Grushko tree $T' \in \calO$ and a nontrivial collapse $T' \to T$ that is injective when restricted to $\ell_{T'}$ for any line $\ell \in \calL$.
\item\label{item:type ii fold} The Whitehead graph $\Wh_T(\calL,v)$ has a type ii admissible cut if and only if there is a Grushko tree $T' \in \calO$ and a fold $T' \to T$ that is injective when restricted to $\ell_{T'}$ for any line $\ell \in \calL$.
\end{enumerate}
\end{lemma}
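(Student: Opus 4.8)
The plan is to follow the proof of Lemma~5.2 of Guirardel--Horbez, setting up in each of the two cases a correspondence between admissible cuts of $\Wh_T(\calL,v)$ and $G$--equivariant elementary modifications of $T$ supported on the orbit $Gv$. Two mechanisms drive the whole argument. First, the edges of $\Wh_T(\calL,v)$ record exactly the turns taken by the lines of $\calL$ at $v$: an edge from $[Y_i]$ to $[Y_j]$ labeled by $a$ is precisely the datum of a line whose axis turns from $Y_i$ into $aY_j$ at $v$. Consequently, a line crossing a newly created edge corresponds precisely to a Whitehead-graph edge crossing the cut. Second, the monodromy condition $\Mon(U)=\{1\}$ is exactly what is needed to perform the modification $G$--equivariantly with trivial edge stabilizer, so that the resulting tree stays in $\calO$.

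For the forward direction of \eqref{item:type i expand}, given a type i cut $\Wh_T(\calL,v)=U\sqcup V$, I would build $T'$ by equivariantly blowing up $v$ into an edge $e=[v_0,v_1]$, attaching the directions of $U$ at $v_0$ and the directions of $V$ together with $\Stab_T(v)$ at $v_1$, and extending over $Gv$. Because $\Mon(U)=\{1\}$, one can choose preferred representatives so that the labels along a maximal tree of $U$ are trivial and the boundary turns close up consistently; this yields a well-defined $G$--action in which the orbit of $e$ has trivial stabilizer, so $T'$ is again a Grushko tree. The collapse $f\from T'\to T$ is the collapse of $Ge$. Since $U\sqcup V$ is disconnected, no line turns from $U$ to $V$ at $v$, hence no $\ell_{T'}$ crosses $Ge$ and $f$ is injective on every $\ell_{T'}$; the clause "$V$ nonempty if $\Stab_T(v)$ is trivial'' is exactly what prevents $v_1$ from being a collapsible degree-one trivial vertex, making $f$ nontrivial. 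Conversely, a nontrivial line-injective collapse $f\from T'\to T$ collapses a $G$--invariant forest; recording which directions at the preimage of $v$ lie on the collapsed side produces the decomposition $U\sqcup V$, the trivial edge stabilizers of $T'$ give $\Mon(U)=\{1\}$, and line-injectivity of $f$ forces the disconnection.

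The argument for \eqref{item:type ii fold} is parallel, with the type ii cut $U\cup V$, $U\cap V=\{[Y_0]\}$, corresponding to an unfold rather than a blow-up. Here the shared vertex $[Y_0]$ is the germ of the edge that gets folded: unfolding duplicates this germ, so it appears on both sides. I would construct $T'$ by pulling the directions of $U\setminus\{[Y_0]\}$ off along a new edge while leaving a copy of the $[Y_0]$-germ on the $V$--side, again using $\Mon(U)=\{1\}$ to keep edge stabilizers trivial, and take $f\from T'\to T$ to be the resulting fold. Line-injectivity of $f$ corresponds to $U$ and $V$ meeting in the single vertex $[Y_0]$: if two pieces shared more, some line would be forced to traverse both folded germs and hence backtrack in $T$, contradicting that axes are geodesics. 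The reverse direction reads the cut off directly from the pair of edge-germs being identified by the fold.

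I expect the main obstacle to be the forward constructions, namely carrying out the blow-up and the unfold $G$--equivariantly over all of $Gv$ and verifying that $T'$ genuinely lies in $\calO$ --- in particular that $Ge$ has trivial stabilizer and that no new non-peripheral elliptic elements are created. This is exactly the point at which $\Mon(U)=\{1\}$ must be invoked to produce a consistent section (trivializing the labels along a maximal tree of $U$), and the bookkeeping with directions at $v$ and the observers topology is the delicate part. By contrast, the equivalence between line-injectivity and the combinatorial shape of the cut --- disconnection for type i and a single shared vertex for type ii --- is comparatively direct once one records that Whitehead-graph edges are in bijection with the turns of lines of $\calL$ at $v$.
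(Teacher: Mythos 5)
Your proposal is correct and follows exactly the route the paper takes: the paper gives no independent argument but states that the lemma ``follows from the proof of Lemma~5.2'' of Guirardel--Horbez, and your sketch reconstructs precisely that proof --- the equivariant blow-up of $v$ for type i cuts and the unfold for type ii cuts, with $\Mon(U)=\{1\}$ (after trivializing labels along a maximal tree of $U$) guaranteeing trivial stabilizer of the new edge orbit, and line-injectivity corresponding to the absence of $U$--$V$ edges (type i) or to the single shared vertex $[Y_0]$ (type ii). Your identification of the role of the clause ``$V$ nonempty if $\Stab_T(v)$ is trivial'' in making the collapse nontrivial likewise matches the paper's remark that this hypothesis is implicit in the Guirardel--Horbez arguments.
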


The definition of a fold is not used the sequel.  The property that we will make use of is that if there is a fold $T' \to T$, then there is a Grushko tree $T_0$ and collapse maps $T_0 \to T$ and $T_0 \to T'$.  In particular, we have that $T$ and $T'$ are compatible and thus $d(\pi(T),\pi(T')) \leq 1$. 

\begin{lemma}\label{lem:simple}
Let $T \in \calO$ be a Grushko tree and let $g \in G$ be a non-peripheral element.  If either,
\begin{enumerate}
\item\label{item:uncrossed edge simple} there is an edge $e \in T$ that is not crossed by $aT_g$ for any $a \in G$, or
\item \label{item:type i simple} there is a vertex $v \in T$ such that $\Wh_T(\calL_g,v)$ has a type i admissible cut,
\end{enumerate}
then $g$ is simple.  Moreover, there is a free splitting $S$ for $(G,\calA)$ in which $g$ is elliptic such that $d(\pi(T),S) \leq 1$.
\end{lemma}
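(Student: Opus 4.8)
The plan is to produce, in each of the two cases, an explicit free splitting $S$ by collapsing edges of a suitable Grushko tree, and then to read off both the ellipticity of $g$ and the bound $d(\pi(T),S)\le 1$ directly from the collapse maps. The unifying idea is that collapsing a $G$--invariant subforest of a Grushko tree yields a new $(G,\calA)$--tree with trivial edge stabilizers, and that whether $g$ becomes elliptic in the result is governed entirely by whether the conjugate axes of $g$ lie in the collapsed part.

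First I would dispatch case~\eqref{item:uncrossed edge simple}. Let $S$ be obtained from $T$ by collapsing the $G$--invariant subforest $\Psi$ consisting of all edges \emph{not} in the orbit $Ge$, so that $S$ retains the single edge orbit $Ge$. Since $T$ is a Grushko tree its edge stabilizers are trivial, and these persist under the collapse, so $S$ is a free splitting. The hypothesis that $aT_g$ does not cross $e$ for any $a \in G$ says precisely that no edge of any conjugate axis $T_{aga^{-1}} = aT_g$ lies in $Ge$; equivalently, every conjugate axis is contained in $\Psi$. In particular the connected set $T_g$ lies in $\Psi$ and so collapses to a single point in $S$, whence $g$ is elliptic in $S$ and therefore simple. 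For the distance bound, the collapse $T \to S$ induces a collapse of $\calZ$--splittings $\pi(T) \to S$, which exhibits $\pi(T)$ and $S$ as compatible; by Definition~\ref{def:zf} this gives $d(\pi(T),S) \le 1$.

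For case~\eqref{item:type i simple} I would invoke Lemma~\ref{lem:whitehead}\eqref{item:type i expand} to obtain a Grushko tree $T'$ and a nontrivial collapse $f \from T' \to T$ that is injective on $\ell_{T'}$ for every line $\ell \in \calL_g$. Let $\Phi \subseteq T'$ be the nonempty $G$--invariant subforest collapsed by $f$. Injectivity of $f$ on each axis $\ell_{T'}$ forces no edge of any conjugate axis of $g$ to lie in $\Phi$, so each such axis is contained in the complementary subforest. Now let $S$ be obtained from $T'$ by collapsing every edge \emph{not} in $\Phi$; then $S$ is a free splitting whose edge orbits are those of $\Phi$, and since the connected set $T'_g$ lies entirely in the collapsed complement it maps to a single point, making $g$ elliptic in $S$ and hence simple. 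Finally $T'$ collapses onto $T$ (via $f$) and onto $S$ (by construction), so after rescaling all edges to unit length, $\pi(T')$ collapses onto both $\pi(T)$ and $S$; thus $\pi(T)$ and $S$ are compatible and $d(\pi(T),S) \le 1$.

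The routine verifications are that collapsing a $G$--invariant subforest of a minimal Grushko tree yields a minimal $(G,\calA)$--tree with trivial edge stabilizers (so that $S$ genuinely is a free splitting and a vertex of $\ZF$) and that a connected axis contained in the collapsed subforest maps to one point. The main conceptual step, and the one I would be most careful with, is the correct reading of the injectivity clause in Lemma~\ref{lem:whitehead}\eqref{item:type i expand}: it is exactly what guarantees that every edge of every conjugate axis of $g$ survives in the complement of $\Phi$, so that collapsing that complement simultaneously kills all the axes while the retained forest $\Phi$ remains to witness compatibility with $\pi(T)$.
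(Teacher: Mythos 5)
Your proposal is correct and follows essentially the same route as the paper: case~\eqref{item:uncrossed edge simple} is handled by collapsing everything outside the orbit $Ge$, and case~\eqref{item:type i simple} by invoking Lemma~\ref{lem:whitehead}\eqref{item:type i expand} to get the collapse $T' \to T$ injective on all conjugate axes, with compatibility through $T'$ giving $d(\pi(T),S)\leq 1$ exactly as in the paper. The only (harmless) deviation is in case~\eqref{item:type i simple}, where the paper reduces to case~\eqref{item:uncrossed edge simple} by keeping a single uncrossed edge orbit of $T'$, while you collapse the complement of the entire collapsed subforest $\Phi$, retaining possibly several edge orbits --- both yield a free splitting in which $g$ is elliptic.
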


\begin{proof}
First we assume that \eqref{item:uncrossed edge simple} holds.  We can collapse all edges of $T$ that are not in the orbit of $e$ and obtain a free splitting $S$ for $(G,\calA)$ where $g$ is elliptic.  Hence $g$ is simple.  As $T$ collapses to $S$, we have that $d(\pi(T),S) \leq 1$.

Next we assume that \eqref{item:type i simple} holds.  By Lemma~\ref{lem:whitehead}, there is a Grushko tree $T' \in \calO$ and a collapse map $T' \to T$ that restricts to an injection $T'_g \to T_g$.  Thus there is an edge in $T'$ that is not crossed by $aT'_g$ for any $a \in G$ and as in \eqref{item:uncrossed edge simple} we conclude that $T'$ collapses to a free splitting $S$ for $(G,\calA)$ where $g$ is elliptic.  Thus $g$ is simple and as $T'$ collapses to both $T$ and $S$, we have that $d(\pi(T),S) \leq 1$.
\end{proof}

\begin{definition}\label{def:whitehead reduced}
Suppose $\calL$ is a periodic line collection.  We say a Grushko tree $T \in \calO$ is \emph{Whitehead reduced for $\calL$} if $\Wh_T(\calL,v)$ does not have an admissible cut for any vertex $v \in T$.  If $\calL = \calL_g$ for some non-peripheral element $g \in G$, we say $T$ is \emph{Whitehead reduced for $g$}.  
\end{definition}

\begin{lemma}\label{lem:whitehead reduced}
Suppose $\calL$ is a periodic line collection and that $T \in \calO$ is a Grushko tree.  Then there exists a Grushko tree $T' \in \calO$ with $d(\pi(T),\pi(T')) \leq \abs{\calL}_T$ and such that one of the following holds:
\begin{enumerate}
\item\label{item:simple} $T'$ contains an edge that is not crossed by $\ell_{T'}$ for any line $\ell \in \calL$; or
\item\label{item:reduced} $T'$ is Whitehead reduced for $\calL$ and $\abs{\calL}_{T'} \leq \abs{\calL}_T$.
\end{enumerate}
\end{lemma}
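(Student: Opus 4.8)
The plan is to run a Whitehead reduction on $T$, organized as an induction on the combinatorial length $\abs{\calL}_T$, so that the telescoping of the per-step bound $d \leq 1$ produces the total bound $\abs{\calL}_T$ automatically. For the base of the induction, if $T$ already contains an edge crossed by no line $\ell_T$ then we are in conclusion \eqref{item:simple} with $T' = T$ and $d = 0$; and if $T$ is Whitehead reduced for $\calL$ in the sense of Definition~\ref{def:whitehead reduced}, then we are in conclusion \eqref{item:reduced}, again with $T' = T$. So I would assume that neither holds, which by Definition~\ref{def:whitehead reduced} means some vertex $v \in T$ admits an admissible cut of $\Wh_T(\calL,v)$ (Definition~\ref{def:admissible cut}), of type i or type ii. Throughout I use that $\abs{\calL}_T \geq 1$, since $\calL$ has at least one generator and that generator is non-peripheral, hence crosses an edge of $T$.

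In the type i case I would invoke Lemma~\ref{lem:whitehead}\eqref{item:type i expand} to produce a Grushko tree $T_1 \in \calO$ and a nontrivial collapse $T_1 \to T$ that is injective on each $\ell_{T_1}$. Since the collapse is injective on lines, no line crosses the collapsed edges, so $T_1$ contains an edge crossed by no line of $\calL$; thus $T_1$ satisfies \eqref{item:simple}. As $T_1$ collapses to $T$, the trees are compatible and $d(\pi(T),\pi(T_1)) \leq 1 \leq \abs{\calL}_T$, finishing this case. In the type ii case I would instead use Lemma~\ref{lem:whitehead}\eqref{item:type ii fold} together with the compatibility remark following it: the admissible cut yields a Grushko tree $T_1$ with $d(\pi(T),\pi(T_1)) \leq 1$, and—this is the quantitative heart of the matter—the associated Whitehead move strictly shortens the collection, $\abs{\calL}_{T_1} < \abs{\calL}_T$. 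Granting this, the inductive hypothesis applied to $T_1$ (legitimate since $\abs{\calL}_{T_1} < \abs{\calL}_T$) supplies a tree $T'$ with $d(\pi(T_1),\pi(T')) \leq \abs{\calL}_{T_1}$ satisfying \eqref{item:simple} or \eqref{item:reduced}, and in the latter case $\abs{\calL}_{T'} \leq \abs{\calL}_{T_1} \leq \abs{\calL}_T$. The triangle inequality then gives $d(\pi(T),\pi(T')) \leq 1 + \abs{\calL}_{T_1} \leq \abs{\calL}_T$, closing the induction.

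The step I expect to be the main obstacle is the strict decrease $\abs{\calL}_{T_1} < \abs{\calL}_T$ in the type ii case while keeping $d(\pi(T),\pi(T_1)) \leq 1$. The fold $T_1 \to T$ furnished by Lemma~\ref{lem:whitehead}\eqref{item:type ii fold} is injective on lines, and such a map preserves $\abs{\calL}$; so the shortening tree cannot be read off the fold directly. Rather, I would extract it from the common refinement $T_0$ (with collapses $T_0 \to T$ and $T_0 \to T_1$) produced in the remark after Lemma~\ref{lem:whitehead}, by collapsing $T_0$ across an edge that the lines of $\calL$ genuinely cross; any such collapse lands at a tree compatible with $T$, hence at $\ZF$--distance at most one, while strictly reducing $\abs{\calL}$. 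The real work is to verify that the cut-vertex hypothesis $U \cap V = \{[Y]\}$ together with the monodromy condition $\Mon(U) = \{1\}$ is exactly what guarantees such a length-reducing collapse: the condition $\Mon(U) = \{1\}$ is what makes the move well defined over the vertex group $\Stab_T(v)$, while the single shared vertex forces the net change in $\abs{\calL}$ to be negative rather than zero. This is the analogue in the free-product setting of Whitehead's classical peak-reduction lemma, and I would prove it by a direct count of the turns of the lines of $\calL$ at $v$ that are created and destroyed by the move, exactly as in the analysis underlying Lemma~5.2 of Guirardel--Horbez.
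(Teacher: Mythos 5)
Your type~i case is fine and agrees with the paper: the collapse $T_1 \to T$ from Lemma~\ref{lem:whitehead}\eqref{item:type i expand} is injective on every $\ell_{T_1}$, so the collapsed edges are uncrossed in $T_1$, and conclusion~\eqref{item:simple} holds after one step of distance at most~$1$. The genuine gap is exactly where you flagged it, and your proposed repair does not close it. Lemma~\ref{lem:whitehead}\eqref{item:type ii fold} produces a fold $T_1 \to T$ that is injective on each $\ell_{T_1}$, hence $\abs{\calL}_{T_1} = \abs{\calL}_T$: the move is length-\emph{preserving}, so your induction on $\abs{\calL}_T$ has nothing to recurse on. Your fallback --- collapsing the common refinement $T_0$ across an edge orbit that the lines genuinely cross --- fails on two counts. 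First, such a collapse need not land in $\calO$: collapsing a crossed orbit of edges in a Grushko tree typically creates a non-peripheral vertex stabilizer (an HNN-type vertex group when the collapsed orbit gives a loop in the quotient, or a free product of two peripheral subgroups otherwise), so you do not obtain a Grushko tree to which an inductive hypothesis can be applied. Second, even when such a collapse does stay in $\calO$, it only reduces length relative to $T_0$, and since $\abs{\calL}_{T_0} \geq \abs{\calL}_T$ this gives no strict decrease below $\abs{\calL}_T$. The assertion that ``the single shared vertex forces the net change in $\abs{\calL}$ to be negative rather than zero'' is precisely the peak-reduction inequality you would need, and in the free-product setting (with edge labels in $\Stab_T(v)$ and the monodromy condition) it is not established anywhere in the paper, nor by your sketch.

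The paper terminates the type~ii chain with a different, more robust monovariant, and you should compare. Each move keeps $\abs{\calL}_{T_i} = \abs{\calL}_T$ constant but strictly increases the number of edge orbits of $T_i/G$ (degree-two vertices being permitted at this stage), while contributing at most $1$ to the distance in $\ZF$. Since the lines of $\calL$ cross at most $\abs{\calL}_{T_i} = \abs{\calL}_T$ edge orbits in total, as soon as the number of edge orbits exceeds $\abs{\calL}_T$ some orbit is uncrossed and conclusion~\eqref{item:simple} holds; so the process stops within $L = \abs{\calL}_T$ steps, and telescoping the per-step bound gives $d(\pi(T),\pi(T')) \leq \abs{\calL}_T$. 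In short: where you try to make $\abs{\calL}$ strictly decrease, the paper lets it stay constant and instead runs a pigeonhole on the number of edge orbits versus the total combinatorial length. To salvage your route you would have to prove a free-product analogue of Whitehead's peak-reduction lemma (strict length decrease at $\ZF$--distance at most $1$, staying inside $\calO$), which is substantially harder than the counting argument the lemma actually needs.
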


\begin{proof}
If $T$ is Whitehead reduced or contains an edge that is not crossed by $\ell_T$ for any line in $\calL$, then set $T' = T$.  

Else, there is a vertex $v \in T$ such that $\Wh_T(\calL,v)$ has an admissible cut.  By Lemma~\ref{lem:whitehead}, there is a Grushko tree $T_1 \in \calO$ and a simplicial map $T_1 \to T$ where $\abs{\calL}_{T_1} = \abs{\calL}_T$ and for which $T_1/G$ has more edges than $T/G$. (Note that $T_1$ is allowed to have vertices of degree two in the case of a fold, if the degree two vertices were removed then we would have $\abs{\calL}_{T_1} < \abs{\calL}$.) Further, we see that $d(\pi(T_1),\pi(T)) \leq 1$.  

If $T_1$ is Whitehead reduced or contains an edge that is not crossed by $\ell_{T_1}$ for any line in $\calL$, then set $T' = T_1$.  Else, we repeat this process of producing Grushko trees $T_i \in \calO$ where $\abs{\calL}_{T_i} = \abs{\calL}_T$, the number of edges in $T_i/G$ is at least $i+1$ and $d(\pi(T_i),\pi(T)) \leq i$ as long as $T_{i-1}$ is not Whitehead reduced for $\calL$ and every edge in $T_{i-1}$ is crossed by $\ell_{T_{i-1}}$ for some line in $\calL$. 

This process must terminate by $L = \abs{\calL}_T$.  Indeed, the number of orbits of edges in a hypothetical $T_L$ is at least $\abs{\calL}_T+1$ and thus there is an edge $e \subset T_{L}$ that is not crossed by $\ell_{T_L}$ for any line in $\calL$.
\end{proof}

Combining Lemmas~\ref{lem:simple} and \ref{lem:whitehead reduced}, we obtain the following corollary.

\begin{corollary}\label{co:whitehead reduced}
Suppose $g \in G$ is a non-peripheral element that is not simple, $L >0$ and $T \in \calO_L(g)$.  Then there exists a Grushko tree $T' \in \calO_L(g)$ that is Whitehead reduced for $g$ such that $d(\pi(T),\pi(T')) \leq L$.
\end{corollary}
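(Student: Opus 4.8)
The plan is to feed the single periodic line collection $\calL = \calL_g$ into Lemma~\ref{lem:whitehead reduced} and then use the hypothesis that $g$ is not simple to rule out one of the two resulting alternatives. The first thing I would record is that, since $\calL_g$ has the single generator $g$, we have $\abs{\calL_g}_T = \abs{g}_T \leq L$. This is precisely the quantity that will simultaneously control the combinatorial length of $g$ in the output tree and the distance traveled in $\ZF$.

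Applying Lemma~\ref{lem:whitehead reduced} with $\calL = \calL_g$ produces a Grushko tree $T' \in \calO$ with $d(\pi(T),\pi(T')) \leq \abs{\calL_g}_T = \abs{g}_T \leq L$ for which either alternative~\eqref{item:simple} or alternative~\eqref{item:reduced} holds. I would dispose of alternative~\eqref{item:simple} first. In that case $T'$ contains an edge that is not crossed by $\ell_{T'}$ for any line $\ell \in \calL_g$. Since the lines of $\calL_g$ are exactly the pairs $\{ag^\infty,ag^{-\infty}\}$, whose realizations in $T'$ are the axes $T'_{aga^{-1}} = aT'_g$, such an edge is not crossed by $aT'_g$ for any $a \in G$. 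By Lemma~\ref{lem:simple}\eqref{item:uncrossed edge simple} this forces $g$ to be simple, contradicting our standing hypothesis. Hence alternative~\eqref{item:simple} cannot occur.

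Therefore alternative~\eqref{item:reduced} holds: $T'$ is Whitehead reduced for $\calL_g$---equivalently, Whitehead reduced for $g$ in the sense of Definition~\ref{def:whitehead reduced}---and $\abs{g}_{T'} = \abs{\calL_g}_{T'} \leq \abs{\calL_g}_T = \abs{g}_T \leq L$, so that $T' \in \calO_L(g)$. Combined with the distance estimate $d(\pi(T),\pi(T')) \leq \abs{g}_T \leq L$ already obtained, this $T'$ satisfies every requirement of the statement.

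There is no serious obstacle here, as the real work is packaged into the two cited lemmas; the only point demanding care is the identification in the second paragraph of ``an edge uncrossed by every $\ell_{T'}$ with $\ell \in \calL_g$'' with ``an edge uncrossed by every translate $aT'_g$.'' This identification is what makes Lemma~\ref{lem:simple}\eqref{item:uncrossed edge simple} applicable and thereby converts the non-simplicity hypothesis into the exclusion of alternative~\eqref{item:simple}.
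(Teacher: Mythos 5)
Your proposal is correct and matches the paper's intended argument: the paper derives this corollary precisely by combining Lemma~\ref{lem:simple} and Lemma~\ref{lem:whitehead reduced}, using non-simplicity of $g$ to exclude the uncrossed-edge alternative exactly as you do. Your bookkeeping that $\abs{\calL_g}_T = \abs{g}_T \leq L$ controls both the distance in $\ZF$ and the membership $T' \in \calO_L(g)$ via the inequality $\abs{\calL_g}_{T'} \leq \abs{\calL_g}_T$ is the right reading of the lemma's second alternative.
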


Guirardel--Horbez show that their notion of a Whitehead graph can be used to detect simplicity of a non-peripheral element, generalizing Whitehead's well-known cut vertex criterion~\cite{ar:Whitehead36}. 

\begin{proposition}[{\cite[Proposition~5.1]{ar:GH19}}]\label{prop:whitehead criterion}
Suppose $g \in G$ is non-peripheral.  Then $g$ is simple if and only if for each Grushko tree $T \in \calO$, there is a vertex $v \in T$ such that $\Wh_T(\calL_g,v)$ has an admissible cut.
\end{proposition}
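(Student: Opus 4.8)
The plan is to recast the statement in terms of \emph{Whitehead reduced} trees (Definition~\ref{def:whitehead reduced}). A Grushko tree $T$ fails to have an admissible cut at every vertex precisely when it is Whitehead reduced for $g$, so the right-hand condition---that every $T \in \calO$ carries an admissible cut at some vertex---is equivalent to the assertion that \emph{no} Grushko tree is Whitehead reduced for $g$. I would prove the two implications separately, and the point is that one of them is already in hand.

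For the implication ``every $T$ has a cut $\Rightarrow g$ simple'' I would argue the contrapositive, which is exactly Corollary~\ref{co:whitehead reduced}: if $g$ is non-peripheral and not simple, then from any Grushko tree $T$ (taking $L = \abs{g}_T$) the corollary produces $T' \in \calO_L(g)$ that is Whitehead reduced for $g$, and such a $T'$ witnesses the failure of the right-hand condition. Thus this direction is immediate. It is worth recording that there is no circularity here: Corollary~\ref{co:whitehead reduced} depends only on Lemmas~\ref{lem:simple} and~\ref{lem:whitehead reduced}, and none of these uses the present proposition.

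All of the content lies in the forward implication ``$g$ simple $\Rightarrow$ every $T$ has a cut.'' Given $g$ simple and an arbitrary Grushko tree $T$, I must find a vertex with an admissible cut; if $T$ is not Whitehead reduced such a cut exists by definition, so I may assume $T$ is Whitehead reduced and aim for a contradiction with simplicity. In that case $T$ has no uncrossed edge, since an edge crossed by no translate of $T_g$ makes the corresponding direction an isolated vertex of its Whitehead graph, hence a type~i admissible cut. Next I would make simplicity concrete: since $g$ is simple it is elliptic in a free splitting $S$, and blowing up the vertex groups of $S$ into their own Grushko trees yields a Grushko tree $\widehat S$ refining $S$ in which the edges inherited from $S$ are crossed by no line of $\calL_g$ (each conjugate of $g$ being elliptic keeps its axis inside a single blown-up vertex). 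Thus $\widehat S$ has uncrossed edges, and $\overline{\calL_g}$ is carried by the proper free factor system obtained by collapsing everything but those edges. The essential step is then to transport this feature to $T$: connect $T$ to $\widehat S$ through a path of elementary expansions, collapses, and folds in the connected spine of $\calO$, chosen so that $T' \mapsto \abs{g}_{T'}$ is non-increasing toward its minimum, and observe via Lemma~\ref{lem:whitehead} that each elementary move between length-minimal trees is witnessed by a type~i or type~ii admissible cut; the move incident to $T$ then exhibits a cut at a vertex of $T$, contradicting Whitehead reducedness.

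The hard part is precisely this transport step. It is the free-product form of Whitehead's peak-reduction lemma: one must show that $T' \mapsto \abs{g}_{T'}$ can be driven to its minimum through length-non-increasing elementary moves and that an admissible cut detects the first such move. Phrased tree-independently, the claim is that a lamination carried by a proper free factor system disconnects some Whitehead graph in \emph{every} Grushko tree, which is the generalization of Whitehead's classical cut-vertex criterion~\cite{ar:Whitehead36} carried out by Guirardel--Horbez~\cite{ar:GH19}. Since the proposition is cited from their work, in the paper I would simply appeal to it; the outline above records how I would reconstruct the argument from the tools assembled in this section.
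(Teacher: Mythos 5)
Your proposal is correct and ends up exactly where the paper does: the paper offers no proof of this proposition at all, importing it wholesale from Guirardel--Horbez~\cite{ar:GH19}, and you likewise defer the hard implication ($g$ simple $\Rightarrow$ admissible cut in every tree, the peak-reduction step) to that source. Your additional observation that the converse implication follows non-circularly from Corollary~\ref{co:whitehead reduced} (via Lemmas~\ref{lem:simple} and~\ref{lem:whitehead reduced}, neither of which invokes the proposition) is sound, so no gap remains.
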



\section{Proof of Theorem~\ref{thm:bounded projections} for simple elements}\label{sec:simple}

Combining the statements from the previous section, we can prove Theorem~\ref{thm:bounded projections} for simple elements.

\begin{proposition}\label{prop:length bounded simple}
Let $(G,\calA)$ be a non-sporadic torsion-free free product.  For all $L > 0$, there is a $D_0 > 0$ such that if $g \in G$ is simple, then the diameter of $\pi(\calO_L(g)) \subset \ZF$ is at most $D_0$.
\end{proposition}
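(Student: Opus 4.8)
The plan is to single out a coarsely central target in $\ZF$, playing the role that the fixed simple curve $\gamma$ plays in the surface argument, and to bound the distance from each $\pi(T)$, $T \in \calO_L(g)$, to this target uniformly in $g$. Let $\calE_g \subset \ZF$ be the set of (equivalence classes of) free splittings of $(G,\calA)$ in which $g$ is elliptic. Because $g$ is simple, $\calE_g \neq \emptyset$. Moreover $g$ is non-peripheral and elliptic in each member of $\calE_g$, so any two elements of $\calE_g$ are joined by an edge of $\ZF$ via condition (2) of Definition~\ref{def:zf}; hence the diameter of $\calE_g$ in $\ZF$ is at most $1$. (Note that $\pi(T) \notin \calE_g$, since $g$ acts hyperbolically on every Grushko tree, so the entire content lies in bounding the distance from $\pi(T)$ to $\calE_g$.)

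The main step is to show that for every $T \in \calO_L(g)$ there exists $S \in \calE_g$ with $d(\pi(T),S) \leq L+1$. First I would apply Lemma~\ref{lem:whitehead reduced} to the periodic line collection $\calL_g$. Since $\abs{\calL_g}_T = \abs{g}_T \leq L$, this produces a Grushko tree $T' \in \calO$ with $d(\pi(T),\pi(T')) \leq L$ for which either (1) $T'$ has an edge crossed by no line of $\calL_g$, or (2) $T'$ is Whitehead reduced for $g$. The crux is to exclude alternative (2): since $g$ is simple, Proposition~\ref{prop:whitehead criterion} forces the Grushko tree $T'$ to have some vertex $v$ at which $\Wh_{T'}(\calL_g,v)$ has an admissible cut, directly contradicting the definition of Whitehead reduced (Definition~\ref{def:whitehead reduced}). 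Thus alternative (1) holds, and recalling that the lines of $\calL_g$ are precisely the axes $aT'_g$, Lemma~\ref{lem:simple}\eqref{item:uncrossed edge simple} applies to $T'$ to yield a free splitting $S$ in which $g$ is elliptic (so $S \in \calE_g$) with $d(\pi(T'),S) \leq 1$. The triangle inequality gives $d(\pi(T),S) \leq L+1$.

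Assembling the pieces: given $T_1,T_2 \in \calO_L(g)$, choose $S_1,S_2 \in \calE_g$ as above, so that
\[
d(\pi(T_1),\pi(T_2)) \leq d(\pi(T_1),S_1) + d(S_1,S_2) + d(S_2,\pi(T_2)) \leq (L+1)+1+(L+1) = 2L+3,
\]
using that the diameter of $\calE_g$ is at most $1$. Hence $D_0 = 2L+3$ works (the sharper value $2L+1$ quoted in the introduction should follow with marginally more careful bookkeeping in the reduction of Lemma~\ref{lem:whitehead reduced}). Crucially, every constant here depends only on $L$ and not on the particular simple element $g$, which is exactly what the statement demands.

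The one genuinely nontrivial point is the exclusion of the Whitehead-reduced alternative, and this is precisely where the simplicity hypothesis enters, through the Whitehead cut-vertex criterion of Proposition~\ref{prop:whitehead criterion}. Everything else is the bookkeeping that converts the combinatorial length bound $\abs{g}_T \leq L$ into a metric bound in $\ZF$ by chaining Lemmas~\ref{lem:whitehead reduced} and~\ref{lem:simple}, together with the elementary observation that the free splittings in which $g$ is elliptic form a set of diameter at most $1$.
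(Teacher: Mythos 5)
Your proof is correct and follows essentially the same route as the paper's: apply Lemma~\ref{lem:whitehead reduced} to $\calL_g$, use Proposition~\ref{prop:whitehead criterion} to rule out the Whitehead-reduced alternative, invoke Lemma~\ref{lem:simple} to land within distance $L+1$ of a free splitting in which $g$ is elliptic, and use the fact that any two such splittings are at distance at most $1$ to get $D_0 = 2L+3$, the same constant as in the paper. The only difference is cosmetic (packaging the free splittings where $g$ is elliptic as a named set $\calE_g$), so there is nothing to fix.
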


\begin{proof}
Given $L$, we set $D_0 = 2L + 3$.

Let $g \in G$ be a simple element and consider a Grushko tree $T_0 \in \calO_L(g)$.  As $g$ is simple, by Proposition~\ref{prop:whitehead criterion}, the second option of Lemma~\ref{lem:whitehead reduced} cannot occur.  Hence there is a Grushko tree $T \in \calO$ where $d(\pi(T_0),\pi(T)) \leq L$ and an edge $e \subset T$ that is not crossed by $aT_g$ for any $g \in G$.  By Lemma~\ref{lem:simple}, we obtain a free splitting $S_0 \in \ZF$ in which $g$ is elliptic and $d(\pi(T),S_0)\leq 1$.  Hence $d(\pi(T_0),S_0) \leq L + 1$.

Given any other Grushko tree $T_1 \in \calO_L(g)$, repeating the argument from above, we have that there is a free splitting $S_1 \in \ZF$ in which $g$ is elliptic and $d(\pi(T_1),S_1) \leq L + 1$. 

As $g$ is elliptic in both $S_0$ and $S_1$, we have that $d(S_0,S_1) \leq 1$.  Therefore, we find:
\begin{equation*}
d(\pi(T_0),\pi(T_1)) \leq d(\pi(T_0),S_0) + d(S_0,S_1) + d(S_1,\pi(T_1)) \leq 2L + 3 = D_0.\qedhere
\end{equation*} 
\end{proof}

\begin{remark}\label{rem:free factor bounded}
The above proof also shows that the diameter of the projection of $\calO_L(g)$ to the free factor graph $\FF$ is also bounded by $2L + 3$.  Indeed, one model of the free factor graph (denoted $\FF_2$ by Guirardel--Horbez~\cite[Definition~2.3]{ar:GH22}) is defined as the graph with vertex set the $\calZ$--splittings of $(G,\calA)$ where two are joined by an edge if they are compatible or if there is a non-peripheral \emph{simple} element $g \in G$ that is elliptic in both.  As the element $g$ in Proposition~\ref{prop:length bounded simple} is simple we have that the distance in the free factor graph between the $\calZ$--splittings $S_0$ and $S_1$ from above is also at most 1.  In the case where $\calA = \emptyset$, this was observed by Bestvina--Feighn with an argument that is similar to the one above~\cite[Lemma~3.3]{ar:BF14}.  
\end{remark}


\section{Connectivity properties of the decomposition space}\label{sec:connect}

To extend Proposition~\ref{prop:length bounded simple} to $\calZ$--simple elements that are not simple, we need to analyze the decomposition space $\calD(\calL)$ and its relation to the Whitehead graph $\Wh_T(\calL,v)$.  
Again, let $(G,\calA)$ be a fixed non-sporadic torsion-free free product.  

\subsection{Connectivity}\label{subsec:connectivity}

To begin, we observe that the Whitehead graph can detect if the decomposition space is disconnected.  Suppose that $Y$ is a direction at $p$ for some point $p$.  Recall that we denote by $\bd_0 Y$ the point $p$.  Further, we denote $\bd Y = \bd T \cap \overline{Y} \subset \bd(G,\calA)$ where $\overline{Y} \subset \hT_{\rm obs}$ is the closure.

\begin{lemma}\label{lem:decomposition disconnected}
If $\calL$ is a periodic line collection and $\Wh_T(\calL,v)$ has a type i admissible cut for some Grushko tree $T \in \calO$ and vertex $v \in T$, then $\calD(\calL)$ is disconnected.
\end{lemma}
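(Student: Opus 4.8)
The plan is to reduce to the situation of a single edge that no line crosses, and then to exhibit an explicit separation of $\bd(G,\calA)$ that survives in the quotient. Since the boundary $\bd(G,\calA)$, together with its decomposition into the subsets $V_\infty$ and $\bd_\infty$, is canonically independent of the chosen Grushko tree (Section~\ref{subsec:boundaries}), I am free to replace $T$ by a more convenient tree before forming $\calD(\calL)$. First I would apply Lemma~\ref{lem:whitehead}\eqref{item:type i expand} to the given type i admissible cut: it produces a Grushko tree $T' \in \calO$ and a \emph{nontrivial} collapse $c \from T' \to T$ whose restriction to $\ell_{T'}$ is injective for every line $\ell \in \calL$. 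Because $c$ is nontrivial it collapses some edge $e \subset T'$ to a point, and because $c$ is injective on each axis $\ell_{T'}$, no axis $\ell_{T'}$ can cross $e$: crossing $e$ would force the two distinct endpoints of $e$, both lying on $\ell_{T'}$, to have the same image. Thus I obtain an edge $e$ in $T'$ not crossed by $\ell_{T'}$ for any $\ell \in \calL$.

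Next I would separate the boundary using the midpoint $p$ of $e$. The set $\hT'_{\rm obs} \setminus \{p\}$ is the disjoint union of the two directions $Y^+$ and $Y^-$ at $p$, each of which is a basic open set in the observers topology and hence clopen in $\hT'_{\rm obs} \setminus \{p\}$. Since $p$ is an interior point of an edge, it lies in neither $V_\infty(T')$ nor $\bd_\infty T'$, so $p \notin \bd(G,\calA)$ and therefore $\bd(G,\calA) = \bd Y^+ \sqcup \bd Y^-$, where $\bd Y^\pm = \bd(G,\calA) \cap Y^\pm$ are clopen in $\bd(G,\calA)$. Both pieces are nonempty: minimality of the $G$--action forces $e$ to lie on the axis of some hyperbolic element, and the two ends of that axis land one in $\bd Y^+$ and one in $\bd Y^-$.

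It remains to push this separation through the quotient map $q \from \bd(G,\calA) \to \calD(\calL)$. The nontrivial classes of the identification defining $\calD(\calL)$ are precisely the two-point sets $\ell \in \calL$, whose two points are the endpoints of the axis $\ell_{T'}$. Since no $\ell \in \calL$ crosses $e$, each such $\ell_{T'}$ avoids $p$, so both endpoints of $\ell$ lie in the same $\bd Y^\pm$. Hence every equivalence class is contained in $\bd Y^+$ or in $\bd Y^-$, so both sets are saturated and $q^{-1}(q(\bd Y^\pm)) = \bd Y^\pm$. As $q$ is a quotient map, the images $q(\bd Y^+)$ and $q(\bd Y^-)$ are then clopen, nonempty, and disjoint, and they cover $\calD(\calL)$; therefore $\calD(\calL)$ is disconnected.

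The two structural inputs carrying the argument are that the type i cut yields an uncrossed edge (via Lemma~\ref{lem:whitehead}) and that ``no line crosses $e$'' forces the nontrivial equivalence classes to respect the partition $\bd Y^+ \sqcup \bd Y^-$. The step I would treat most carefully — and the main obstacle — is verifying nonemptiness and clopenness of $\bd Y^\pm$ directly in the observers topology: in particular confirming $p \notin \bd(G,\calA)$ so that the two directions genuinely partition the boundary, and invoking minimality to supply a crossing axis so that neither side is empty.
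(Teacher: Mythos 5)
Your proof is correct and takes essentially the same route as the paper's: apply Lemma~\ref{lem:whitehead}\eqref{item:type i expand} to obtain a tree $T'$ with an edge $e$ crossed by no $\ell_{T'}$, then observe that the two directions at the midpoint of $e$ give a saturated partition of $\bd(G,\calA)$ into nonempty clopen sets that descends to a separation of $\calD(\calL)$. The only difference is that you spell out two details the paper leaves implicit --- that injectivity of the collapse on axes forces the collapsed edge to be uncrossed, and that minimality of the action makes both sets $\bd Y^{\pm}$ nonempty --- both of which are verified correctly.
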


\begin{proof}
As $\Wh_T(\calL,v)$ admits a type i admissible cut, by Lemma~\ref{lem:whitehead}\eqref{item:type i expand}, there is a Grushko tree $T' \in \calO$ that collapses to $T$ in which some edge $e \subset T'$ is not crossed by $\ell_{T'}$ for any line $\ell \in \calL$.  Let $p \in T'$ be the midpoint of $e$, and denote the directions at $p$ by $Y$ and $Y'$.  As these sets are disjoint, the sets $\bd Y, \bd Y' \subset \bd(G,\calA)$ are nonempty disjoint clopen sets.  Since $\ell_{T'}$ is disjoint from $e$ for all lines $\ell \in \calL$ we see that no line $\ell \in \calL$ has a point in both $\bd Y$ and $\bd Y'$.  Thus the images of the sets $\bd Y$ and $\bd Y'$ remain disjoint in $\calD(\calL)$ and also remain clopen.  Hence $\calD(\calL)$ is disconnected.
\end{proof}

We seek to prove a converse to this statement and hence to give a characterization of when the decomposition space $\calD(\calL)$ is connected.  

\begin{lemma}\label{lem:decomposition connected subset}
Let $\calL$ be a periodic line collection.  Suppose that $T \in \calO$ is a Grushko tree that is Whitehead reduced for $\calL$.  Consider an edge $e$ in $T$ and let $x$ be the midpoint of $e$.  If $Y \subset T - \{x\}$ is a direction at $x$, then the subset $q(\bd Y) \subset \calD(\calL)$ is connected.
\end{lemma}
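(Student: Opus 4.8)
The plan is to argue by contradiction, converting a hypothetical disconnection of $q(\bd Y)$ into an admissible cut of some Whitehead graph, thereby contradicting that $T$ is Whitehead reduced for $\calL$. First I would record a topological reduction. Since each point of $\bd(G,\calA)$ is an endpoint of at most one line of $\calL$ (two loxodromics sharing a fixed point generate a $2$--ended subgroup and so determine the same line), the map $q$ merely glues the two endpoints of each line, and $V_\infty(T)$--points are never glued. As $\bd Y$ is compact (it is closed in the compact space $\hT_{\rm obs}$) and $\calD(\calL)$ is compact Hausdorff by Lemma~\ref{lem:boundary}, the continuous bijection from $\bd Y$ modulo the relation gluing the endpoints of those $\ell\in\calL$ having both endpoints in $\bd Y$ onto $q(\bd Y)$ is a homeomorphism. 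Hence a disconnection of $q(\bd Y)$ is exactly a partition $\bd Y=\widetilde P\sqcup\widetilde Q$ into nonempty clopen sets such that no line of $\calL$ with both endpoints in $\bd Y$ meets both $\widetilde P$ and $\widetilde Q$.

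Next I would locate the cut inside the tree. For a vertex $v\in Y$ let $B_v\subset\bd Y$ consist of the boundary points whose geodesic ray from $x$ passes through $v$, together with $v$ itself when $v\in V_\infty(T)$, and call $v$ \emph{separating} if $B_v$ meets both $\widetilde P$ and $\widetilde Q$. The endpoint of $e$ lying in $Y$ is separating, and separating vertices are closed under moving toward $x$, so they form a subtree. An infinite separating ray would force its limiting endpoint into $\overline{\widetilde P}\cap\overline{\widetilde Q}=\varnothing$, which is impossible; hence this subtree is finite and possesses an outermost separating vertex $v$.

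The heart of the argument is then to extract an admissible cut at $v$. By outermost-ness, for each direction $Z$ at $v$ pointing into $Y$ the set $\bd Z\setminus\{v\}$ is entirely in $\widetilde P$ or entirely in $\widetilde Q$, and both labels occur; the no-straddling condition forbids any line from crossing $v$ between a $\widetilde P$--direction and a $\widetilde Q$--direction, while lines running into the direction $Z_0$ at $v$ toward $x$ exit $Y$ across $e$ and are therefore unconstrained. When $\Stab_T(v)$ is trivial the Whitehead vertices are the finitely many directions at $v$: the absence of a type~i cut makes $\Wh_T(\calL,v)$ connected, and as there are no $P$--$Q$ edges the connectivity must route through $[Z_0]$, exhibiting $[Z_0]$ as a cut vertex and hence a type~ii admissible cut (with monodromy $\{1\}$), contradicting that $T$ is Whitehead reduced. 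This trivial-stabilizer case is the model for the whole scheme, which parallels the free-group arguments of Otal and Cashen--Macura.

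I expect the genuine obstacle to be an infinite-stabilizer vertex $v\in V_\infty(T)$, where the directions at $v$ fall into finitely many $\Stab_T(v)$--orbits (the Whitehead vertices) while the clopen partition need not be $\Stab_T(v)$--equivariant. Here I would use the monodromy: the absence of a type~i cut forces $\Mon(\Wh_T(\calL,v))=\Stab_T(v)$, and since the $\widetilde P/\widetilde Q$ label is constant along each line (no straddling) it is constant around every closed path of the Whitehead graph, so every orbit of directions is monochromatic. As orbits are infinite, whereas $v$ being a limit of points spread across infinitely many distinct directions forces the piece not containing $v$ to meet only finitely many directions, this is contradictory, and unwinding it once more produces a type~i or type~ii admissible cut at $v$. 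Reconciling this non-equivariant separation with the orbit structure through the monodromy subgroup is the delicate point, and it is precisely the behavior that the monodromy conditions in Definition~\ref{def:admissible cut} are designed to capture.
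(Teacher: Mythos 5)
Your overall architecture (argue by contradiction, reduce a disconnection of $q(\bd Y)$ to a clopen partition $\bd Y = \widetilde P \sqcup \widetilde Q$ with no straddling line, and convert that into an admissible cut at a well-chosen vertex) runs parallel to the paper's proof, and both your topological reduction and your trivial-stabilizer case are essentially sound. But the infinite-stabilizer case --- which you yourself flag as the crux --- rests on a false claim. The absence of a type~i admissible cut does \emph{not} force $\Mon(\Wh_T(\calL,v)) = \Stab_T(v)$: Definition~\ref{def:admissible cut} only excludes a connected piece with \emph{trivial} monodromy, so Whitehead-reducedness merely makes each component's monodromy nontrivial, possibly a proper subgroup. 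The paper's own Example~\ref{ex:whitehead graph} is a counterexample to your claim: there the Whitehead graph is a $4$--cycle with $\Mon(\Wh_T(\calL_g,v)) = \I{a^4} \lneq \I{a} = \Stab_T(v)$, yet there is no admissible cut; by Definition~\ref{def:peripheral cut} non-full monodromy just means $q(v)$ is a peripheral cut point, and the lemma must still hold in that situation. Your second step fails independently: the $\widetilde P/\widetilde Q$ label is \emph{not} constant around closed paths of $\Wh_T(\calL,v)$, because the vertices of that graph are $\Stab_T(v)$--orbits of directions and the partition is not equivariant --- an element $b \in \Stab_T(v)$ fixes $v$ but moves $x$, hence does not preserve $\bd Y$ or the labels --- and moreover closed paths may pass through the class of the direction toward $x$, whose lines can exit $Y$ across $e$ and are unconstrained. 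So ``monochromatic orbits'' cannot be deduced, and the concluding contradiction collapses.

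The paper closes exactly this hole with a mechanism your sketch lacks: since the two sides of the partition are compact and clopen, membership is determined by the parity of crossings of a \emph{finite} set $P$ of edge midpoints, and the proof is an induction that shrinks $P$ at an outermost point. At a vertex with infinite stabilizer, any closed path in the one-sided subgraph $U$ spanned by the directions meeting $P$ has label $a$ satisfying $a^n q_1 \in P$ for all $n \geq 0$ (by concatenating $a$--translates of the lifted path inside $T_v(\calL)$, where the relevant invariant is membership in $P$ rather than the non-equivariant labeling); finiteness of $P$ forces $a = 1$, so $\Mon(U) = \{1\}$ and a type~i or type~ii admissible cut appears --- contradiction. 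Your compactness observation that only finitely many directions at $v$ can meet the piece not containing $v$ is correct and in the same spirit, but by itself it yields only finiteness, not triviality of the monodromy; you need the finite determining set (or an equivalent quantitative encoding of the clopen partition) to transport information along translates. A smaller issue: your finiteness of the separating subtree tacitly assumes it is locally finite before extracting an infinite ray; this is true, but requires the same observers-topology limit argument applied at a vertex (points chosen in infinitely many distinct separating directions accumulate at that vertex, forcing it into $\overline{\widetilde P} \cap \overline{\widetilde Q} = \emptyset$).
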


\begin{proof}
Suppose there are open sets $A,B \in \calD(\calL)$ such that $q(\bd Y) \subseteq A \cup B$ and $A \cap B \cap q(\bd Y) = \emptyset$.  We will prove the lemma by showing that either $A \cap q(\bd Y) = \emptyset$ or $B \cap q(\bd Y) = \emptyset$.

We observe that the sets:
\begin{align*}
Y_A &= \bd Y \cap q^{-1}(A) = \bd Y \cap q^{-1}(\calD(\calL) - B), \\ 
Y_B &= \bd Y \cap q^{-1}(B) = \bd Y \cap q^{-1}(\calD(\calL) - A)
\end{align*} 
are both compact clopen sets in $\bd(G,\calA)$.  As the intersection $A \cap B \cap q(\bd Y)$ is empty, there is no line in $\calL$ with one point in $Y_A$ and the other point in $Y_B$.    

As $Y_A$ and $Y_B$ are compact clopen sets, there are finitely many edges in $T$, with midpoints $P = \{p_1,\ldots,p_m\}$, such that for a point $\xi \in \bd Y$, membership in $Y_A$ or $Y_B$ is determined by the parity of the intersection of the set $P$ with the ray $r$ representing $\xi$ based at $x$.  In this case, we say that the set $P$ \emph{determines} the partition $Y_A \sqcup Y_B$ of $\bd Y$.  Observe that there are many possible sets which can determine the same partition of $\bd Y$.

If $P$ is non-empty, we will describe a new set of points $P'$ that determines the same partition $Y_A \sqcup Y_B$ of $\bd Y$ such that $P'$ has strictly fewer points than $P$.  Repeating this process a finite number of times, we see that the partition $Y_A \sqcup Y_B$ can be determined by the empty set.  Hence either $Y_A$ or $Y_B$ is empty, proving that $q(\bd Y)$ is connected.

To this end, fix a point $p \in P$ that maximizes the distance to $x$, let $v$ be the vertex incident to the edge containing $p$ that is closest to $x$ and let $e'$ be the edge incident to $v$ in the direction at $v$ that contains $x$.  Let $q'$ be the midpoint of $e'$ and enumerate the midpoints of the edges incident to $v$ other than $e'$ by $q_1 = p,\ldots,q_i,\ldots$.  Note the possibility that $q' = x$.  There is an integer $M$ such that, after possibly reordering the $q_i$'s, we have that $q_i \in P$ if $1 \leq i \leq M$ and $q_i \notin P$ otherwise.  Without loss of generality, we can say that a geodesic ray originating at $x$ that contains $q_i$ for some $1 \leq i \leq M$ represents a point in $Y_A$ while a geodesic ray originating at $x$ that contains $q_i$ for some $i > M$ represents a point in $Y_B$.  (This uses the fact the $p$ maximizes the distance to $x$.) In particular, there is no line $\ell \in \calL$ such that $\ell_T$ contains that points $q_i$ and $q_j$ where $1\leq i \leq M$ and $j > M$.  Let $Y' \subset T - \{v\}$ be the direction that contains $q'$ and let $Y_i \subset  T - \{v\}$ be the direction that contains $q_i$ for $i \geq 1$.  As vertices in the Whitehead graph $\Wh_T(\calL,v)$, we see that there are no edges from $[Y_i]$ to $[Y_j]$ where $1 \leq i \leq M$ and $j > M$.

See Figure~\ref{fig:decomposition connected subset} for the set-up.

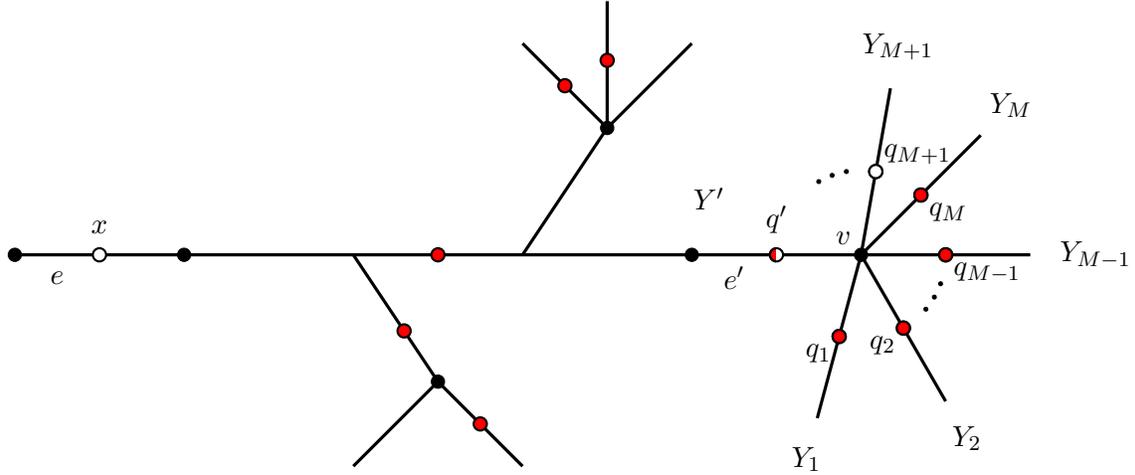
\begin{figure}[ht]
\begin{tikzpicture}
\def\ee{2.25}
\def\len{5}
\filldraw (0,0) circle [radius=2.5pt];
\draw[very thick] (0,0) -- (\len*\ee,0);
\draw[thick,fill=white] (0.5*\ee,0) circle [radius=2.5pt] node[label=above:{$x$}] {};
\draw[thick,fill=red] (\len*\ee-0.5*\ee,0) circle [radius=2.5pt] node[label=above:{$q'$}] {};
\fill[white] (\len*\ee-0.5*\ee-0.005,0.1) rectangle (\len*\ee-0.5*\ee+0.1,-0.1);
\draw[thick] (\len*\ee-0.5*\ee,0) circle [radius=2.5pt];
\filldraw (\ee,0) circle [radius=2.5pt];
\filldraw (\len*\ee-\ee,0) circle [radius=2.5pt];
\filldraw (\len*\ee,0) circle [radius=2.5pt] node[above left] {$v$};
\node at (0.25*\ee,-0.3) {$e$};
\node at (\len*\ee - 0.75*\ee,-0.3) {$e'$};
\begin{scope}[xshift=\len*\ee cm]
\foreach \x\y in {-15,30,90,135,170} {
	\draw[very thick,rotate=\x] (0,0) -- (0,-\ee);
	\draw[thick,fill=white,rotate=\x] (0,-0.5*\ee) circle [radius=2.5pt];
}
\draw[thick,fill=red,rotate=-15] (0,-0.5*\ee) circle [radius=2.5pt] node[below left=-0.05] {$q_1$};
\fill[white,rotate=-15] (0,-1.25*\ee) circle [radius=0.5pt] node[black] {$Y_1$};
\draw[thick,fill=red,rotate=30] (0,-0.5*\ee) circle [radius=2.5pt] node[below left=-0.05] {$q_2$};
\fill[white,rotate=30] (0,-1.25*\ee) circle [radius=0.5pt] node[black] {$Y_2$};
\draw[thick,fill=red,rotate=90] (0,-0.5*\ee) circle [radius=2.5pt] node[below right=-0.05] {$q_{M-1}$};
\fill[white,rotate=90] (0,-1.25*\ee) circle [radius=0.5pt] node[black,right=-0.3] {$Y_{M-1}$};
\draw[thick,fill=red,rotate=135] (0,-0.5*\ee) circle [radius=2.5pt] node[below right=-0.05] {$q_M$};
\fill[white,rotate=135] (0,-1.25*\ee) circle [radius=0.5pt] node[black] {$Y_M$};
\draw[thick,fill=white,rotate=170] (0,-0.5*\ee) circle [radius=2.5pt] node[above right=-0.05] {$q_{M+1}$};
\fill[white,rotate=170] (0,-1.25*\ee) circle [radius=0.5pt] node[black] {$Y_{M+1}$};
\foreach \x in {50,60,70,190,200,210} {
	\filldraw[rotate=\x] (0,-0.5*\ee) circle [radius=0.75pt];
}
\end{scope}
\draw[very thick] (2*\ee,0) -- (2.5*\ee,-0.75*\ee) -- (3*\ee,-1.25*\ee);
\draw[very thick] (2.5*\ee,-0.75*\ee) -- (2*\ee,-1.25*\ee);
\filldraw (2.5*\ee,-0.75*\ee) circle [radius=2.5pt];
\draw[thick,fill=red] (2.3*\ee,-0.45*\ee) circle [radius=2.5pt];
\draw[thick,fill=red] (2.75*\ee,-\ee) circle [radius=2.5pt];
\draw[thick,fill=red] (2.5*\ee,0) circle [radius=2.5pt];
\draw[very thick] (3*\ee,0) -- (3.5*\ee,0.75*\ee) -- (4*\ee,1.25*\ee);
\draw[very thick] (3*\ee,1.25*\ee) -- (3.5*\ee,0.75*\ee) -- (3.5*\ee,1.5*\ee);
\filldraw (3.5*\ee,0.75*\ee) circle [radius=2.5pt];
\draw[thick,fill=red] (3.25*\ee,\ee) circle [radius=2.5pt];
\draw[thick,fill=red] (3.5*\ee,1.15*\ee) circle [radius=2.5pt];
\node at (4.1*\ee,0.75) {$Y'$};
\end{tikzpicture}
\caption{The set-up in the proof of Lemma~\ref{lem:decomposition connected subset}.  Vertices of $T$ are filled in black, points that below in $P$ are filled in red and other midpoints of edges are filled in white.  The point $q'$ may or not not belong to $P$.}\label{fig:decomposition connected subset}
\end{figure}

First suppose that $\Stab_T(v) = \{1\}$ and let $\delta$ be the valence of $v$.   If $M < \delta-1$, then we see that $\Wh_T(\calL,v)$ has an admissible cut, contrary to our hypotheses.  Indeed, either $\Wh_T(\calL,v)$ is disconnected or $[Y']$ is a cut vertex.  Hence we have that $M = \delta-1$ and so each of the $q_i$'s belong to $P$.  In this case, let $P'$ be the symmetric difference between $P$ and the set $\{q',q_1,\ldots,q_M\} - \{x\}$.  In other words, we replace $q_1,\ldots,q_M$ by $q'$ if $q' \notin P$ and $q' \neq x$, and remove $q',q_1,\ldots,q_M$ otherwise.

We are now left with the case that $\#|\Stab_T(v)| = \infty$.  We will argue that this case cannot occur.  Let $Y_{i_1},\ldots,Y_{i_m}$ be a sequence of directions at $v$ where:
\begin{enumerate}
\item there is a line $\ell_j \in \calL$ such that $(\ell_j)_T$ contains $q_{i_j}$ and $q_{i_{j+1}}$ for $1 \leq j < m$, and
\item $Y_{i_1} = Y_1$ and $Y_{i_m} = aY_1$ for some $a \in \Stab_T(v)$.
\end{enumerate} 
Such a sequence gives rise to a closed path in $\Wh_T(\calL,v)$ based at $[Y_1]$.  Moreover, the product of the labels along this path in $\Wh_T(\calL,v)$ is $a$.  

\medskip \noindent {\it Claim.} The element $a$ is trivial.

\begin{proof}[Proof of Claim]
We observe that $1 \leq i_j \leq M$ for all $1 \leq j \leq m$ as there is no line $\ell \in \calL$ such that $\ell_T$ contains the points $q_i$ and $q_j$ where $1\leq i \leq M$ and $j > M$.  In particular, $aq_1 \in P$.  We can concatenate the original sequence with $aY_1, aY_{i_2},\ldots,aY_{i_m} = a^2Y_1$ to observe that $a^2q_1 \in P$ also.  Repeating in this manner we have that $a^nq_1 \in P$ for all $n \geq 0$.  As $P$ is a finite set, we must have that $a$ is trivial as claimed.
\end{proof}
   
This implies that $\Wh_T(\calL,v)$ has an admissible cut, contrary to our hypothesis.  Indeed, let $U$ be the connected component of the subgraph spanned by the vertices $[Y_1],\ldots,[Y_M]$ that contains the vertex $[Y_1]$.  By the claim we have that $\Mon(U) = \{1\}$.  If $U$ is a connected component of $\Wh_T(\calL,v)$, then there is a type i admissible cut.  Else, let $U'$ be the subgraph spanned by $U$ and $[Y']$.  Arguing as in the claim, we can also see that $\Mon(U') = \{1\}$.  In this case it is now clear that there is a type ii admissible cut.           
\end{proof}

We can now state our converse to Lemma~\ref{lem:decomposition disconnected}.

\begin{proposition}\label{prop:decomposition connected}
Let $\calL$ be a periodic line collection.  The following are equivalent.
\begin{enumerate}
\item\label{item:whitehead reduced} There exists a Grushko tree $T \in \calO$ that is Whitehead reduced for $\calL$.

\item\label{item:connected} The decomposition space $\calD(\calL)$ is connected.
\end{enumerate}
\end{proposition}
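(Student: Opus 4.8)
The plan is to prove the two directions separately, with the nontrivial direction being \eqref{item:whitehead reduced} $\Rightarrow$ \eqref{item:connected}, since the reverse is essentially the contrapositive of Lemma~\ref{lem:decomposition disconnected}. For \eqref{item:connected} $\Rightarrow$ \eqref{item:whitehead reduced}, I would argue by contradiction: suppose no Whitehead reduced tree exists for $\calL$. Then starting from any Grushko tree and applying Lemma~\ref{lem:whitehead reduced} repeatedly, either I eventually produce an edge not crossed by any line of $\calL$, or I keep finding admissible cuts. The first case already forces disconnectivity of $\calD(\calL)$ by the argument in Lemma~\ref{lem:decomposition disconnected} (an uncrossed edge separates the boundary into two clopen pieces whose images stay disjoint in the quotient). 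For the remaining case, I would note that if every tree admits an admissible cut, then in particular some tree admits a \emph{type~i} admissible cut, since type~ii cuts correspond to folds that strictly increase the number of edge orbits (bounded by the complexity $\xi(G,\calA)$), so the process cannot consist solely of type~ii moves indefinitely and must either terminate at an uncrossed edge or encounter a type~i cut; then Lemma~\ref{lem:decomposition disconnected} applies to give disconnectivity, contradicting \eqref{item:connected}.

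For the main direction \eqref{item:whitehead reduced} $\Rightarrow$ \eqref{item:connected}, I would fix a Whitehead reduced tree $T$ and show $\calD(\calL)$ is connected by a covering/overlapping argument built on Lemma~\ref{lem:decomposition connected subset}. The key observation is that Lemma~\ref{lem:decomposition connected subset} already provides, for each edge $e$ with midpoint $x$ and each direction $Y$ at $x$, that the piece $q(\bd Y) \subset \calD(\calL)$ is connected. The whole space $\calD(\calL)$ is the image $q(\bd(G,\calA)) = q(\bd Y \cup \bd Y') = q(\bd Y) \cup q(\bd Y')$ for the two directions $Y, Y'$ at any single midpoint $x$. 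So I have written $\calD(\calL)$ as a union of two connected subsets, and it remains to show these two connected pieces have intersecting images (or more generally can be chained through overlapping connected pieces). The natural mechanism: since $T$ is Whitehead reduced, $\Wh_T(\calL,x)$ does not admit an admissible cut, which at a non-vertex midpoint $x$ (where the Whitehead graph has exactly two vertices) means the two directions are joined by an edge, i.e.\ some line $\ell \in \calL$ has $\ell_T$ crossing $e$ and hence has one endpoint in $\bd Y$ and the other in $\bd Y'$. That single line gets collapsed by $q$ to a point lying in both $q(\bd Y)$ and $q(\bd Y')$, so the two connected pieces share a point and their union is connected.

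The \textbf{main obstacle} I anticipate is handling the possibility that \emph{no} line of $\calL$ crosses a particular edge $e$ — but this is precisely the case ruled out by the Whitehead reduced hypothesis via the remark following Definition~\ref{def:admissible cut} and Example discussion: for a non-vertex point $p$, the Whitehead graph $\Wh_T(\calL,p)$ is disconnected exactly when no geodesic in $\calL$ contains $p$, and disconnection of a two-vertex graph with trivial stabilizer is a type~i admissible cut. So if $T$ is Whitehead reduced, every edge midpoint $x$ lies on some line of $\calL$, guaranteeing the overlap above. I would package this into the following clean argument: for every edge $e$ of $T$ the two boundary pieces $q(\bd Y)$ and $q(\bd Y')$ overlap, and since $\bd(G,\calA)$ is connected through the tree structure of $T$ (any two directions are reachable by crossing finitely many edges), I can chain the connected pieces $q(\bd Y)$ across successive edges to conclude $\calD(\calL)$ is connected. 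The subtle point to verify carefully is the base case that $q(\bd(G,\calA))$ is covered by these pieces and that the chaining across the tree produces a genuinely connected union rather than merely a path-connected-up-to-closure argument; I expect this to follow formally from the fact that $T$ has finitely many edge orbits and the pieces $q(\bd Y)$ for the full directions $Y$ at a single point already exhaust $\calD(\calL)$, so only a single overlap per splitting is needed.
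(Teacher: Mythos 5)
Your proposal is correct and takes essentially the same route as the paper: the forward direction applies Lemma~\ref{lem:decomposition connected subset} to the two directions at a single edge midpoint and gets the overlap from a line crossing that edge (guaranteed by the Whitehead reduced hypothesis, which the paper verifies via an isolated vertex in the Whitehead graph at a vertex incident to the edge, a cleaner fix than your midpoint formulation since Definition~\ref{def:whitehead reduced} quantifies only over vertices), while the converse is exactly the paper's one-line application of Lemma~\ref{lem:whitehead reduced} together with the proof of Lemma~\ref{lem:decomposition disconnected}. Your extra type~i/type~ii termination analysis is subsumed by the statement of Lemma~\ref{lem:whitehead reduced} (whose terminating bound is $\abs{\calL}_T$ rather than $\xi(G,\calA)$, since folds may create degree-two vertices), and the chaining over multiple edges is unnecessary, as you yourself observe, because the two pieces at one midpoint already exhaust $\calD(\calL)$.
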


\begin{proof}
Suppose that there exists a Grushko tree $T \in \calO$ that is Whitehead reduced for $\calL$. Fix an edge $e \subset T$, let $x$ be the midpoint of $e$ and denote the directions at $x$ by $Y$ and $Y'$  By Lemma~\ref{lem:decomposition connected subset}, both $q(\bd Y)$ and $q(\bd Y')$ are connected.  Further, there is a geodesic that crosses the edge $e$ as otherwise the Whitehead graph based at either of the vertices incident to $e$ has an isolated vertex and hence an admissible cut.  Thus $q(\bd Y) \cap q(\bd Y') \neq \emptyset$ and therefore $\calD_\calL = q(\bd Y) \cup q(\bd Y')$ is connected.  Hence \eqref{item:whitehead reduced} implies \eqref{item:connected}.

Next suppose that there does not exist a Grushko tree $T \in \calO$ that is Whitehead reduced for $\calL$.  By Lemma~\ref{lem:whitehead reduced}, there is a Grushko tree $T' \in \calO$ that contains an edge that is not crossed by any geodesic in $\calL$.  As the proof of Lemma~\ref{lem:decomposition disconnected} shows, this implies that $\calD(\calL)$ is disconnected.  Hence  \eqref{item:connected} implies \eqref{item:whitehead reduced}.       
\end{proof}

We obtain the following corollary of Proposition~\ref{prop:decomposition connected} using Corollary~\ref{co:whitehead reduced}.

\begin{corollary}\label{co:not simple}
Suppose $g \in G$ is a non-peripheral element.  Then $g$ is not simple if and only if $\calD(\calL_g)$ is connected.
\end{corollary}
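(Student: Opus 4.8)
The plan is to deduce this directly from Proposition~\ref{prop:decomposition connected} together with the simplicity criteria established earlier in the paper, so that the corollary becomes pure bookkeeping. Proposition~\ref{prop:decomposition connected} already tells us that $\calD(\calL_g)$ is connected if and only if there exists a Grushko tree in $\calO$ that is Whitehead reduced for $\calL_g$ (equivalently, for $g$). Hence the entire task reduces to showing that $g$ is not simple precisely when such a Whitehead reduced tree exists.

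For the forward implication, I would suppose $g$ is not simple and start with an arbitrary Grushko tree $T_0 \in \calO$. Since $g$ is non-peripheral it acts hyperbolically on $T_0$, so $L := \abs{g}_{T_0}$ is finite and $T_0 \in \calO_L(g)$. Corollary~\ref{co:whitehead reduced} then applies and produces a Grushko tree $T' \in \calO_L(g)$ that is Whitehead reduced for $g$. By the implication \eqref{item:whitehead reduced}$\Rightarrow$\eqref{item:connected} of Proposition~\ref{prop:decomposition connected}, this makes $\calD(\calL_g)$ connected.

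For the reverse implication I would argue the contrapositive: if $g$ is simple, then by the Whitehead criterion of Guirardel--Horbez (Proposition~\ref{prop:whitehead criterion}), every Grushko tree $T \in \calO$ has some vertex $v$ for which $\Wh_T(\calL_g,v)$ admits an admissible cut. By the definition of Whitehead reduced (Definition~\ref{def:whitehead reduced}), this says that \emph{no} Grushko tree is Whitehead reduced for $g$, so the implication \eqref{item:connected}$\Rightarrow$\eqref{item:whitehead reduced} of Proposition~\ref{prop:decomposition connected} forces $\calD(\calL_g)$ to be disconnected. Combining both implications yields the stated equivalence.

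There is no serious obstacle here: the substantive content has all been front-loaded into Proposition~\ref{prop:decomposition connected}, Corollary~\ref{co:whitehead reduced}, and Proposition~\ref{prop:whitehead criterion}, and this corollary merely threads them together. The only point requiring a moment's care is verifying the hypothesis $T \in \calO_L(g)$ of Corollary~\ref{co:whitehead reduced}, which is automatic since a non-peripheral element has finite combinatorial length on any Grushko tree.
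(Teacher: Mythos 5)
Your proposal is correct and takes essentially the same route the paper intends: the paper states this corollary without a written proof, citing exactly Proposition~\ref{prop:decomposition connected} and Corollary~\ref{co:whitehead reduced}, and your argument threads these together in the natural way, with Proposition~\ref{prop:whitehead criterion} supplying the (necessary, though unmentioned in the paper's one-line attribution) fact that a simple element admits no Whitehead reduced tree. Your check that $T_0 \in \calO_L(g)$ for $L = \abs{g}_{T_0}$ is the right small verification, so the proof is complete as written.
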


%
%
%

\subsection{Cut sets in decomposition spaces}\label{subsec:cut sets}

In this section we introduce the terminology needed to talk about cut sets in decomposition spaces and record how $\calZ$--splittings give rise to cut sets.

Let $D$ be a Peano continuum, i.e., a compact, connected, locally connected, metrizable space.  A finite set $P \subset D$ is a \emph{cut set} if $D - P$ is disconnected but $D - P'$ is connected for any proper subset $P' \subset P$.  We use the terms \emph{cut point}, respectively \emph{cut pair}, when $\#\abs{P} = 1$, or when $\#\abs{P} = 2$.  A \emph{local cut point} is a point $x \in D$ which is either a cut point or where $D - \{x\}$ is connected and has more than one end.  In the latter case, the \emph{valence} of $x$ is the number of ends of $D - \{x\}$, denoted $\val(x)$.  A cut pair $\{x,y\} \subset D$ is \emph{exact} if $\val(x) = \val(y) = \#\abs{\pi_0(D - \{x,y\})}$.  In other words, each component of $D - \{x,y\}$ has two ends, one that limits to $x$ and one that limits to $y$.  It is known that the valence of a point in an exact cut pair must be finite~\cite[Section~2.2]{ar:HH}.  The \emph{valence} of an exact cut pair is the valence of either of its two points.  We will use the terms \emph{bivalent}, or \emph{multivalent} respectively, to mean that the valence is equal to 2, or greater than or equal to 3.

An exact cut pair $\{y_0,y_1\} \subset D$ \emph{separates} an exact cut pair $\{x_0,x_1\} \subset D$, if $x_0$ and $x_1$ lie in different components of $D - \{y_0,y_1\}$.  It can be readily checked that separating is a symmetric notion among exact cut pairs~\cite[Lemma~2.3]{ar:Cashen16}. An exact cut pair $\{x_0,x_1\} \subset D$ is \emph{inseparable} if it is not separated by another exact cut pair.  As the notion of separating is symmetric, if $\{x_0,x_1\}, \{y_0,y_1\} \subset D$ are exact pairs and $\{x_0,x_1\}$ is inseparable, then we have that both $x_0$, $x_1$ belong to the same component of $D - \{y_0,y_1\}$ and $y_0$, $y_1$ belong to the same component of $D - \{x_0,x_1\}$.

Recently, Dasgupta--Hruska proved that the Bowditch boundary of a countable relatively hyperbolic group is a Peano continuum whenever it is connected~\cite[Theorem~1.1]{ar:DH}.  Hence the above discussion, by Lemma~\ref{lem:boundary}, applies to a connected decomposition space $\calD(\calL)$.  A \emph{loxodromic cut pair} in a decomposition space $\calD(\calL)$ is a cut pair of the form $P = \{q(a^\infty),q(a^{-\infty})\}$ where $a$ is a non-peripheral element of $(G,\calA \cup \calN_\calL)$.  We remark that loxodromic cut pairs are always exact~\cite[Lemma~4.1]{ar:HH}.

\begin{lemma}\label{lem:cut set}
Suppose that $g \in G$ is a $\calZ$--simple element that is not simple.  Then $\calD(\calL_g)$ contains a loxodromic cut pair.
\end{lemma}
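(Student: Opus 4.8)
The plan is to locate inside a $\calZ$--splitting in which $g$ is elliptic an edge whose stabilizer is an infinite cyclic group $\I{a}$, and then to show that the two fixed points of $a$ furnish the desired loxodromic cut pair. Since $g$ is $\calZ$--simple, I would fix a $\calZ$--splitting $S$ in which $g$ is elliptic. If every edge of $S$ had trivial stabilizer, then $S$ would be a free--splitting and $g$ would be simple; as $g$ is not simple, $S$ has an edge $e$ whose stabilizer is, by the definition of a $\calZ$--splitting, a non-peripheral cyclic group $\I{a}$. By Lemma~\ref{lem:edge stabs in Z-splittings}, $a$ is simple. I would then check that $a$ is non-peripheral for the enlarged structure $(G,\calA\cup\calN_g)$: if $a$ were conjugate into $N_g=\I{g'}$, then $a$ simple would force a power of $g'$, and hence $g$, to be simple, contradicting the hypothesis. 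Thus $a$ is a loxodromic element of $(G,\calA\cup\calN_g)$, and by Lemma~\ref{lem:boundary} the set $\{q(a^\infty),q(a^{-\infty})\}\subset\calD(\calL_g)=\bd(G,\calA\cup\calN_g)$ is exactly its limit set, so it is the natural candidate.

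To turn $e$ into a separation of the decomposition space, the key observation is that every line in $\calL_g$ lies entirely on one side of $e$. Indeed, each conjugate $bgb^{-1}$ is elliptic in $S$, so under the canonical $G$--equivariant map $f\from T\to S$ from a Grushko tree $T$ (which exists because all peripheral subgroups are elliptic in both) the axis $T_{bgb^{-1}}$ is carried into the fixed subtree of $bgb^{-1}$; this subtree is bounded and avoids the midpoint $m$ of $e$, since $bgb^{-1}$ fixes $m$ only if $bgb^{-1}\in\I{a}$, which would again make $g$ simple. Hence both endpoints of each $\ell\in\calL_g$ are carried to the same complementary direction $Y_1$ or $Y_2$ of $m$ in $S$.

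Next I would let $B_i\subset\bd(G,\calA)$ be the set of boundary points carried into $Y_i$. These are closed, their union is all of $\bd(G,\calA)$, and $B_1\cap B_2=\{a^\infty,a^{-\infty}\}$. By the previous paragraph the gluing defining $q$ never identifies a point of $B_1\setminus\{a^\infty,a^{-\infty}\}$ with one of $B_2\setminus\{a^\infty,a^{-\infty}\}$, so $q(B_1)$ and $q(B_2)$ cover $\calD(\calL_g)$ and meet only in $\{q(a^\infty),q(a^{-\infty})\}$. Minimality of the action on $S$ guarantees that each side of $e$ carries a further boundary point (an end of $S$ or a peripheral vertex), so each $q(B_i)$ strictly contains the pair; hence removing $\{q(a^\infty),q(a^{-\infty})\}$ disconnects $\calD(\calL_g)$, which is connected by Corollary~\ref{co:not simple}. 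Finally, a single one of the two points cannot separate, since the fixed points of $a$ remain joined within each piece by the $\I{a}$--translates of $e$; therefore $\{q(a^\infty),q(a^{-\infty})\}$ is a genuine cut pair, and being of the required form it is a loxodromic cut pair.

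I expect the delicate point to be the assignment of boundary points to the two sides of $e$ and the identity $B_1\cap B_2=\{a^\infty,a^{-\infty}\}$. The cleanest way to make this rigorous is to work with $F:=f^{-1}(m)$, an $\I{a}$--invariant subtree of $T$ containing $T_a$; the directions off $F$, grouped according to whether they map into $Y_1$ or $Y_2$, realize $B_1$ and $B_2$, and the equality above is the statement that $F$ contributes exactly the boundary points $a^\infty,a^{-\infty}$. This in turn follows because $F$ has exactly two ends (the action of $\Stab_S(e)=\I{a}$ on $F$ being cocompact) and contains no peripheral vertex (as $f$ sends every such vertex to a vertex of $S$ distinct from the midpoint $m$). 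This is precisely the combination-theorem content underlying the correspondence between splittings over two-ended subgroups and cut pairs in the Bowditch boundary exploited by Bowditch and by Hallmark--Hruska, carried out here directly for the free product $(G,\calA)$.
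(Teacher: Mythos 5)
Your overall route is genuinely different from the paper's proof: the paper passes to the further quotient $\calD(\calL_g\cup\calL_a)$, in which $\I{a}$ has become peripheral for $(G,\calA\cup\calN_{\calL_g\cup\calL_a})$, invokes Dasgupta--Hruska's theorem to obtain a cut point fixed by $\I{a}$, and pulls it back to the pair $\{q(a^\infty),q(a^{-\infty})\}$ in $\calD(\calL_g)$; you instead build the separation by hand, which is essentially the direct argument the paper only sketches (for $\calA=\emptyset$) in Remark~\ref{rem:cut set}. Your preliminary steps are correct and match the paper's: existence of an edge with nontrivial stabilizer $\I{a}$, simplicity of $a$ via Lemma~\ref{lem:edge stabs in Z-splittings}, and non-peripherality of $a$ in $(G,\calA\cup\calN_g)$, which is exactly what guarantees $q(a^\infty)\neq q(a^{-\infty})$.

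However, the central step of your halfspace construction has a genuine gap. The claim that ``the axis $T_{bgb^{-1}}$ is carried into the fixed subtree of $bgb^{-1}$'' is false: $f(T_{bgb^{-1}})$ is only a $bgb^{-1}$--invariant subtree, and such a subtree generally protrudes from the fixed set (which here is a single vertex, since $g$ fixes no edge of $S$ by Lemma~\ref{lem:edge stabs in Z-splittings}) and can perfectly well cross the midpoint $m$. So you cannot conclude that images of lines in $\calL_g$ avoid $m$; you can only hope that the two \emph{ends} of each line eventually lie on one common side of $m$, and that requires an asymptotic argument you have not supplied --- for instance: the orbit $\{h^{-n}m\}$, $h=bgb^{-1}$, is an infinite discrete set of edge midpoints unless a power of $h$ lies in $\I{a}$ (excluded by your commensurability check), so the compact set $f([p,hp])$ meets it only finitely often, and if the two ends landed on opposite sides of $m$ one would force infinitely many points $h^{-n}m$ into the fixed segment $[c,f(p)]$, $c$ a vertex fixed by $h$, a contradiction. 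The same problem infects your treatment of $F=f^{-1}(m)$: it is in general a non-connected forest, it need not contain $T_a$ (indeed $f(T_a)$ may miss $m$ entirely), and cocompactness of the $\I{a}$--action on $F$ is asserted rather than proved; what you actually need, and what cocompactness (once established via discreteness of the orbit of $m$) does give, is that $F$ lies in a bounded neighborhood of $T_a$, whence its limit set in $\bd(G,\calA)$ is $\{a^\infty,a^{-\infty}\}$ and your identity $B_1\cap B_2=\{a^\infty,a^{-\infty}\}$ follows. Finally, your argument that neither point of the pair separates by itself (``joined by the $\I{a}$--translates of $e$'') does not establish anything about $\calD(\calL_g)$; the clean fix is Lemma~\ref{lem:parabolic cut}: since $\calD(\calL_g)$ is connected by Corollary~\ref{co:not simple}, any cut point is a peripheral cut point or of the form $q(\ell)$ with $\ell\in\calL_g$, and $q(a^{\pm\infty})$ is neither because $a$ is not conjugate into $\calA\cup\calN_g$.
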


\begin{proof}
Suppose that $g$ is $\calZ$--simple but not simple and let $S$ be a $\calZ$--splitting in which $g$ is elliptic.  As $g$ is not simple, every edge in $S$ has nontrivial stabilizer.  Fix an element $a \in G$ that generates the stabilizer of an edge in $S$.   

As $g$ is not simple, we have that $\calD(\calL_g)$ is connected by Corollary~\ref{co:not simple}.  Let $\calL = \calL_g \cup \calL_a$.  Then as $\calD(\calL)$ is a quotient of $\calD(\calL_g)$, it is also connected and by Lemma~\ref{lem:boundary}, it is homeomorphic to the Bowditch boundary of the relatively hyperbolic group $(G,\calA \cup \calN_\calL)$.  As $(G,\calA \cup \calN_\calL)$ splits over $\I{a}$, which is a peripheral subgroup of $(G,\calA \cup \calN_\calL)$, by work of Dasgupta--Hruska~\cite[Theorem~1.1]{ar:DH} we find that $\calD(\calL)$ contains a cut point which is the fixed point of the subgroup $\I{a}$.  The pre-image of this cut point in $\calD(\calL_g)$ is the two point set $\{q(a^\infty),q(a^{-\infty})\}$.  This pair is thus a loxodromic cut pair.
\end{proof}

\begin{remark}\label{rem:cut set}
When $\calA = \emptyset$, so that $G$ is a free group, it is easy to see the conclusion of Lemma~\ref{lem:cut set} directly.  Indeed, for simplicity, assume that there is a $\calZ$--splitting corresponding to an amalgamated free product decomposition $G = A \ast_{\I{a}} B$ where $g \in A$.  As $g$ is not simple, we have that $g$ is not commensurable to $a$.  There is a 2--complex $M$ obtained from two graphs $\Gamma_A$ and $\Gamma_B$ where $\pi_1(\Gamma_A) \cong A$ and $\pi_1(\Gamma_B) \cong B$ by attaching a cylinder $S^1 \times [0,1]$ where $S^1 \times \{0\}$ is glued to a loop in $\Gamma_A$ representing $a \in A$ and $S^1 \times \{1\}$ is glued to a loop in $\Gamma_B$ representing $a \in B$.  The universal cover $\tM$ is quasi-isometric to $G$ and so we can identify $\bd \tM$ with $\bd G$.  Then $S^1 \times \{1/2\} \subset M$ lifts to an embedded line $L \subset \tM$ whose endpoints correspond to $a^\infty, a^{-\infty} \in \bd G$.  By construction, $\bd G$ decomposes into two sets $C_0$ and $C_1$, corresponding to the Gromov boundaries of the two components of $\tM - L$, where $C_0 \cap C_1 = \{a^\infty, a^{-\infty}\}$.  Let $C'_0 = C_0 - \{a^\infty,a^{-\infty}\}$ and $C'_1 = C_1 - \{a^\infty,a^{-\infty}\}$.  For each $h \in G$, the pair $\{hg^\infty,hg^{-\infty}\}$ belongs either to $C'_0$ or to $C'_1$.  Hence $q(C'_0)$ and $q(C'_1)$ are open disjoint sets and $\calD(\calL_g) - \{q(a^\infty),q(a^{-\infty})\} = q(C'_0) \cup q(C'_1)$ showing that $\{q(a^\infty),q(a^{-\infty})\}$ is a cut pair.  
\end{remark}


\section{Modeling the decomposition space via Whitehead graphs}\label{sec:model}

We require a notion of the Whitehead graph based on the complement of a locally finite subtree $X \subset T$ in order to model the decomposition space.  In the setting where $\calA = \emptyset$, this idea was fully developed and investigated by Cashen--Macura~\cite{ar:CM11} (see the references within as well for early work in this direction).  In that setting, the definition using a locally finite subtree $X \subset T$ is exactly as it is for a sole vertex  $v \in T$: the vertices of the Whitehead graph are the directions of $T - X$ and two are joined by an edge for each line $\ell \in \calL$ such that $\ell_T$ meets both directions.  There is a way to build up such graphs by ``splicing'' together Whitehead graphs based at vertices.  In the general free product setting the naive approach does not work as sometimes the $\Stab_T(\bd_0Y)$--orbit of a component $Y \subset T - X$ meets $X$, which complicates this picture.  

After first defining our notion of a Whitehead graph in this general setting in Section~\ref{subsec:whitehead graph}, we relate it to cut sets in the decomposition space in Lemma~\ref{lem:hull} and Section~\ref{subsec:identify cut sets}.  

For the remained of this section, let $(G,\calA)$ be a fixed non-sporadic torsion-free free product.  

\subsection{Whitehead graph relative to a locally finite subtree}\label{subsec:whitehead graph}

We begin by producing a structure modeling the relations between the lines and directions at a vertex with non-trivial stabilizer.

Suppose $T \in \calO$ is a Grushko tree, $\calL$ is a periodic line collection, and $v \in T$ is a vertex with non-trivial stabilizer.  For simplicity, we will assume that $T$ is Whitehead reduced for $\calL$ as this is the only case we are concerned with in the sequel.  We define a graph $T_v(\calL)$ whose vertices correspond to directions at $v$ and two such directions $Y,Y' \subset T - \{v\}$ are connected by an edge for each line $\ell \in \calL$ such that $\ell_T$ meets both $Y$ and $Y'$.  Note that $T_v(\calL)$ is locally finite as $\calL$ has finitely many $G$--orbits and hence each edge $e \subset T$ meets only finitely many $\ell_T$.  

Suppose $\tU \subseteq T_v(\calL)$ is a connected subgraph and $G_\tU$ is the subgroup of $\Stab_v(T)$ that leaves $\tU$ invariant.  Then $G_\tU$ acts freely on $\tU$ and the quotient $\tU/G_\tU$ is isomorphic to a connected component $U$ of the Whitehead graph $\Wh_T(\calL,v)$.  Moreover, $G_\tU$ is in the conjugacy class $\Mon(U)$.  In particular, as $\Wh_T(\calL,v)$ does not have a type i admissible cut and each of the peripheral subgroups are torsion-free, each component of $T_v(\calL)$ contains infinitely many vertices.  

Consider a locally finite subtree $X \subset T$.  Note that we allow $X$ to have vertices of degree one that are not vertices of $T$.  We define $T_v(\calL) - X$ to be the subgraph of $T_v(\calL)$ spanned by the directions at $v$ that are disjoint from $X$.  That is, we remove from $T_v(\calL)$ the directions at $v$ that meet $X$ and all of these incident edges.  We will use the structure of this graph to create a set, based on the components of $T_v(\calL) - X$, that will appear as vertices in the Whitehead graph $\Wh_T(\calL,X)$.  Specifically, by $V_X(\calL,v)$ we denote the union of the vertices of $T_v(\calL) - X$ that belong to a finite component of $T_v(\calL) - X$ with an additional element denoted $\hv$.  The element $\hv$ represents each direction at $v$ that belongs to an infinite component of $T_v(\calL) - X$.  As $X$ is a locally finite tree, $T_v(\calL)$ is locally finite, and each component of $T_v(\calL)$ contains infinitely many vertices, the set $V_X(\calL,v)$ is a finite set.  

The Whitehead graph based at $X$ is defined as follows.  The vertex set of $\Wh_T(\calL,X)$ is the union of $V_X(\calL,v)$ over all vertices $v$ of $T$ contained in $X$ with infinite stabilizer, together with the set of directions $Y \subset T-X$ where $\Stab_T(\bd_0 Y)$ is trivial.  Two distinct vertices of $\Wh_T(\calL,X)$ are connected by an edge for each line $\ell \in \calL$ such that $\ell_T$ has non-trivial intersection with the directions represented by the two vertices.  As was the case for the version of the Whitehead graph defined in Section~\ref{sec:whitehead}, the vertex representing a direction $Y \subset T - X$ is denoted by $[Y]$.  The vertex $[Y]$ in $\Wh_T(\calL,X)$ is called \emph{trivial} if $\Stab_T(\bd_0 Y) = \{1\}$ and is called \emph{infinite} if $\#|\Stab_T(\bd_0 Y)| = \infty$.  We note that this dichotomy is well-defined as $[Y] = [Y']$ implies that $\bd_0 Y = \bd_0 Y'$.     

When $X$ is a single vertex $v$ where $\Stab_T(v) = \{1\}$ this notion of a Whitehead graph agrees with the definition of Guirardel--Horbez given above, which is the classical notion of a Whitehead graph.  When $\#\abs{\Stab_T(v)} = \infty$ the above notion of Whitehead graph will always consist of a single vertex and no edges.  We will never make use of this notion.  Hence, in the sequel when we are referring to a Whitehead graph relative to a single vertex, i.e., $\Wh_T(\calL,v)$, we are always using the Guirardel--Horbez definition.

\begin{example}\label{ex:whitehead graph subtree}
We consider the free product $(G,\calA)$, Grushko tree $T \in \calO$ and element $g \in G$ as in Example~\ref{ex:whitehead graph}.  We will also make use of the notation from that example.  The graph $T_v(\calL_g)$ is the disjoint union of four lines as shown in Figure~\ref{fig:TvL}.

\begin{figure}[h]
\begin{tikzpicture}
\def\ee{2}
\def\len{6}
\filldraw (-0.8*\ee,0) circle [radius=1pt];
\filldraw (-0.7*\ee,0) circle [radius=1pt];
\filldraw (-0.6*\ee,0) circle [radius=1pt];
\filldraw (\len*\ee + 0.8*\ee,0) circle [radius=1pt];
\filldraw (\len*\ee + 0.7*\ee,0) circle [radius=1pt];
\filldraw (\len*\ee + 0.6*\ee,0) circle [radius=1pt];
\draw[very thick] (-0.5*\ee,0) -- (\len*\ee+0.5*\ee,0);
\draw[thick,fill=black] (0*\ee,0) circle [radius=2.5pt] node[above=0.1] {$a^{-1}Y_c^-$};
\draw[thick,fill=black] (1*\ee,0) circle [radius=2.5pt] node[above=0.1] {$a^{-1}Y_b^-$};
\draw[thick,fill=black] (2*\ee,0) circle [radius=2.5pt] node[above=0.1] {$Y_c^+$};
\draw[thick,fill=black] (3*\ee,0) circle [radius=2.5pt] node[above=0.1] {$Y_b^+$};
\draw[thick,fill=black] (4*\ee,0) circle [radius=2.5pt] node[above=0.1] {$a^3Y_c^-$};
\draw[thick,fill=black] (5*\ee,0) circle [radius=2.5pt] node[above=0.1] {$a^3Y_b^-$};
\draw[thick,fill=black] (6*\ee,0) circle [radius=2.5pt] node[above=0.1] {$a^4Y_c^+$};
\begin{scope}[yshift=-1.5cm]
\filldraw (-0.8*\ee,0) circle [radius=1pt];
\filldraw (-0.7*\ee,0) circle [radius=1pt];
\filldraw (-0.6*\ee,0) circle [radius=1pt];
\filldraw (\len*\ee + 0.8*\ee,0) circle [radius=1pt];
\filldraw (\len*\ee + 0.7*\ee,0) circle [radius=1pt];
\filldraw (\len*\ee + 0.6*\ee,0) circle [radius=1pt];
\draw[very thick] (-0.5*\ee,0) -- (\len*\ee+0.5*\ee,0);
\draw[thick,fill=black] (0*\ee,0) circle [radius=2.5pt] node[above=0.1] {$Y_c^-$};
\draw[thick,fill=black] (1*\ee,0) circle [radius=2.5pt] node[above=0.1] {$Y_b^-$};
\draw[thick,fill=black] (2*\ee,0) circle [radius=2.5pt] node[above=0.1] {$aY_c^+$};
\draw[thick,fill=black] (3*\ee,0) circle [radius=2.5pt] node[above=0.1] {$aY_b^+$};
\draw[thick,fill=black] (4*\ee,0) circle [radius=2.5pt] node[above=0.1] {$a^4Y_c^-$};
\draw[thick,fill=black] (5*\ee,0) circle [radius=2.5pt] node[above=0.1] {$a^4Y_b^-$};
\draw[thick,fill=black] (6*\ee,0) circle [radius=2.5pt] node[above=0.1] {$a^5Y_c^+$};
\end{scope}
\begin{scope}[yshift=-3cm]
\filldraw (-0.8*\ee,0) circle [radius=1pt];
\filldraw (-0.7*\ee,0) circle [radius=1pt];
\filldraw (-0.6*\ee,0) circle [radius=1pt];
\filldraw (\len*\ee + 0.8*\ee,0) circle [radius=1pt];
\filldraw (\len*\ee + 0.7*\ee,0) circle [radius=1pt];
\filldraw (\len*\ee + 0.6*\ee,0) circle [radius=1pt];
\draw[very thick] (-0.5*\ee,0) -- (\len*\ee+0.5*\ee,0);
\draw[thick,fill=black] (0*\ee,0) circle [radius=2.5pt] node[above=0.1] {$aY_c^-$};
\draw[thick,fill=black] (1*\ee,0) circle [radius=2.5pt] node[above=0.1] {$aY_b^-$};
\draw[thick,fill=black] (2*\ee,0) circle [radius=2.5pt] node[above=0.1] {$a^2Y_c^+$};
\draw[thick,fill=black] (3*\ee,0) circle [radius=2.5pt] node[above=0.1] {$a^2Y_b^+$};
\draw[thick,fill=black] (4*\ee,0) circle [radius=2.5pt] node[above=0.1] {$a^5Y_c^-$};
\draw[thick,fill=black] (5*\ee,0) circle [radius=2.5pt] node[above=0.1] {$a^5Y_b^-$};
\draw[thick,fill=black] (6*\ee,0) circle [radius=2.5pt] node[above=0.1] {$a^6Y_c^+$};
\end{scope}
\begin{scope}[yshift=-4.5cm]
\filldraw (-0.8*\ee,0) circle [radius=1pt];
\filldraw (-0.7*\ee,0) circle [radius=1pt];
\filldraw (-0.6*\ee,0) circle [radius=1pt];
\filldraw (\len*\ee + 0.8*\ee,0) circle [radius=1pt];
\filldraw (\len*\ee + 0.7*\ee,0) circle [radius=1pt];
\filldraw (\len*\ee + 0.6*\ee,0) circle [radius=1pt];
\draw[very thick] (-0.5*\ee,0) -- (\len*\ee+0.5*\ee,0);
\draw[thick,fill=black] (0*\ee,0) circle [radius=2.5pt] node[above=0.1] {$a^2Y_c^-$};
\draw[thick,fill=black] (1*\ee,0) circle [radius=2.5pt] node[above=0.1] {$a^2Y_b^-$};
\draw[thick,fill=black] (2*\ee,0) circle [radius=2.5pt] node[above=0.1] {$a^3Y_c^+$};
\draw[thick,fill=black] (3*\ee,0) circle [radius=2.5pt] node[above=0.1] {$a^3Y_b^+$};
\draw[thick,fill=black] (4*\ee,0) circle [radius=2.5pt] node[above=0.1] {$a^6Y_c^-$};
\draw[thick,fill=black] (5*\ee,0) circle [radius=2.5pt] node[above=0.1] {$a^6Y_b^-$};
\draw[thick,fill=black] (6*\ee,0) circle [radius=2.5pt] node[above=0.1] {$a^7Y_c^+$};
\end{scope}
\end{tikzpicture}
\caption{The graph $T_v(\calL_g)$ in Example~\ref{ex:whitehead graph subtree}.}\label{fig:TvL}
\end{figure}

Subdivide the edge $e_1$ into $e_1^+e_1^-$ and likewise subdivide $e_2$ into $e_2^+e_2^-$.  Let $X$ be the subtree of $T$ that is the union of the edges $a^{-1}b^{-1}e_1^-$, $e_1$, $bc^{-1}e_2^-$ and $ba^{-4}b^{-1}e_1^-$.  There are two vertices of $T$ that belong to $X$, namely $v$ and $bv$.  For simplicity, we denote $bv$ by $w$.  There are only two vertices of $T_v(\calL_g)$ that meet $X$, specifically $a^{-1}Y_b^-$ and $Y_b^+$.  Referring to Figure~\ref{fig:TvL}, we see that removing these vertices from $T_v(\calL_g)$ results in a graph with six connected components: three that are lines, two that are rays and one that only consists of the single vertex $Y_c^+$.  Thus $V_X(\calL_g,v)$ consists of two elements: $Y_c^+$ and $\hv$.  

Translation by $b$ defines a bijection between the directions at $v$ and the directions at $w$.  We use the letter ``$Z$'' to denote the corresponding directions at $w$, e.g., $Z_b^+ = bY_b^+$, $aZ_c^- = b(aY_c^-)$, etc.  We see that $X$ meets the directions $Z_b^-$, $Z_c^-$, and $a^{-4}Z_b^-$.  Using the Figure for $T_v(\calL_g)$ in Figure~\ref{fig:TvL}, we find that $V_X(\calL_g,w)$ consists of three elements: $a^{-3}Z_b^+$, $a^{-3}Z_c^+$, and $\hw$.  

There are three directions $Y \subset T - X$ where $\Stab_T(\bd_0 Y)$ is trivial.  The direction contained in $a^{-1}Y_b^-$ is denoted $a^{-1}W_b^-$ (note: $a^{-1}Y_b^- = a^{-1}b^{-1}e_1^- \cup a^{-1}W_b^-$).  Likewise we denote the direction contained in $Z_c^-$ by $W_c^-$ and the direction contained in $a^{-4}Z_b^-$ by $a^{-4}W_b^-$.  This set-up is shown in Figure~\ref{fig:whitehead set-up}.  The Whitehead graph $\Wh_T(\calL_g,X)$ is shown in Figure~\ref{fig:whitehead graph subtree}.

\begin{figure}[h]
\begin{tikzpicture}
\draw[line width=2.5mm,yellow] (-1,1) --(0,0) -- (7,0) -- (6,1);
\draw[line width=2.5mm,yellow] (7,0) -- (8,-1);
\draw[very thick,->-] (0,0) -- (7,0) node[pos=0.5,above=0.5] {$e_1$};
\def\ee{2.5}
\def\sqr{0.707106781}
\def\os{0.25}
\begin{scope}
\draw[very thick,->-=0.5,rotate=135] (\ee,0) -- (0,0) node[pos=0,label=above:{$a^{-1}W_b^- \subset a^{-1}Y_b^-$}] (a1) {};
\filldraw (a1) circle [radius=2pt];
\draw[very thick,->-=0.6,rotate=45] (0,0) -- (\ee,0) node[pos=1,label=above right:{$Y_c^+$}] (a2) {};
\filldraw (a2) circle [radius=2pt];
\draw[very thick,->-=0.5,rotate=-45] (\ee,0) -- (0,0) node[pos=0,label=below right:{$a^3Y_c^-$}] (a3) {};
\filldraw (a3) circle [radius=2pt];
\draw[very thick,->-=0.5,rotate=-135] (\ee,0) -- (0,0) node[pos=0,label=below left:{$a^{-1}Y_c^-$}] (a4) {};
\filldraw (a4) circle [radius=2pt];
\draw[thick,red] (-\sqr*\ee+\sqr*\os,\sqr*\ee+\sqr*\os) -- (0,2*\sqr*\os) -- (\sqr*\ee-\sqr*\os,\sqr*\ee+\sqr*\os);
\draw[thick,blue] (\sqr*\ee+\sqr*\os,\sqr*\ee-\sqr*\os) -- (3.4*\sqr*\os,1.4*\sqr*\os) -- (3.5,1.4*\sqr*\os); 
\draw[thick,teal] (\sqr*\ee+\sqr*\os,-\sqr*\ee+\sqr*\os) -- (3.4*\sqr*\os,-1.4*\sqr*\os) -- (3.5,-1.4*\sqr*\os); 
\draw[thick,violet] (-\sqr*\ee-\sqr*\os,\sqr*\ee-\sqr*\os) -- (-2*\sqr*\os,0) -- (-\sqr*\ee-\sqr*\os,-\sqr*\ee+\sqr*\os);
\filldraw (0,0) circle [radius=2.5pt];
\end{scope}
\begin{scope}[xshift=7cm]
\draw[very thick,->-=0.5,rotate=135] (\ee,0) -- (0,0) node[pos=0,label=above:{$W_c^- \subset Z_c^-$}] (b1) {};
\filldraw (b1) circle [radius=2pt];
\draw[very thick,->-=0.6,rotate=90] (0,0) -- (\ee,0) node[pos=1,label=above:{$a^{-3}Z_b^+$}] (b2) {};
\filldraw (b2) circle [radius=2pt];
\draw[very thick,->-=0.6,rotate=45] (0,0) -- (\ee,0) node[pos=1,label=above right:{$a^{-3}Z_c^+$}] (b3) {};
\filldraw (b3) circle [radius=2pt];
\draw[very thick,->-=0.5,rotate=-45] (\ee,0) -- (0,0) node[pos=0,label={[xshift=0.5cm,yshift=-1cm]$a^{-4}W_b^- \subset a^{-4}Z_b^-$}] (b4) {};
\filldraw (b4) circle [radius=2pt];
\draw[very thick,->-=0.5,rotate=-90] (\ee,0) -- (0,0) node[pos=0,label=below:{$a^{-4}Z_c^-$}] (b5) {};
\filldraw (b5) circle [radius=2pt];
\draw[very thick,->-=0.5,rotate=-135] (\ee,0) -- (0,0) node[pos=0,label=below left:{$aZ_c^+$}] (b6) {};
\filldraw (b6) circle [radius=2pt];
\draw[thick,blue] (-3.5,1.4*\sqr*\os) -- (-3.4*\sqr*\os,1.4*\sqr*\os) -- (-\sqr*\ee-\sqr*\os,\sqr*\ee-\sqr*\os); 
\draw[thick,black!30!green] (-\sqr*\ee+\sqr*\os,\sqr*\ee+\sqr*\os) -- (-\os,2*\sqr*\os+\os) -- (-\os,\ee);
\draw[thick,purple] (\os,\ee) -- (\os,2*\sqr*\os+\os) -- (\sqr*\ee-\sqr*\os,\sqr*\ee+\sqr*\os);
\draw[thick,cyan] (\sqr*\ee+\sqr*\os,\sqr*\ee-\sqr*\os) -- (2*\sqr*\os,0) -- (\sqr*\ee+\sqr*\os,-\sqr*\ee+\sqr*\os);
\draw[thick,brown] (\os,-\ee) -- (\os,-2*\sqr*\os-\os) -- (\sqr*\ee-\sqr*\os,-\sqr*\ee-\sqr*\os);
\draw[thick,teal] (-3.5,-1.4*\sqr*\os) -- (-3.4*\sqr*\os,-1.4*\sqr*\os) -- (-\sqr*\ee-\sqr*\os,-\sqr*\ee+\sqr*\os); 
\filldraw (0,0) circle [radius=2.5pt];
\end{scope}
\end{tikzpicture}
\caption{The set-up for the subtree $X$, shown in yellow, and some translates of $T_g$ in Example~\ref{ex:whitehead graph subtree}.}\label{fig:whitehead set-up}
\end{figure}
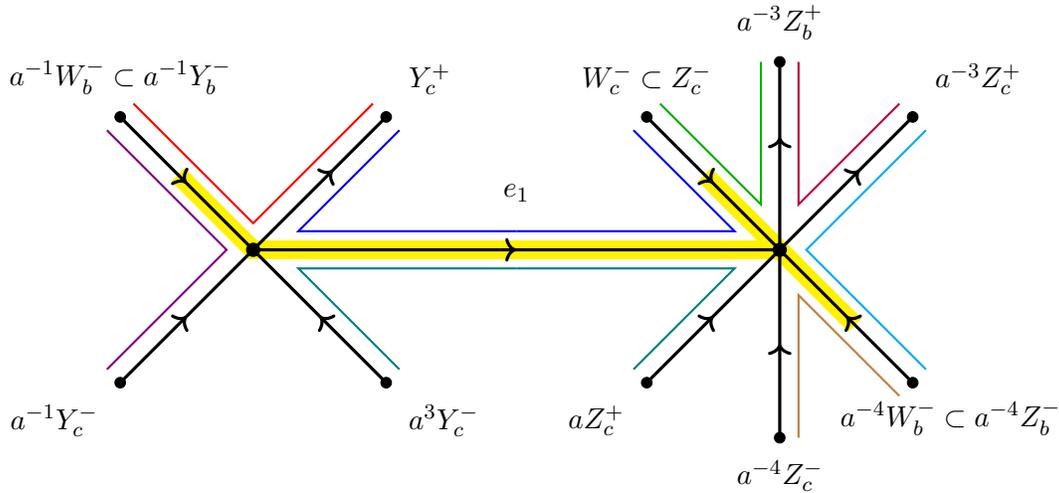

\begin{figure}[h]
\begin{tikzpicture}
\node[draw=none,minimum size=5cm,regular polygon,regular polygon sides=8] at (0,0) (a) {};
\draw[very thick,red] (a.corner 1) -- (a.corner 2);
\draw[very thick,violet] (a.corner 2) -- (a.corner 3);
\draw[very thick,teal] (a.corner 3) -- (a.corner 4);
\draw[very thick,brown] (a.corner 4) -- (a.corner 5);
\draw[very thick,cyan] (a.corner 5) -- (a.corner 6);
\draw[very thick,purple] (a.corner 6) -- (a.corner 7);
\draw[very thick,black!30!green] (a.corner 7) -- (a.corner 8);
\draw[very thick,blue] (a.corner 8) -- (a.corner 1);
\node at (a.corner 1) [label=above:{$[Y_c^+]$}] {};
\node at (a.corner 2) [label=above:{$[a^{-1}W_b^-]$}] {};
\node at (a.corner 3) [label=left:{$\hv$}] {};
\node at (a.corner 4) [label=left:{$\hw$}] {};
\node at (a.corner 5) [label=below:{$[a^{-4}W_b^-]$}] {};
\node at (a.corner 6) [label=below:{$[a^{-3}Z_c^+]$}] {};
\node at (a.corner 7) [label=right:{$[a^{-3}Z_b^+]$}] {};
\node at (a.corner 8) [label=right:{$[W_c^-]$}] {};
\foreach \x in {1,2,...,8} {
	\fill (a.corner \x) circle[radius=2pt];
}
\end{tikzpicture}
\caption{The Whitehead graph $\Wh_T(\calL_g,X)$ in Example~\ref{ex:whitehead graph subtree}.}\label{fig:whitehead graph subtree}
\end{figure}
\end{example}

The following two lemmas will be used to help analyze the structure of $\Wh_T(\calL,X)$.  An \emph{embedded line} in a graph $\Gamma$ is an injective function $\alpha \from \RR \to \Gamma$ which is simplicial using the standard simplicial structure on $\RR$ where the vertex set is equal to $\ZZ$.  

\begin{lemma}\label{lem:infinite line}
Let $\calL$ be a periodic line collection.  Suppose that $T \in \calO$ is a Grushko tree 
that is Whitehead reduced for $\calL$.  For any vertex $v \in V_\infty(T)$ and vertex $Y$ of $T_v(\calL)$, there is an embedded line $\alpha_Y \from \RR \to T_v(\calL)$ such that $\alpha_Y(0) = Y$.
\end{lemma}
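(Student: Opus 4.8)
The plan is to work inside the connected component $\tU$ of $T_v(\calL)$ that contains the vertex $Y$, to exploit the covering $\tU \to U$ onto a component $U$ of the Whitehead graph $\Wh_T(\calL,v)$, and then to build the line by concatenating two embedded rays out of $Y$ that meet only at $Y$.

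First I would record the relevant features of $\tU$. By the discussion preceding the lemma, $\tU$ is locally finite, the setwise stabilizer $G_\tU \le \Stab_T(v)$ acts freely on $\tU$ with quotient the (finite) component $U = \tU/G_\tU$ of $\Wh_T(\calL,v)$, and $G_\tU$ lies in the conjugacy class $\Mon(U)$. Since $T$ is Whitehead reduced for $\calL$ and $\Stab_T(v)$ is nontrivial, $\Wh_T(\calL,v)$ has no type~i admissible cut (Definition~\ref{def:admissible cut}), so $\Mon(U)\neq\{1\}$; as $\Stab_T(v)$ is torsion-free, the nontrivial group $G_\tU$ is infinite. Thus $\tU$ is an infinite, connected, locally finite graph carrying a free, cocompact action of the infinite group $G_\tU$.

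Next I would manufacture a proper bi-infinite walk through $Y$. Choose $a\in G_\tU$ of infinite order. Freeness makes $n\mapsto a^nY$ injective, and since balls in the locally finite graph $\tU$ are finite, this orbit is metrically proper: $d_\tU(Y,a^nY)\to\infty$ as $\abs{n}\to\infty$. Fixing an edge-path $\gamma$ from $Y$ to $aY$ and setting $W=\bigcup_{n\in\ZZ}a^n\gamma$ produces an $a$-equivariant bi-infinite edge-walk with $W(0)=Y$; properness of the orbit together with $\gamma$ having bounded length shows that $W$ visits every vertex only finitely often and escapes to infinity in both directions.

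Finally, and this is where the real work lies, I would extract an embedded line through $Y$. It suffices to find two embedded rays from $Y$ meeting only at $Y$, for then setting $\alpha_Y(n)=\rho^+(n)$ for $n\ge 0$ and $\alpha_Y(n)=\rho^-(-n)$ for $n\le 0$ gives an injective simplicial map $\alpha_Y\from\RR\to T_v(\calL)$ with $\alpha_Y(0)=Y$; a single proper embedded ray out of $Y$ exists by K\"onig's lemma, using that $\tU$ is infinite and locally finite. By the fan form of Menger's theorem for locally finite graphs, two rays from $Y$ disjoint except at $Y$ exist provided no set of at most one vertex separates $Y$ from infinity, i.e. provided neither the empty set nor a single vertex $z\neq Y$ cuts off a finite component of $\tU$ containing $Y$. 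The empty set cannot, since $\tU$ is infinite and connected. A single cut vertex $z$ cannot either: the setwise $G_\tU$-stabilizer of a finite component is finite, hence trivial as $\Stab_T(v)$ is torsion-free, so the finite side should descend to a connected subgraph of $U$ with trivial monodromy attached to the rest of $U$ along the single vertex that is the image of $z$, namely a type~ii admissible cut of $\Wh_T(\calL,v)$, contradicting that $T$ is Whitehead reduced. The main obstacle is precisely this last step: converting a hypothetical separation of $Y$ from infinity by at most one vertex in the cover $\tU$ into an honest type~ii admissible cut downstairs, where one must verify carefully that torsion-freeness forces the monodromy of the finite side to vanish rather than merely its stabilizer. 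Everything else, namely the reduction to $\tU$, the properness of the orbit walk, and the concatenation of two disjoint rays into $\alpha_Y$, is routine.
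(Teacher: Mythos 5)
Your pivotal step has a genuine gap, and it sits exactly where you flagged it. To rule out a vertex $z \neq Y$ cutting off a finite component $K \ni Y$ of $\tU$, you infer from ``the setwise $G_\tU$--stabilizer of $K$ is trivial'' that $K$ descends to a subgraph of $U$ with trivial monodromy attached to the rest along the single vertex $p(z)$, where $p \from \tU \to U$ is the quotient map. That inference is invalid as stated: triviality of the setwise stabilizer does not make $p$ injective on $K \cup \{z\}$. A priori some nontrivial $g \in G_\tU$ could satisfy $gK \cap K \neq \emptyset$ with $gK \neq K$, in which case $p(K)$ is a proper quotient of $K$ whose monodromy need not vanish; or a translate $gz$ could lie in $K$, in which case $p(z) \in p(K)$ and the image subgraph meets the rest of $U$ in more than one vertex, so no type ii admissible cut results. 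So the contradiction with Whitehead-reducedness is asserted rather than proved, and the one reason you give (trivial setwise stabilizer, via torsion-freeness and freeness of the action) does not supply it.

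The gap is fixable by a counting argument, after which your route is correct and genuinely different from the paper's. One shows $g(K \cup \{z\}) \cap (K \cup \{z\}) = \emptyset$ for all $g \neq 1$: if $gz \in K$, note that every vertex outside $K \cup \{z\}$ reaches $z$ by a path avoiding $K$, hence avoiding $gz$, so all of $\tU - (K \cup \{z\})$ lies in the component $C$ of $\tU - \{gz\}$ containing $z$; the component $gK$ of $\tU - \{gz\}$ then either equals $C$, forcing $\tU$ to be finite (absurd), or satisfies $gK \subseteq K \setminus \{gz\}$, contradicting $\abs{gK} = \abs{K}$. Similarly $gK \cap K \neq \emptyset$ forces $gz \in K$ or $gK \subseteq K$, both excluded. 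With this in place $p$ embeds $K \cup \{z\}$ and its incident edges into $U$, every loop in the image lifts, the monodromy of the image is honestly trivial, and the admissible cut (type ii, or type i if the image were a whole component, which is excluded since $\Mon(U) \neq \{1\}$) contradicts reducedness; your fan--Menger-plus-K\"onig extraction of two rays from $Y$ disjoint except at $Y$ then finishes the proof (the paragraph constructing the proper orbit walk $W$ is never used and can be deleted). By contrast, the paper avoids infinite Menger theory entirely and works downstairs in the finite component $U$: if $[Y]$ is a cut vertex it joins embedded cycles of nontrivial monodromy on either side by a path through $[Y]$, and otherwise it finds an embedded cycle through $[Y]$ of nontrivial monodromy inside its biconnected component; torsion-freeness makes these monodromies infinite order, so the configurations lift to embedded lines through $Y$. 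That argument is more elementary and self-contained, while yours, once repaired, trades the case analysis for a general ends-and-connectivity principle.
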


\begin{proof}
Fix a vertex $Y \in T_v(\calL)$, let $\tU$ be the component of $T_v(\calL)$ that contains $Y$, and let $G_{\tU}$ be the subgroup of $\Stab_T(v)$ that leaves $\tU$ invariant.  As above, we identify $\tU/G_{\tU}$ with a component $U$ in $\Wh_T(\calL,v)$ where $G_\tU$ is in the conjugacy class of $\Mon(U)$.  In particular, since $\Wh_T(\calL,v)$ does not have a type i admissible cut, we have that $G_\tU$ is nontrivial.  As $\Stab_T(v)$ is torsion-free, every nontrivial element in $G_\tU$ has infinite order.  

First, suppose that $[Y]$ is a cut vertex of $U$.  Write $U$ as the union $U_1 \cup U_2$ where $U_1$ and $U_2$ are connected, and $[Y] = U_1 \cap U_2$.  As $\Wh_T(\calL,v)$ does not have a type ii admissible cut both of the subgraphs $U_1$ and $U_2$ have non-trivial monodromy.  Thus, there are embedded cycles $\gamma_1 \subseteq U_1$ and $\gamma_2 \subseteq U_2$, where $\Mon(\gamma_1)$ and $\Mon(\gamma_2)$ are non-trivial, i.e., the products of the labels along both $\gamma_1$ and $\gamma_2$ are non-trivial.  Let $\beta$ be the embedded path (possibly trivial) in $U$ that connects $\gamma_1$ to $\gamma_2$.  Necessarily the path $\beta$ contains $[Y]$.  The subgraph $\gamma_1 \cup \beta \cup \gamma_2$ lifts to an infinite tree in $\tU$ that contains $Y$ but no valence one vertices.  See the graph on the left in Figure~\ref{fig:infinite line}.  Hence there is an embedded line that contains $Y$.   
    
Next suppose that $[Y]$ is not a cut vertex of $U$.  Let $U_0$ be the biconnected component of $U$ that contains $[Y]$.   As $\Wh_T(\calL,v)$ does not have a type ii admissible cut, the subgraph $U_0$ has non-trivial monodromy.  

\medskip \noindent {\it Claim.} The vertex $[Y]$ lies on an embedded cycle $\gamma \subseteq U_0$ where $\Mon(\gamma)$ is non-trivial.

\begin{proof}[Proof of Claim]
Fix some embedded cycle $\gamma_0 \subseteq U_0$ where $\Mon(\gamma_0)$ is non-trivial.  Assuming that $[Y]$ does not lie on $\gamma_0$, there is an embedded path $\beta$ that contains $[Y]$ and only intersects $\gamma_0$ in its endpoints.  This follows as $U_0$ is biconnected.  The endpoints of $\beta$ decompose $\gamma_0$ into two subpaths $\gamma_0'$ and $\gamma_0''$.  Then $[Y]$ lies on the two cycles $\beta \cdot \gamma_0'$ and $\beta \cdot \gamma_0''$.  As $\gamma_0$ has non-trivial monodromy, at least one of these cycles has non-trivial monodromy.  See the graph on the right in Figure~\ref{fig:infinite line}.   
\end{proof}
 
The cycle $\gamma$ lifts to an embedded line in $\tU$ that contains $[Y]$.
\end{proof}

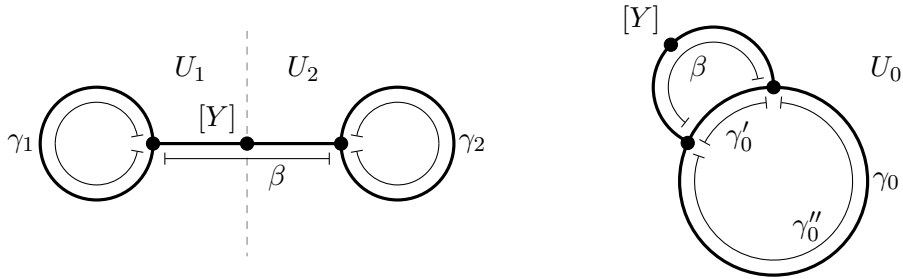
\begin{figure}[h]
\begin{tikzpicture}
\begin{scope}
\draw[gray,dashed] (0,1.5) -- (0,-1.5);
\node at (-0.75,1) {$U_1$}; 
\node at (0.75,1) {$U_2$};
\draw[very thick] (-1.25,0) -- (1.25,0);
\filldraw (-1.25,0) circle [radius=2.5pt];
\filldraw (0,0) circle [radius=2.5pt] node[above left] {$[Y]$} node[label={[xshift=0.4cm,yshift=-0.9cm]$\beta$}] {};
\filldraw (1.25,0) circle [radius=2.5pt];
\draw[very thick] (-2,0) circle [radius=0.75cm];
\draw[very thick] (2,0) circle [radius=0.75cm];
\node at (-3,0) {$\gamma_1$};
\node at (3,0) {$\gamma_2$};
\draw[-|] (-2.55,0) arc (180:10:0.55cm);
\draw[-|] (-2.55,0) arc (180:350:0.55cm);
\draw[-|] (2.55,0) arc (0:170:0.55cm);
\draw[-|] (2.55,0) arc (360:190:0.55cm);
\draw[|-|] (-1.1,-0.2) -- (1.1,-0.2);
\end{scope}
\begin{scope}[xshift=7cm,yshift=-0.5cm]
\draw[very thick] (-0.8,1.25) circle [radius=0.8cm];
\draw[very thick,fill=white] (0,0) circle [radius=1.25cm];
\filldraw (0,1.25) circle [radius=2.5pt];
\filldraw (156 : 1.25) circle [radius=2.5pt];
\filldraw (-1.365685425,1.815685425) circle [radius=2.5pt] node[above left] {$[Y]$};
\node at (1.5,0) {$\gamma_0$};
\node at (127 : 0.75) {$\gamma'_0$};
\node at (307 : 0.75) {$\gamma''_0$};
\node at (-1.0,1.5) {$\beta$};
\draw[|-|] (85 : 1.05) arc (85:-199:1.05cm);
\draw[|-|] (95 : 1.05) arc (95:151:1.05cm);
\draw[|-|] (-0.209115348,1.354188907) arc (10:232:0.6cm);
\node at (1.5,1.5) {$U_0$};
\end{scope}
\end{tikzpicture}
\caption{The two cases of Lemma~\ref{lem:infinite line}.}\label{fig:infinite line}
\end{figure}

Let $v$ be a vertex in $T$.  A \emph{finite star about $v$} is the minimal connected subset $Z \subset T$ that contains a finite set of points that are midpoints of edges incident on $v$.

\begin{lemma}\label{lem:whitehead stars}
Let $\calL$ be a periodic line collection.  Suppose that $T \in \calO$ is a Grushko tree that is Whitehead reduced for $\calL$ and $v \in V_\infty(T)$.  Let $Z \subset T$ be a finite star about $v$.  Then $\Wh_T(\calL,Z)$ is connected and no vertex is a cut vertex other than possibly $\hv$.  
\end{lemma}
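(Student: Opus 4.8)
The plan is to reduce the statement to a clean combinatorial model for $\Wh_T(\calL,Z)$ and then run a single embedded-line argument that yields connectivity and the cut-vertex property simultaneously. First I would set up notation. Write $\Gamma = T_v(\calL)$. The star $Z$ is $v$ together with the half-edges $[v,x_1],\dots,[v,x_r]$, where $x_i$ is the midpoint of an edge $e_i$ incident to $v$. Let $R = \{Y_1',\dots,Y_r'\} \subset \Gamma$ be the directions at $v$ meeting $Z$, with $Y_i'$ the direction containing $e_i$; these are finitely many, and the remaining vertices of $\Gamma$ are exactly the vertices of $T_v(\calL) - Z$. The key structural claim is that $\Wh_T(\calL,Z)$ is isomorphic to the quotient graph $\Gamma/{\approx}$ obtained by collapsing all vertices that lie in an infinite component of $T_v(\calL)-Z$ to the single vertex $\hv$. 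On vertices this is immediate: the finite-component directions together with $\hv$ form $V_Z(\calL,v)$, and the removed directions $Y_i'$ are in bijection with the midpoint directions $Y_{x_i}$ (the component of $T-Z$ past $x_i$), which are precisely the trivial-stabiliser directions of $T-Z$. For edges I would use that a geodesic axis does not backtrack: a line whose axis enters $Y_i'$ through $v$ must traverse the first edge $e_i$, cross the midpoint $x_i$, and therefore enter $Y_{x_i}$. Hence each line giving an edge of $\Gamma$ at $Y_i'$ gives exactly the corresponding edge of $\Wh_T(\calL,Z)$ at $[Y_{x_i}]$, and conversely; a line not crossing $v$ meets only one direction of $T-Z$ and so contributes no edge. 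Since each component of $\Gamma$ is infinite and $R$ is finite, $T_v(\calL)-Z$ has an infinite component, so $\hv$ is a genuine (nonempty) vertex.

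Next I would prove both conclusions at once via the following claim: for every vertex $w \neq \hv$ of $\Wh_T(\calL,Z)$ (equivalently, a single vertex of $\Gamma$ not collapsed by $\approx$) and every other vertex $u$, there is a path from $u$ to $\hv$ in $(\Gamma - \{w\})/{\approx}$. Granting this, $\hv$ survives the deletion of $w$ (as $w$ is not among the collapsed vertices), so $\Wh_T(\calL,Z) - \{w\}$ is connected and $w$ is not a cut vertex; the same argument with no vertex deleted gives connectivity of $\Wh_T(\calL,Z)$. To prove the claim, apply Lemma~\ref{lem:infinite line} to obtain an embedded bi-infinite line $\alpha$ in $\Gamma$ with $\alpha(0) = u$. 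Since $\alpha$ is injective and $w \neq u$ is a single vertex, one of the two rays of $\alpha$ issuing from $u$ avoids $w$; call it $\rho$. As $\rho$ is embedded and $R$ is finite, $\rho$ meets $R$ only finitely often, so a tail of $\rho$ is an infinite ray lying in the induced subgraph $T_v(\calL)-Z$; being connected and infinite, this tail lies in a single, necessarily infinite, component of $T_v(\calL)-Z$. That component is collapsed to $\hv$, so $\rho$ descends to a path from $u$ to $\hv$ in $(\Gamma-\{w\})/{\approx}$, as required.

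I expect the main obstacle to be the structural identification in the first step — in particular verifying that the removed directions, the midpoint directions, and the trivial-stabiliser directions of $T-Z$ match up correctly, and that the no-backtracking observation converts every edge of $\Gamma$ incident to $Y_i'$ into the correct edge at $[Y_{x_i}]$ without creating spurious edges. Once that isomorphism is established, Lemma~\ref{lem:infinite line} carries all the weight, and the embedded-ray argument disposes of connectivity and the cut-vertex statement in one stroke.
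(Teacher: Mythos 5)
Your proposal is correct and follows essentially the same route as the paper's proof: both arguments rest on Lemma~\ref{lem:infinite line}, using that an embedded line through any vertex of $T_v(\calL)$ has a tail avoiding the finitely many deleted directions and hence lands in an infinite component projecting to $\hv$, which gives connectivity, and using injectivity of the line to produce a path to $\hv$ avoiding any single non-$\hv$ vertex, which rules out cut vertices (the paper phrases this as two paths from $[Y']$ to $\hv$ of which only one can contain the putative cut vertex). Your explicit identification of $\Wh_T(\calL,Z)$ with the quotient of $T_v(\calL)$ collapsing infinite components of $T_v(\calL)-Z$ to $\hv$ is just a careful spelling-out of the vertex/edge correspondence the paper uses implicitly (via its ``abusing notation'' convention), so it is a presentational refinement rather than a different argument.
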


\begin{proof}
Enumerate the trivial directions at $Z$ by $Y_1,\ldots,Y_m$.  Each of these directions is contained in a unique direction at $v$.  Abusing notation, we will use $Y_1,\ldots,Y_m$ to denote these corresponding directions at $v$.  These are exactly the vertices deleted when forming $T_v(\calL) - Z$.  Notice that the vertex set of $\Wh_T(\calL,Z)$ is the union of $\{[Y_1],\ldots,[Y_m]\}$ with $V_Z(\calL,v)$.  

Fix a vertex $Y \in T_v(\calL)$ and let $\alpha_Y \from \RR \to T_v(\calL)$ be an embedded line such that $\alpha_Y(0) = Y$.  Such a line exists by Lemma~\ref{lem:infinite line}.  There is some $M \geq 0$ such that $\alpha_Y(j) \notin \{Y_1,\ldots,Y_m\}$ for all $j \geq M$.  In particular, the vertex $[\alpha_Y(M)]$ is $\hv$.  As $\alpha_Y(j-1)$ and $\alpha_Y(j)$ are adjacent in $T_v(\calL)$ for all $j \in \ZZ$, we have that the corresponding vertices $[\alpha_Y(j-1)]$ and $[\alpha_Y(j)]$ in $\Wh_T(\calL,Z)$ are also adjacent if they are distinct.  This shows that there is an edge path from $[Y]$ to $\hv$.  Hence $\Wh_T(\calL,Z)$ is connected.

Suppose that $Y$ is a vertex of $T_v(\calL)$ and $[Y] \neq \hv$ in $\Wh_T(\calL,Z)$.  Thus $Y$ is the only direction at $Z$ that corresponds to $[Y]$.  If $[Y]$ is a cut vertex of $\Wh_T(\calL,Z)$, then we can write $\Wh_T(\calL,Z)$ as the union of two graphs $U$ and $V$ where $U \cap V = [Y]$.  Without loss of generality, we can assume that $\hv \notin U$ and furthermore that $U$ is connected.  Take a vertex $[Y'] \in U$ other than $Y$ and let $\alpha_{Y'} \from \RR \to T_v(\calL)$ be an embedded line such that $\alpha_{Y'}(0) = Y'$.  Again, such a line exists by Lemma~\ref{lem:infinite line}.  As above there is an $M \geq 0$ such that $\alpha_{Y'}(j) \notin \{Y_1,\ldots,Y_m\}$ for $j \leq -M$ or $j \geq M$.  This gives two paths in $\Wh_T(\calL,Z)$ from $[Y']$ to $\hv$ but only one of them can contain $[Y]$ as $\alpha_{Y'}$ is an embedded line.  This is a contradiction, hence $[Y]$ is not a cut vertex.     
\end{proof}

\subsection{Relating the decomposition space to Whitehead graphs}\label{subsec:model}

The definition of the Whitehead graph $\Wh_T(\calL,X)$ is justified by the next lemma.

\begin{lemma}\label{lem:hull}
Let $\calL$ be a periodic line collection and let $P \subseteq \calD(\calL)$ be a finite subset where $q^{-1}(P) \cap V_\infty(G,\calA)$ is empty and where $q^{-1}(P)$ contains more than one point.  Suppose that $T \in \calO$ is a Grushko tree that is Whitehead reduced for $\calL$ and let $X \subset T$ be the convex hull of $q^{-1}(P)$.  Then there is a bijection between the connected components of $\Wh_T(\calL,X)$ and the connected components of $\calD(\calL) - P$.
\end{lemma}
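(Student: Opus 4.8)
The plan is to realize each component of $\calD(\calL)-P$ as the $q$-image of a union of the boundary sets $\bd Y$, indexed by the components of $\Wh_T(\calL,X)$. First I would fix the bookkeeping of the boundary. Because $q^{-1}(P)\cap V_\infty(G,\calA)$ is empty and $X$ is the convex hull of $q^{-1}(P)$, the ideal endpoints of $X$ are exactly the points of $q^{-1}(P)$, while every point of $\bd(G,\calA)$ other than those endpoints lies in the closure $\bd Y$ of a direction $Y\subset T-X$. I then attach to each vertex of $\Wh_T(\calL,X)$ a subset of $\calD(\calL)-P$: to a direction vertex $[Y]$ the image $q(\bd Y)$ with the points $q(v)$, $v\in V_\infty(T)\cap X$, deleted, and to each symbol $\hv$ the set $\{q(v)\}\cup\bigcup q(\bd Y)$, the union running over the directions $Y$ at $v$ lying in an infinite component of $T_v(\calL)-X$. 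Using the boundary bookkeeping I would verify that these pieces cover $\calD(\calL)-P$; note $v\notin q^{-1}(P)$, and by Lemma~\ref{lem:infinite line} every component of $T_v(\calL)$ is infinite, so $T_v(\calL)-X$, which deletes only the finitely many directions at $v$ meeting $X$, has an infinite component and $\hv$ carries a nonempty set.

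Next I would show each piece is connected and that adjacency of pieces matches edges. For a direction vertex $[Y]$, the set $\bd Y$ with its basepoint deleted is the boundary of the direction at the midpoint of the initial edge of $Y$, so its $q$-image is connected by Lemma~\ref{lem:decomposition connected subset}. For $\hv$, each infinite component of $T_v(\calL)-X$ is a connected graph whose edges are lines of $\calL$, so the corresponding sets $q(\bd Y)$ chain together into a connected set, and $q(v)$ is a common limit of all of them, since a sequence using distinct directions of an infinite component converges to $v$ in the observers topology; hence the $\hv$-piece is connected. Moreover, for distinct directions, $q(\bd Y)\cap q(\bd Y')\neq\emptyset$ exactly when a line $\ell\in\calL$ has one endpoint in each, that is, exactly when $\ell_T$ meets both and $[Y],[Y']$ span an edge of $\Wh_T(\calL,X)$. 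Consequently, for a connected subgraph $C$ the union $D_C$ of the pieces over its vertices is connected.

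The heart of the argument is to show that pieces belonging to distinct components $C\neq C'$ stay disjoint, and in fact have disjoint closures, so that a point $q(v)$ cannot secretly fuse two components. A shared point of two pieces is either the image of a line endpoint, which forces the two directions to span an edge and hence lie in one component, or a point $q(v)$ with $v\in V_\infty(T)\cap X$; by construction $q(v)$ belongs only to the $\hv$-piece. The real work is the accumulation analysis: I must show that $q(v)$ is a limit of a piece $D_u$ only when $u=\hv$. The key topological fact is that a sequence converges to $v$ in the observers topology only if it spreads over infinitely many directions at $v$. Since $T_v(\calL)$ is locally finite and only finitely many directions at $v$ meet $X$, there are only finitely many finite components of $T_v(\calL)-X$; thus no finite-component piece, and no piece based at another vertex, can accumulate at $v$, whereas the infinite components, which by Lemma~\ref{lem:infinite line} genuinely spread over infinitely many directions, do. This confines $q(v)$ to the component of $\hv$, and it is the step I expect to be the main obstacle, being precisely the phenomenon absent in the free-group case where there are no vertices of infinite stabilizer.

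Finally I would check that each $D_C$ is open, equivalently clopen, in $\calD(\calL)-P$. This uses that the sets $\bd Y$ are clopen in $\bd(G,\calA)$, that the identifications made by $q$ remain within a single component because they come from lines and hence from edges of $\Wh_T(\calL,X)$, and that $\calL$ has finitely many orbits with $X$ locally finite, together with the accumulation analysis above to control the finitely many new limit points $q(v)$. Then $\{D_C\}$ is a cover of $\calD(\calL)-P$ by pairwise disjoint, connected, relatively clopen sets; hence each $D_C$ is exactly a connected component, and $C\mapsto D_C$ is the required bijection.
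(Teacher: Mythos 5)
Your proposal is correct and follows the same skeleton as the paper's proof: the same assignment of pieces of $\calD(\calL)-P$ to vertices of $\Wh_T(\calL,X)$, connectivity of each boundary piece via Lemma~\ref{lem:decomposition connected subset}, chaining adjacent directions through shared line images, realizing $q(v)$ as an observers-topology limit of points spread over the infinitely many directions in an infinite component of $T_v(\calL)-X$, and the final connected-open-cover argument. Where you genuinely diverge is in the bookkeeping at vertices $v \in V_\infty(T) \cap X$: the paper includes the points $q(v_j)$ in every piece whose Whitehead component has a vertex based at $v_j$ and deduces closedness from openness plus covering, whereas you excise $q(v)$ from the direction pieces, assign it solely to the $\hv$--piece, and establish disjointness (indeed closure-disjointness) directly. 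This buys real precision: as literally stated, the paper's assertion that $q(\bd Y)$ is connected is delicate when $\bd_0 Y \in V_\infty(T)$, since then $v \in \bd Y$ while $q(v)$ is separated from $q(\bd Y - \{v\})$ by the direction at the midpoint of the initial edge of $Y$; deleting the base point, as you do, is exactly the right normalization, and Lemma~\ref{lem:decomposition connected subset} then applies verbatim as you indicate.

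The one place your sketch needs a short supplement is the accumulation step itself, which you correctly flag as the main obstacle. Your justification --- a sequence converges to $v$ only by spreading over infinitely many directions at $v$ --- is a statement about the observers topology on $\bd(G,\calA)$, while the assertion you need lives in the quotient $\calD(\calL)$, where $q(x_n) \to q(v)$ does not require $x_n \to v$ because of the identifications. The bridge is two lines: $q$ is a closed map, since $\bd(G,\calA)$ is compact and $\calD(\calL)$ is Hausdorff by Lemma~\ref{lem:boundary}, and $q^{-1}(q(v)) = \{v\}$ because lines identify only points of $\bd_\infty(G,\calA)$; hence $q(v) \in \overline{q(A)} = q\left(\overline{A}\right)$ if and only if $v \in \overline{A}$. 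This reduces your claim to the upstairs statement, which then holds for the reasons you give (only finitely many directions at $v$ meet $X$ or lie in finite components of $T_v(\calL)-X$, and the boundary of any single such direction is separated from $v$). Note also that identifications stay within a single Whitehead component for the reason you want but state glibly: since $\ell_T \cap X$ is connected, any line meeting a direction at $X$ has an ideal endpoint in that direction's boundary, so ``meets'' and ``has an endpoint in'' coincide for directions at $X$. With these two points made explicit, your argument is complete.
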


\begin{proof}
Suppose $Y$ is a direction at $X$ and let $\bd_0 Y = v$.  If $\Stab_T(v) = \{1\}$ or if $Y$ belongs to a finite component of $T_v(\calL) - X$, then $[Y] \neq \hv$ and moreover, the subset $q(\bd Y) \subset \calD(\calL) - P$ is connected by Lemma~\ref{lem:decomposition connected subset}.  
  
Next, let $v \in X$ be a vertex with $\Stab_T(v) \neq \{1\}$ and enumerate the directions at $v$ that belong to an infinite component of $T_v(\calL) - X$ by $\{Y_i\}$.  As above, we have that each subset $q(\bd Y_i) \subset \calD(\calL) - P$ is connected by Lemma~\ref{lem:decomposition connected subset}.   

\medskip
\noindent {\it Claim.} \ The set $\{q(v)\} \cup \bigcup_i q(\bd Y_i)$ is contained in a connected component of $\calD(\calL) - P$.  

\begin{proof}[Proof of Claim]
Observe that if, as vertices of $T_v(\calL) - X$, the directions $Y_i$ and $Y_j$ are adjacent, then $q(\bd Y_i) \cap q(\bd Y_j) \neq \emptyset$.  Indeed, as these directions are adjacent in $T_v(\calL) - X$ there is a line $\ell \in \calL$ such that $\ell_T$ meets both $Y_i$ and $Y_j$ and whose endpoints do not lie in $P$.  The endpoints of this line lie in $\bd Y_i$ and $\bd Y_j$ respectively and are mapped by $q$ to the same point in $\calD(\calL)$.  Continuing in this manner, it follows that if $Y_i$ and $Y_j$ lie in the same component of $T_v(\calL) - X$ then $q(\bd Y_i)$ and $q(\bd Y_j)$ lie in the same component of $\calD(\calL) - P$.    

Let $\{Y_{i_j}\}$ be an enumeration of the directions at $v$ that belong to a fixed infinite component of $T_v(\calL) - X$.  To complete the proof of the claim we will show that $q(v)$ belongs to the same component of $\calD(\calL) - P$ as $\bigcup_j q(\bd Y_{i_j})$.  To see this, fix points $y_j \in \bd Y_{i_j}$.  As $y_j \to v$ in the observers topology, we have that $q(y_j) \to q(v)$ since $q$ is continuous and therefore $q(v)$ lies in the same component as $\bigcup_j q(\bd Y_{i_j})$.
\end{proof}

Now consider a connected component $U \subseteq \Wh_T(\calL,X)$.  Let $\{Y_i\}$ be the complete collection of directions at $X$ that correspond to vertices in $U$ and let $\{v_j\}$ be the collection of vertices in $X$ where $\Stab_T(v) \neq \{1\}$ and such that $v_j = \bd_0 Y_i$ for some $Y_i$.  Then the above observations imply that the set:
\begin{equation}\label{eq:union}
\bigcup_j q(v_j) \cup \bigcup_iq(\bd Y_i)
\end{equation}
is connected and open in $\calD(\calL) - P$.  As the same is true for any connected component of $\Wh_T(\calL,X)$ and as $\bd(G,\calA) - q^{-1}(P)$ is the union of $q(\bd Y)$ over all directions $Y \subset T - X$ together with the union of $q(v)$ over all vertices $v \in V_\infty(T)$, we see that the set in~\eqref{eq:union} is also closed and therefore a connected component of $\calD(\calL) - P$.  
\end{proof}

\subsection{Splicing Whitehead graphs}\label{subsec:splicing}

There is a splicing procedure for building $\Wh_T(\calL,X)$ as described by Manning~\cite{ar:Manning10} and used by Cashen--Macura~\cite{ar:CM11} in the setting where $\calA = \emptyset$.  This procedure allows one to study $\Wh_T(\calL,X)$ by considering the Whitehead graphs relative to the vertices in $X$.  We will describe a modification of splicing for the free-product setting.  In this setting, we consider a locally finite subtree $X \subset T$ as being constructed by gluing together finite stars.  

To this end let $X \subset T$ be a finite subtree and let $Z \subset X$ be a finite star about some vertex $v$ in $X$ such that $X' = X - Z$ is connected.  We will describe how to construct $\Wh_T(\calL,X)$ using $\Wh_T(\calL,X')$ and $\Wh_T(\calL,Z)$.  Let $Y \subset T - X'$ be the direction that contains $Z$ and let $Y' \subset T - Z$ be the direction that contains $X'$.  We observe that $\bd_0Y = \bd_0Y'$ is the midpoint of an edge, which we denote by $e$.  As the stabilizer of this point is trivial, we see that $Y$ is the only direction at $X'$ that represents the vertex $[Y]$ in $\Wh_T(\calL,X')$.  Likewise $Y'$ is the only direction at $Z$ that represents $[Y']$ in $\Wh_T(\calL,Z)$.  Let $\calL_e$ be the collection of lines in $\calL$ such that $\ell_T$ contains $e$.  Each line in $\calL_e$ corresponds to a unique edge incident to $[Y]$ in $\Wh_T(\calL,X')$ and also to a unique edge incident to $[Y']$ in $\Wh_T(\calL,Z)$.  This defines a bijection between the set of edges incident to $[Y]$ and the set of edges incident to $[Y']$.  Now we remove the vertex $[Y]$ from $\Wh_T(\calL,X')$ and the vertex $[Y']$ from $\Wh_T(\calL,Z)$, maintaining the edges with ``loose ends.''  We will denote these graphs with loose ends by $\Wh_T(\calL,X') - \{[Y]\}$ and $\Wh_T(\calL,Z) - \{[Y']\}$ respectively.  We now \emph{splice} together these loose ends using the bijection coming from $\calL_e$.  The resulting graph is $\Wh_T(\calL,X)$.  

\begin{example}\label{ex:splicing}
We consider the free product $(G,\calA)$, Grushko tree $T \in \calO$ and element $g \in G$ as in Examples~\ref{ex:whitehead graph} and \ref{ex:whitehead graph subtree}.  We will make use of the notation from those examples.  Let $p$ be the midpoint of the edge $e_1$.  We can decompose $X$ along this point and write $X$ as the union of two finite stars $Z$ and $Z'$ where $v \in Z$, $w \in Z'$ and $Z \cap Z'  = \{p\}$.  The Whitehead graphs $\Wh_T(\calL_g,Z)$ and $\Wh_T(\calL_g,Z')$ are shown in Figure~\ref{fig:whitehead graph stars}.  We obtain $\Wh_T(\calL_g,X)$ by removing the vertex $[Y_b^+]$ from $\Wh_T(\calL_g,Z)$ and vertex $[Z_b^-]$ and splicing the like colored loose ends together.

\begin{figure}[h]
\begin{tikzpicture}
\begin{scope}
\node[draw=none,minimum size=3cm,regular polygon,regular polygon sides=4,rotate=-45] at (0,0) (a) {};
\draw[very thick,blue] (a.corner 1) -- (a.corner 2);
\draw[very thick,red] (a.corner 2) -- (a.corner 3);
\draw[very thick,violet] (a.corner 3) -- (a.corner 4);
\draw[very thick,teal] (a.corner 4) -- (a.corner 1);
\node at (a.corner 1) [label=right:{$[Y_b^+]$}] {};
\node at (a.corner 2) [label=above:{$[Y_c^+]$}] {};
\node at (a.corner 3) [label=left:{$[a^{-1}W_b^-]$}] {};
\node at (a.corner 4) [label=below:{$\hv$}] {};
\foreach \x in {1,2,3,4} {
	\fill (a.corner \x) circle[radius=2pt];
}
\end{scope}
\begin{scope}[xshift=7cm]
\node[draw=none,minimum size=3cm,regular polygon,regular polygon sides=6,rotate=0] at (0,0) (a) {};
\draw[very thick,black!30!green] (a.corner 1) -- (a.corner 2);
\draw[very thick,blue] (a.corner 2) -- (a.corner 3);
\draw[very thick,teal] (a.corner 3) -- (a.corner 4);
\draw[very thick,brown] (a.corner 4) -- (a.corner 5);
\draw[very thick,cyan] (a.corner 5) -- (a.corner 6);
\draw[very thick,purple] (a.corner 6) -- (a.corner 1);
\node at (a.corner 1) [label=above:{$[a^{-3}Z_b^+]$}] {};
\node at (a.corner 2) [label=above:{$[W_c^-]$}] {};
\node at (a.corner 3) [label=left:{$[Z_b^-]$}] {};
\node at (a.corner 4) [label=below:{$\hw$}] {};
\node at (a.corner 5) [label=below:{$[a^{-4}W_b^-]$}] {};
\node at (a.corner 6) [label=right:{$[a^{-3}Z_c^+]$}] {};
\foreach \x in {1,2,...,6} {
	\fill (a.corner \x) circle[radius=2pt];
}
\end{scope}
\end{tikzpicture}
\caption{The Whitehead graphs $\Wh_T(\calL_g,Z)$ and $\Wh_T(\calL_g,Z')$ in Example~\ref{ex:splicing}.}\label{fig:whitehead graph stars}
\end{figure}

\end{example}

\subsection{Identifying cut sets in $\calD(\calL)$ via Whitehead graphs}\label{subsec:identify cut sets}

Suppose that $T \in \calO$ is Whitehead reduced for $\calL$ and $v$ is a vertex of $T$.  Using the Whitehead graph $\Wh_T(\calL,v)$, we can identify two types of cut sets in $\calD(\calL)$.  These will be used in the next two sections to break the decomposition space up along cut points and to characterize quadratic elements in $G$.

\begin{definition}\label{def:peripheral cut}
If either:
\begin{enumerate}[label=(\roman*)]
\item $\Wh_T(\calL,v)$ is disconnected, or 
\item $\Wh_T(\calL,v)$ is connected and $\Mon(\Wh_T(\calL,v))$ is not equal to $\Stab_T(v)$,
\end{enumerate} 
then $q(v) \in \calD(\calL)$ is called a \emph{peripheral cut point}.
\end{definition}

\begin{definition}\label{def:edge cut}
Fix an edge $e$ that is incident to $v$ and let $\calL_e \subset \calL$ be the collection of lines in $\calL$ such that $\ell_T$ contains $e$ for each $\ell \in \calL_e$.  Then the set $q(\calL_e) \subset \calD(\calL)$ is called an \emph{edge cut set}.  
\end{definition}

\begin{lemma}\label{lem:parabolic cut}
Suppose that $\calL$ is a periodic line collection and that $\calD(\calL)$ is connected.  A peripheral cut point is a cut point.  Moreover, a point $x \in \calD(\calL)$ is a cut point only if either $x$ is a peripheral cut point or $x = q(\ell)$ for some $\ell \in \calL$.   
\end{lemma}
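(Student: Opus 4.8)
The plan is to fix, via Proposition~\ref{prop:decomposition connected}, a Grushko tree $T \in \calO$ that is Whitehead reduced for $\calL$, and to classify each point $x \in \calD(\calL)$ by its fiber $q^{-1}(x)$: either (a) $q^{-1}(x) = \{v\}$ for a single vertex $v \in V_\infty(G,\calA)$, (b) $q^{-1}(x) = \{\xi\}$ for a single $\xi \in \bd_\infty(G,\calA)$ that is not an endpoint of any line of $\calL$, or (c) $q^{-1}(x)$ is the pair of endpoints of some $\ell \in \calL$, i.e.\ $x = q(\ell)$. Both assertions concern only cases (a) and (b), since (c) is the escape clause $x = q(\ell)$. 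The heart of the matter is case (a), and my first step would be to identify the connected components of $\calD(\calL) - \{q(v)\}$ with those of the graph $T_v(\calL)$.

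For this, since $q^{-1}(q(v)) = \{v\}$ we have $\calD(\calL) - \{q(v)\} = q\bigl(\bd(G,\calA) - \{v\}\bigr) = \bigcup_Y q(\bd Y - \{v\})$, the union running over the directions $Y$ at $v$. For each $Y$, writing $x_Y$ for the midpoint of the edge of $Y$ incident to $v$ and $Y_{x_Y}$ for the direction at $x_Y$ disjoint from $v$, one checks that $\bd Y_{x_Y} = \bd Y - \{v\}$, so Lemma~\ref{lem:decomposition connected subset} makes each piece $q(\bd Y - \{v\})$ connected. Distinct directions have disjoint boundaries, so $q(\bd Y - \{v\})$ and $q(\bd Y' - \{v\})$ meet precisely when some line of $\calL$ has one endpoint in $\bd Y$ and the other in $\bd Y'$, that is, exactly when $Y$ and $Y'$ are joined by an edge of $T_v(\calL)$. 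Following the same open--closed bookkeeping as in the proof of Lemma~\ref{lem:hull} (the relevant saturated sets are clopen and partition $\calD(\calL)-\{q(v)\}$), this yields a bijection between the components of $\calD(\calL) - \{q(v)\}$ and those of $T_v(\calL)$.

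Next I would convert this into a monodromy count. Because edge stabilizers are trivial, $\Stab_T(v)$ acts freely on the directions at $v$, hence freely on $T_v(\calL)$, with quotient $\Wh_T(\calL,v)$; the preimages of a component $U$ of $\Wh_T(\calL,v)$ form a single $\Stab_T(v)$--orbit of $[\Stab_T(v):\Mon(U)]$ components of $T_v(\calL)$. Thus the number of components of $\calD(\calL) - \{q(v)\}$ equals $\sum_U [\Stab_T(v):\Mon(U)]$ over the components $U$ of $\Wh_T(\calL,v)$. If $q(v)$ is a peripheral cut point, then either $\Wh_T(\calL,v)$ is disconnected (at least two summands, each $\geq 1$) or it is connected with $\Mon(\Wh_T(\calL,v))$ a proper subgroup of $\Stab_T(v)$ (a single summand of index $\geq 2$); in both cases the sum is at least $2$, so $q(v)$ is a cut point. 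Conversely, if $q(v)$ is \emph{not} a peripheral cut point, then $\Wh_T(\calL,v)$ is connected with $\Mon(\Wh_T(\calL,v)) = \Stab_T(v)$, the sum is $1$, and $q(v)$ is not a cut point.

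Finally I would settle case (b). For $\xi \in \bd_\infty(G,\calA)$ not a line endpoint we again have $q^{-1}(q(\xi)) = \{\xi\}$, so $\calD(\calL) - \{q(\xi)\} = q\bigl(\bd(G,\calA) - \{\xi\}\bigr)$. Marching edges $e_1, e_2, \ldots$ out along a ray to $\xi$ with midpoints $x_n$, and letting $Y'_n$ be the direction at $x_n$ not containing $\xi$, one gets $\bd Y'_1 \subseteq \bd Y'_2 \subseteq \cdots$ with $\bigcup_n \bd Y'_n = \bd(G,\calA) - \{\xi\}$. Each $q(\bd Y'_n)$ is connected by Lemma~\ref{lem:decomposition connected subset}, the sets increase, and so their union $\calD(\calL) - \{q(\xi)\}$ is connected; hence $q(\xi)$ is not a cut point. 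Assembling the three cases gives both assertions. I expect the main obstacle to be the structural step in case (a): checking that the pieces $q(\bd Y - \{v\})$ are exactly the connected blocks of $\calD(\calL)-\{q(v)\}$, that they are glued precisely along the edges of $T_v(\calL)$ (and are clopen, so that the component count is literal), and that the free $\Stab_T(v)$--action faithfully converts this count into the sum $\sum_U [\Stab_T(v):\Mon(U)]$.
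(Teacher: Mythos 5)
Your argument is correct, and it splits naturally into a half that matches the paper and a half that is genuinely different. For the vertex case your proof is essentially the paper's: the paper's proof of Lemma~\ref{lem:parabolic cut} also identifies the components of $\calD(\calL) - \{q(v)\}$ with those of $T_v(\calL)$ via the saturated clopen sets $C_\tU = \bigcup_{Y \in V(\tU)} q(\bd Y)$, uses Lemma~\ref{lem:decomposition connected subset} plus adjacency in $T_v(\calL)$ to see each piece is connected, and observes that either condition in Definition~\ref{def:peripheral cut} forces $T_v(\calL)$ to be disconnected while their failure forces it to be connected (your explicit count $\sum_U [\Stab_T(v):\Mon(U)]$ is a refinement the paper leaves implicit, and your care with $\bd Y_{x_Y} = \bd Y - \{v\}$ is actually slightly more scrupulous than the paper, whose sets $C_\tU$ tacitly omit $q(v)$). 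Where you diverge is the "moreover" direction: the paper does not argue case (b) at all, but instead invokes Lemma~\ref{lem:boundary} together with the Dasgupta--Hruska theorem that every cut point of the Bowditch boundary of a relatively hyperbolic group is fixed by a conjugate of a peripheral subgroup~\cite[Theorem~1.1]{un:DH}, which immediately restricts cut points to $q(v)$ with $v \in V_\infty(G,\calA)$ or $q(\ell)$ with $\ell \in \calL$. Your replacement --- exhausting $\bd(G,\calA) - \{\xi\}$ by the nested boundaries $\bd Y'_1 \subseteq \bd Y'_2 \subseteq \cdots$ of directions marching out along a ray to $\xi$, each $q(\bd Y'_n)$ connected by Lemma~\ref{lem:decomposition connected subset}, so that the increasing union $\calD(\calL) - \{q(\xi)\}$ is connected --- is valid and elementary: since $\xi$ is not a line endpoint, $q^{-1}(q(\xi)) = \{\xi\}$, the $\bd Y'_n$ exhaust $\bd(G,\calA) - \{\xi\}$, and each is nonempty by minimality of the $G$--action. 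What each approach buys: the paper's citation is shorter and reuses machinery already needed elsewhere (the same theorem of Dasgupta--Hruska drives Lemma~\ref{lem:cut set} and the cut point tree of Section~\ref{sec:cut points}), while your route makes this particular lemma self-contained, needing only the fiber trichotomy for $q$ (singleton vertex, singleton interior end, or a fiber meeting a line, in which case $x = q(\ell)$ and the escape clause applies --- note this trichotomy holds without any disjointness claim about line endpoints, since any fiber containing an endpoint of $\ell$ gives $x = q(\ell)$). The only points you flagged as obstacles --- clopenness of the saturated unions and the regular-covering count --- go through exactly as in the paper's Section~\ref{sec:model}, so there is no gap.
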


\begin{proof}
First we observe that peripheral cut points are indeed cut points of $\calD(\calL)$.  To this end, let $v \in T$ be a vertex such that $q(v)$ is a peripheral cut point.  Notice that either of the conditions in Definition~\ref{def:peripheral cut} imply that $T_v(\calL)$ is not connected.  Let $\tU \subset T_v(\calL)$ be a connected component.  Then the set:
\begin{equation*}
C_\tU = \bigcup_{Y \in V(\tU)} q(\bd(Y)) \subset \calD(\calL)
\end{equation*}
is an open set.  Indeed, the union of the sets $\bd(Y) \subset \bd(G,\calA)$ over the directions at $v$ determined by $\tU$ is an open saturated set.  Further it is connected by Lemma~\ref{lem:decomposition connected subset} and since for $Y,Y' \in V(\tU)$ that are adjacent in $\tU$, we have that $q(\bd Y) \cap q(\bd Y')$ is non-empty.  As $\calD(\calL) - \{q(v)\}$ is the union of $C_{\tU'}$ over all components $\tU' \subset T_v(\calL)$ and $C_\tU \cap C_{\tU'} = \emptyset$ for $\tU \neq \tU'$, we see that $C_\tU$ is closed as well.  This shows that $q(v)$ is indeed a cut point.  

Next, suppose that $x \in \calD(\calL)$ is a cut point.  By Lemma~\ref{lem:boundary}, the decomposition space $\calD(\calL)$ is the boundary of the relatively hyperbolic group $(G,\calA \cup \calN_\calL)$.  Dasgupta--Hruska proved that every cut point of the boundary of a relatively hyperbolic group is fixed by a subgroup conjugate into $\calA \cup \calN_\calL$~\cite[Theorem~1.1]{ar:DH}.  Hence either $x = q(v)$ for some $v \in V_\infty(G,\calA)$ or $x = q(\ell)$ for some $\ell \in \calL$.  We claim in the former case that $q(v)$ is a peripheral cut point.  Indeed, by Proposition~\ref{prop:decomposition connected}, there exists a Grushko tree $T \in \calO$ that is Whitehead reduced for $\calL$.  Suppose that $q(v)$ is not a peripheral cut point.  Then $\Wh_T(\calL,v)$ is connected and
$\Mon(\Wh_T(\calL,v)) = \Stab_T(v)$.   Hence $T_v(\calL)$ is connected and the above argument shows that $\calD(\calL) - \{q(v)\}$ is connected as well.  Indeed, in the notation above we have $C_\tU = \calD(\calL) - \{q(v)\}$.  This contradicts the assumption that $q(v)$ is a cut point.  Hence we have that $q(v)$ is a peripheral cut point.
\end{proof}

We need the following notation.  Suppose that $X \subset T$ is a locally finite tree and $\calL'$ is a subset of lines in $\calL$.  Then $\Wh_T(\calL,X) - \calL'$ is the subgraph of $\Wh_T(\calL,X)$ obtained by removing every edge that corresponds to a line in $\calL'$. 

\begin{lemma}\label{lem:edge cut set}
Let $\calL$ be a periodic line collection.  If $\calD(\calL)$ is connected and every cut point is a peripheral cut point, then an edge cut set is a cut set.
\end{lemma}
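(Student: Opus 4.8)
The plan is to fix a Grushko tree $T \in \calO$ that is Whitehead reduced for $\calL$, which exists by Proposition~\ref{prop:decomposition connected} since $\calD(\calL)$ is connected, and to work with the edge $e$ incident to $v$, its midpoint $x$, and the two directions $Y, Y'$ at $x$. First I would note that $\calL_e \neq \emptyset$: otherwise $e$ is crossed by no line and $\calD(\calL)$ is disconnected as in the proof of Lemma~\ref{lem:decomposition disconnected}, contrary to hypothesis. To see that $\calD(\calL) - q(\calL_e)$ is disconnected, observe $\bd(G,\calA) = \bd Y \sqcup \bd Y'$ and that a line $\ell \in \calL$ has endpoints on both sides of $x$ exactly when $\ell_T$ crosses $e$, i.e. $\ell \in \calL_e$; hence $q(\bd Y) \cap q(\bd Y') = q(\calL_e)$. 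Setting $C_Y = \bd Y \setminus q^{-1}(q(\calL_e))$ and $C_{Y'} = \bd Y' \setminus q^{-1}(q(\calL_e))$, the images $q(C_Y)$ and $q(C_{Y'})$ are nonempty, disjoint, and partition $\calD(\calL) - q(\calL_e)$. They are open: restricting $q$ to the saturated open set $\bd(G,\calA) - q^{-1}(q(\calL_e))$ gives a quotient map, and the saturation of $C_Y$ inside this set is just $C_Y$ (any identification across the two sides must run through a line of $\calL_e$, whose endpoints have been deleted), which is open since $\bd Y$ is clopen. Thus $\calD(\calL) - q(\calL_e)$ is disconnected.

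For minimality it suffices to prove that $q(C_Y)$ and $q(C_{Y'})$ are each connected. Indeed, each point $q(\ell)$ with $\ell \in \calL_e$ lies in the closure of both $q(C_Y)$ and $q(C_{Y'})$, being the image of $\ell^Y \in \bd Y$ and of $\ell^{Y'} \in \bd Y'$. So if the two sides are connected, then for any $\ell_0 \in \calL_e$ the set $q(C_Y) \cup q(C_{Y'}) \cup \{q(\ell_0)\} = \calD(\calL) - (q(\calL_e) \setminus \{q(\ell_0)\})$ is connected, with $q(\ell_0)$ bridging the two sides. Any proper subset $P' \subsetneq q(\calL_e)$ is contained in some $q(\calL_e) \setminus \{q(\ell_0)\}$, and $\calD(\calL) - P'$ is obtained from the connected set $\calD(\calL) - (q(\calL_e) \setminus \{q(\ell_0)\})$ by adjoining limit points, so it too is connected. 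This reduction is sharp: were a side disconnected, removing all join points but one would isolate a piece, and minimality would fail.

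The heart of the argument, and the step where I expect the real difficulty, is the connectivity of $q(C_Y)$ (and symmetrically $q(C_{Y'})$). Here I would apply Lemma~\ref{lem:hull} with $P = q(\calL_e)$ and $X$ the convex hull of $q^{-1}(P)$; its hypotheses hold because endpoints of lines lie in $\bd_\infty T \subset \bd(G,\calA) \setminus V_\infty(G,\calA)$ and $\calL_e$ is finite and nonempty. Then components of $\calD(\calL) - q(\calL_e)$ correspond to components of $\Wh_T(\calL, X)$, and since no line outside $\calL_e$ crosses $e$ while the lines of $\calL_e$ contribute no edges (both endpoints lie in $X$), no edge of $\Wh_T(\calL, X)$ joins a $Y$-side direction to a $Y'$-side direction; thus $q(C_Y)$ is connected precisely when the $Y$-side subgraph is. To establish the latter I would build that subgraph by the splicing procedure of Section~\ref{subsec:splicing} out of the Whitehead graphs of finite stars, which are connected by Lemma~\ref{lem:whitehead stars} together with Whitehead-reducedness at trivial-stabilizer vertices, using that every edge of $T$ is crossed by some line of $\calL$. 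The one obstruction to connectivity would be a \emph{saturated} $Y$-side edge $e''$, all of whose crossing lines have both endpoints in $X$; such an edge yields its own separation of $\calD(\calL)$. This is exactly where the standing hypothesis must be used: I would argue that a saturated $Y$-side edge forces a cut point of the form $q(\ell)$, which by Lemma~\ref{lem:parabolic cut} is a non-peripheral cut point, contradicting the assumption that every cut point is peripheral. The careful bookkeeping that locates this offending line and shows that a failure of the splicing to stay connected must produce such a $q(\ell)$, combining Lemmas~\ref{lem:infinite line} and~\ref{lem:whitehead stars} with the no-non-peripheral-cut-point hypothesis, is the main obstacle of the proof.
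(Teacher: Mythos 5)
Your reduction is sound and in fact follows the paper's own skeleton: the paper also works with $X = \bigcup_{\ell \in \calL_e} \ell_T$ (which coincides with your convex hull, since every axis of a line in $\calL_e$ contains $e$), applies Lemma~\ref{lem:hull} with $P = q(\calL_e)$, and concludes by showing that $\Wh_T(\calL,X)$ has exactly two components with every $\calL_e$-line bridging them. But the central step --- connectivity of each side of $\Wh_T(\calL,X)$ --- is exactly what you defer, and the mechanism you propose for it would fail. The obstruction is not a single ``saturated'' edge. Since $X$ contains the entire bi-infinite axis $\ell_T$ of each $\ell \in \calL_e$, forming $\Wh_T(\calL,X)$ deletes, at every vertex along $\ell_T$, the two directions containing the axis; by Lemma~\ref{lem:loose ends} the resulting local graph can fall into several components even though $T$ is Whitehead reduced and every edge of $T$ is crossed by lines outside $\calL_e$. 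Splicing these local pieces along the axis can then braid into several disjoint ``strands'' running out to infinity --- a disconnection witnessed by no single edge $e''$ with $\calL_{e''} \subseteq \calL_e$. (That this configuration genuinely occurs is the whole point of Proposition~\ref{prop:small cut pairs}, where $\Wh_T(\calL_g,T_h)$ is disconnected although $T$ is Whitehead reduced.) Moreover, even granted a saturated edge $e''$, you would only conclude that the smaller set $q(\calL_{e''})$ separates, which threatens minimality but produces a cut point of the form $q(\ell)$ --- the contradiction you want --- only in the special case that $\calL_{e''}$ is a single line.

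The paper closes this gap by deploying the hypothesis globally, once per line, rather than edge by edge: for each $\ell \in \calL_e$, the point $q(\ell)$ is not a cut point (it is not of the form $q(v)$ with $v \in V_\infty(G,\calA)$, so by assumption and Lemma~\ref{lem:parabolic cut} it cuts nothing), whence Lemma~\ref{lem:hull} applied to the single point $P = \{q(\ell)\}$ shows $\Wh_T(\calL,\ell_T)$ is connected; then, because the stabilizer of $\ell_T$ acts cocompactly on $\Wh_T(\calL,\ell_T)$, each half-ray graph $\Wh_T(\calL,R_\ell^\pm) - \{[Y_\ell^\pm]\}$, being an end of that connected graph, is itself connected. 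This is what rules out the strands-at-infinity scenario, and it reduces the problem to a finite one: splice the connected ray graphs onto the core $Z$, prune $Z$ down to $\he$ star by star using Lemma~\ref{lem:whitehead stars} (together with no-admissible-cut at trivial vertices), and read off that $\Wh_T(\calL,\he) - \calL_e$ has exactly two components with each $\calL_e$-edge joining them --- giving separation and minimality simultaneously. Your clopen-partition argument for disconnectedness and your bridging argument reducing minimality to side-connectivity are both fine; to have a proof you must supply the two sub-arguments above, which constitute the actual content of the lemma.
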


\begin{proof}
The statement and proof are similar to statements by Cashen--Macura~\cite[Lemma~4.20 \& Proposition~4.21]{ar:CM11}.

Fix an edge $e \subset T$ and let $\calL_e \subset \calL$ be the collection of lines in $\calL$ as given by Definition~\ref{def:edge cut}.  Let $Y \subset T - \{v\}$ be the direction at $v$ that meets $e$. 
  
Let
\begin{equation*}
X = \bigcup_{\ell \in \calL_e} \ell_T \subset T.
\end{equation*}  
By Lemma~\ref{lem:hull} we have that there is a bijection between the components of $\calD(\calL) - q(\calL_e)$ and $\Wh_T(\calL,X)$.

Let $Z_0 \subset X$ be the minimal closed subset such that $X$ equals the union of $Z_0$ together with the images of the $2\abs{\calL_e}$ geodesic rays $\rho_\ell^\pm \from [0,\infty] \to \hT$ with disjoint interiors where $\rho_\ell^\pm(\infty) = \ell_T^\pm$.  This set $Z_0$ is what Cashen--Macura call the \emph{core}~\cite[Section~4.6]{ar:CM11}.  We need to modify it slightly for our purposes.  We set
\begin{equation*}
Z = Z_0 \cup \bigcup_{\ell \in \calL_e} \rho_\ell^+([0,1/2]) \cup \bigcup_{\ell \in \calL_e} \rho_\ell^-([0,1/2]).
\end{equation*}    
In other words, we add on the initial half-edge of each of the geodesic rays $\rho_\ell^\pm$ to the set $Z_0$.  See Figure~\ref{fig:edge cut}.  Note that $Z$ is the finite union of finite stars.  We remark that each line $\ell \in \calL_e$ determines an edge in $\Wh_T(\calL,Z)$ that is incident to two trivial vertices.

\begin{figure}[h]
\begin{tikzpicture}
\def\ee{1.8}
\def\sqr{0.707106781}
\node at (0,0) [above=0.25] {$e$};
\draw[black!30!green,line width=4mm,line cap=round] (-1.5*\ee-1.5*\sqr*\ee,1.5*\sqr*\ee) -- (-1.5*\ee,0) -- (3*\ee,0);
\draw[black!30!green,line width=4mm,line cap=round] (-2*\ee-\sqr*\ee,\sqr*\ee) -- (-1.5*\ee-\sqr*\ee,\sqr*\ee); 
\draw[black!30!green,line width=4mm,line cap=round] (-2*\ee,0) -- (-1.5*\ee,0); 
\draw[black!30!green,line width=4mm,line cap=round] (1.5*\ee,0) -- (1.5*\ee+0.5*\sqr*\ee,0.5*\sqr*\ee); 
\draw[black!30!green,line width=4mm,line cap=round] (2.5*\ee,0) -- (2.5*\ee+0.5*\sqr*\ee,0.5*\sqr*\ee); 
\draw[yellow,line width=2.5mm,line cap=round] (-1.5*\ee-\sqr*\ee,\sqr*\ee) -- (-1.5*\ee,0) -- (2.5*\ee,0);
\draw[very thick] (-4*\ee,0) -- (4*\ee,0);
\draw[very thick] (-3*\ee-\sqr*\ee,\sqr*\ee) -- (-1.5*\ee-\sqr*\ee,\sqr*\ee) -- (-1.5*\ee,0) (-1.5*\ee-\sqr*\ee,\sqr*\ee) -- (-1.5*\ee-2.5*\sqr*\ee,2.5*\sqr*\ee);
\draw[very thick] (1.5*\ee,0) -- (1.5*\ee+2.5*\sqr*\ee,2.5*\sqr*\ee);
\draw[very thick] (2.5*\ee,0) -- (2.5*\ee+1.5*\sqr*\ee,1.5*\sqr*\ee);
\draw[thick,blue] (-4*\ee,-0.4) -- (4*\ee,-0.4);
\draw[thick,blue] (-1.5*\ee-2.5*\sqr*\ee+\sqr*0.3,2.5*\sqr*\ee+\sqr*0.3) -- (-1.5*\ee+0.1,0.3) -- (1.5*\ee-0.1,0.3) -- (1.5*\ee+2.5*\sqr*\ee-0.3*\sqr,2.5*\sqr*\ee+0.3*\sqr);
\draw[thick,blue] (-3*\ee-\sqr*\ee,\sqr*\ee-0.3) -- (-1.5*\ee-\sqr*\ee-0.1,\sqr*\ee-0.3) -- (-1.5*\ee-0.1,-0.3) -- (2.5*\ee+0.1,-0.3) -- (2.5*\ee+1.5*\sqr*\ee+0.3,1.5*\sqr*\ee-0.3);
\filldraw (-0.5*\ee,0) circle [radius=2.5pt];
\filldraw (0.5*\ee,0) circle [radius=2.5pt];
\filldraw (1.5*\ee,0) circle [radius=2.5pt];
\filldraw (-1.5*\ee,0) circle [radius=2.5pt];
\filldraw (2.5*\ee,0) circle [radius=2.5pt];
\filldraw (-2.5*\ee,0) circle [radius=2.5pt];
\filldraw (-3.5*\ee,0) circle [radius=2.5pt];
\filldraw (3.5*\ee,0) circle [radius=2.5pt];
\filldraw (-1.5*\ee-2*\sqr*\ee,2*\sqr*\ee) circle [radius=2.5pt];
\filldraw (-2.5*\ee-\sqr*\ee,\sqr*\ee) circle [radius=2.5pt];
\filldraw (-1.5*\ee-\sqr*\ee,\sqr*\ee) circle [radius=2.5pt];
\filldraw (1.5*\ee+\sqr*\ee,\sqr*\ee) circle [radius=2.5pt];
\filldraw (1.5*\ee+2*\sqr*\ee,2*\sqr*\ee) circle [radius=2.5pt];
\filldraw (2.5*\ee+\sqr*\ee,\sqr*\ee) circle [radius=2.5pt];
\draw[thick,fill=white] (-2*\ee,0) circle [radius=2.5pt];
\draw[thick,fill=white] (-2*\ee-\sqr*\ee,\sqr*\ee) circle [radius=2.5pt];
\draw[thick,fill=white] (-1.5*\ee-1.5*\sqr*\ee,1.5*\sqr*\ee) circle [radius=2.5pt];
\draw[thick,fill=white] (3*\ee,0) circle [radius=2.5pt];
\draw[thick,fill=white] (2.5*\ee+0.5*\sqr*\ee,0.5*\sqr*\ee) circle [radius=2.5pt];
\draw[thick,fill=white] (1.5*\ee+0.5*\sqr*\ee,0.5*\sqr*\ee) circle [radius=2.5pt];
\end{tikzpicture}
\caption{The lines in $\calL_e$ (blue) together with a schematic of the subsets $Z_0$ (yellow) and $Z$ (green) from Lemma~\ref{lem:edge cut set}.  Vertices of $T$ are shown in black and midpoints of edge are shown in white.}\label{fig:edge cut}
\end{figure}
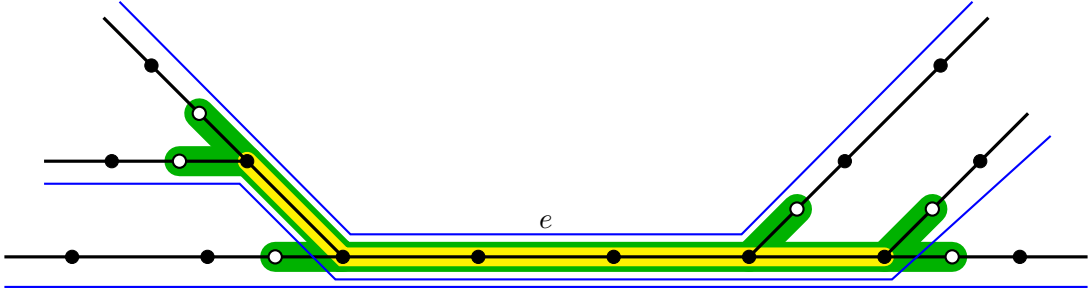

\medskip
\noindent {\it Claim 1.} \ There is a bijection between the connected components of $\Wh_T(\calL,X)$ and $\Wh_T(\calL,Z) - \calL_e$.  

\begin{proof}[Proof of Claim 1]
For each geodesic ray $\rho_\ell^\pm$, we let $R_\ell^\pm = \rho_\ell^\pm([1/2,\infty)) \subset T$ and we let $Y_\ell^\pm$ be the direction at $\rho_\ell^\pm(1/2)$ that meets $Z$.  Observe that $\Wh_T(\calL,R_\ell^+) - \{[Y_\ell^+]\}$ is connected.  Indeed, as $q(\ell)$ is not a cut point, by Lemma~\ref{lem:hull}, we have that $\Wh_T(\calL,\ell_T)$ is connected.  Now $\Wh_T(\calL,R_\ell^+,\calL) - \{[Y_\ell^+]\}$ is an infinite end of $\Wh_T(\calL,\ell_T)$ and as the stabilizer of $\ell_T$ acts cocompactly on $\Wh_T(\calL,\ell_T)$ we see that $\Wh_T(\calL,R_\ell^+) - \{[Y_\ell^+]\}$ must be connected as well.  The same holds for $\Wh_T(\calL,R_\ell^-) - \{[Y_\ell^-]\}$.

Now let $\oY_\ell^\pm$ be the direction at $\rho_\ell^\pm(1/2)$ that does not meet $Z$.  Then $\Wh_T(\calL,X)$ is obtained from $\Wh_T(\calL,Z) - \calL_e$ by removing the vertices $[\oY_\ell^\pm]$ and splicing this graph with loose ends together with the graphs with loose ends $\Wh_T(\calL,R_\ell^\pm) - \{[Y_\ell^\pm]\}$.  As the latter graphs are connected, there is a clear bijection between the connected components of $\Wh_T(\calL,X)$ and $\Wh_T(\calL,Z) - \calL_e$.  
\end{proof}

Next, we will show how we can trim the set $Z$ down to the set which is essentially the edge $e$.  Specifically, for each of the lines $\ell \in \calL_e$, let $e_\ell^\pm$ be the two edges contained in $\ell$ that share a single vertex with $e$ and so that $e_\ell^\pm$ separates $e$ from $\ell_T^\pm$.  We note the possibility that some of the edges $e_\ell^\pm$ might be the same.  Subdivide each $e_\ell^\pm$ into $e_{\ell,0}^\pm e_{\ell,1}^\pm$ where $e_{\ell,0}^\pm$ shares a vertex with $e$.  Finally, we set:
\begin{equation*}
\he = e \cup \bigcup_{\ell \in \calL_e} e_{\ell,0}^+ \cup \bigcup_{\ell \in \calL_e} e_{\ell,0}^-.
\end{equation*}  
As for $Z \subset T$, each line $\ell \in \calL_e$ determines an edge in $\Wh_T(\calL,\he)$ that is incident to two trivial vertices.  

The following claim, and its proof, are similar to the discussion of the \emph{pruned core}~\cite[Section~4.6]{ar:CM11}.

\medskip
\noindent {\it Claim 2.} \ There is a bijection between the connected components of $\Wh_T(\calL,Z) - \calL_e$ and $\Wh_T(\calL,\he) - \calL_e$.

\begin{proof}[Proof of Claim 2]
Let $Z' \subset Z$ be a finite star about some vertex $v \in Z$ such that $Z - Z'$ is connected and contains $e$.  Set $\hZ = Z - Z'$.  We will show that there is a bijection between the connected components of $\Wh_T(\calL,Z) - \calL_e$ and $\Wh_T(\calL,\hZ) - \calL_e$.  The claim then follows by repeating this process until we have trimmed the set $Z$ down to $\he$.

We consider the Whitehead graph $\Wh_T(\calL,Z')$.  Let $Y$ be the direction at $Z'$ that contains $e$ and let $Y'$ be the direction at $\hZ$ that contains $Z'$.  We observe that $\Wh_T(\calL,Z) - \calL_e$ is obtained from $\Wh_T(\calL,\hZ) - \calL_e$ by removing the vertex $[Y']$ and splicing this to the graph with loose ends obtained from $\Wh_T(Z',\calL) - \calL_e$ by removing $[Y]$.  

The Whitehead graph $\Wh_T(\calL,Z')$ is connected and $[Y]$ is not a cut vertex of this graph.  Indeed, if $\Stab_T(v) = \{1\}$, then this follows as $\Wh_T(\calL,Z') $ is isomorphic to $\Wh_T(\calL,v)$ and so $\Wh_T(\calL,Z')$ does not have an admissible cut. Else, if $\#\abs{\Stab_T(v)} = \infty$ this follows by Lemma~\ref{lem:whitehead stars}.

As each of the edges in $\Wh_T(\calL,Z')$ corresponding to one of the lines in $\calL_e$ is incident to $[Y]$, we have that the graph with loose ends $\Wh_T(\calL,Z') - \calL_e$ obtained by removing $[Y]$ is also connected.  Hence as in Claim 1, we are removing a vertex and splicing on a connected graph with loose ends and there is a clear bijection between the connected components.
\end{proof}

The lemma now follows as $\Wh_T(\calL,\he) - \calL_e$ contains exactly two components (this follows from another application of Lemma~\ref{lem:whitehead stars}) and adding any of the edges corresponding to one of the lines $\ell \in \calL_e$ connects these two components.
\end{proof}


\section{Decomposing $\calD(\calL_g)$ along cut points}\label{sec:cut points}

Now that we have a model for the decomposition space, we want to work with it to identify $\calZ$--splittings of our free product $(G,\calA)$ where a given $\calZ$--simple element is elliptic.  To this end, it will become necessary to be able to promote the existence of a loxodromic cut pair in the decomposition space to the existence of an inseparable loxodromic cut pair, as the latter actually corresponds to a $\calZ$--splitting of the free product.  We start with a proposition (Proposition~\ref{prop:trichotomy}) that allows for such a promotion when the decomposition space is not homeomorphic to the circle nor contains cut points.  We deal with the case that the decomposition space is homeomorphic to a circle in Section~\ref{sec:quadratic elements}.  The remainder of this section is dealing with the case of cut points in the decomposition space.  We will apply recent work of Dasgupta--Hruska~\cite{ar:DH} that constructs a tree that models the structure of cut points in the boundary of a relatively hyperbolic group.  The key features of this tree appear in Lemma~\ref{lem:B}.  We will see that we can apply Proposition~\ref{prop:trichotomy} to a certain vertex stabilizer in this tree.  The ultimate statement regarding the existence of inseparable loxodromic cut pairs appears in Section~\ref{sec:quadratic elements} as Corollary~\ref{co:inseparable} as we must further analyze the case that the aforementioned vertex stabilizer also produces a decomposition space that is homeomorphic to a circle.

Let $(G,\calA)$ be a fixed non-sporadic torsion-free free product.  

\begin{proposition}\label{prop:trichotomy}
Let $\calL$ be a periodic line collection and suppose that $\calD(\calL)$ is connected and contains a loxodromic cut pair.  Then one of the following holds:  
\begin{enumerate}
\item\label{item:tri-circle} $\calD(\calL)$ is homeomorphic to a circle.

\item\label{item:tri-point} $\calD(\calL)$ contains a cut point.

\item\label{item:tri-pair} $\calD(\calL)$ contains an inseparable loxodromic cut pair.
\end{enumerate}
\end{proposition}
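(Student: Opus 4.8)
The plan is to set $M := \calD(\calL)$, which by Lemma~\ref{lem:boundary} is the Bowditch boundary of the relatively hyperbolic pair $(G, \calA \cup \calN_\calL)$ and hence, by the Dasgupta--Hruska theorem recalled in Section~\ref{subsec:cut sets}, a Peano continuum since it is connected. I would dispose of case~\eqref{item:tri-point} immediately: if $M$ has a cut point we are done, so assume from now on that $M$ has no cut point. Let $\Pi = \{x_0, x_1\} = \{q(a^\infty), q(a^{-\infty})\}$ be the given loxodromic cut pair; as recalled above it is exact, so it has a well-defined valence $\val(\Pi) = n \geq 2$, and $M - \Pi$ has exactly $n$ components $C_1, \ldots, C_n$, each with two ends limiting to $x_0$ and to $x_1$. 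The argument then splits according to whether $\Pi$ is multivalent or bivalent.

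First I would treat the multivalent case and prove the more general fact that \emph{every multivalent exact cut pair is inseparable}. Suppose an exact cut pair $\Pi' = \{y_0, y_1\}$ separates $\Pi$. Then $\Pi' \cap \Pi = \emptyset$ (a point shared with $\Pi$ would be deleted in passing to $M - \Pi'$, so $x_0$ and $x_1$ could not lie in different components there), and by the symmetry of the separation relation $\Pi$ also separates $\Pi'$, forcing $y_0$ and $y_1$ into two distinct components, say $y_0 \in C_1$ and $y_1 \in C_2$. If $n \geq 3$ there is a third component $C_3$, and $\overline{C_3} = C_3 \cup \{x_0, x_1\}$ is a connected subset of $M$ disjoint from $\Pi'$ containing both $x_0$ and $x_1$; this contradicts that $\Pi'$ separates $\Pi$. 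Hence a multivalent exact cut pair is inseparable. In particular, if the given $\Pi$ is multivalent it is already an inseparable loxodromic cut pair and we are in case~\eqref{item:tri-pair}.

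It remains to analyze the bivalent case, which is the heart of the proof. Here $M = \overline{C_1} \cup \overline{C_2}$ with $\overline{C_1} \cap \overline{C_2} = \{x_0, x_1\}$. If both $\overline{C_i}$ are arcs with endpoints $x_0, x_1$, then $M$ is the union of two arcs glued along their common endpoints, i.e.\ a circle, and we are in case~\eqref{item:tri-circle}. Otherwise some piece, say $\overline{C_1}$, is a nondegenerate Peano continuum that is not an arc and therefore contains a branch point (a triod), i.e.\ a local cut point of valence at least three. To convert this into an inseparable \emph{loxodromic} cut pair I would pass to the combinatorial model: after replacing the Grushko tree by a Whitehead-reduced one (which exists by Proposition~\ref{prop:decomposition connected} since $M$ is connected), Lemma~\ref{lem:hull} identifies the components of $M - \Pi$ with the components of the locally finite graph $\Wh_T(\calL, T_a)$, on which the infinite cyclic group $\I{a}$ acts cocompactly. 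The piece $\overline{C_1}$ fails to be an arc precisely when the corresponding component of $\Wh_T(\calL, T_a)$ is not a bi-infinite line; cocompactness then produces, from a vertex of degree at least three or from a nontrivial cycle, the endpoints of a translate of an axis that form a multivalent exact cut pair. By the previous paragraph this pair is inseparable, and being built from axis endpoints it is loxodromic, giving case~\eqref{item:tri-pair}.

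The main obstacle is exactly this last step: extracting from a non-arc complementary piece a genuinely multivalent cut pair that is simultaneously loxodromic, while ruling out an infinite regress of ever-finer bivalent cut pairs that neither terminate nor fill out a circle. I expect to control the regress through the finiteness of the cut structure modulo $\I{a}$ furnished by the cocompact action on the locally finite graph $\Wh_T(\calL, T_a)$, and to guarantee loxodromicity by using that peripheral points of $(G, \calA \cup \calN_\calL)$ are never cut points (Lemma~\ref{lem:parabolic cut}), so that the branching is carried by translates of loxodromic axes rather than by parabolic points.
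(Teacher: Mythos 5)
Your reduction to the bivalent case is sound and matches the paper: you correctly dispose of case~\eqref{item:tri-point} by assumption, and your direct argument that a multivalent exact cut pair is inseparable (a third component $C_3$ with $\overline{C_3} = C_3 \cup \{x_0,x_1\}$ connected and disjoint from any separating pair) is a correct proof of the fact the paper simply cites to Cashen and Hallmark--Hruska. The gap is in the bivalent case, which is the actual content of the proposition, and it occurs at several concrete points. First, ``$\overline{C_1}$ is not an arc, therefore it contains a branch point, i.e.\ a local cut point of valence at least three'' is false: a Peano continuum can contain triods everywhere yet have no local cut points at all (a disk, or a Sierpi\'nski carpet, glued to the rest of $M$ along $\{x_0,x_1\}$), so failure to be an arc produces no cut structure whatsoever inside $C_1$. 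Second, the claimed dictionary ``$\overline{C_1}$ is an arc if and only if the corresponding component of $\Wh_T(\calL,T_a)$ is a bi-infinite line'' is unjustified and not true: Lemma~\ref{lem:hull} gives a bijection between components of $M - \{q(a^{\infty}),q(a^{-\infty})\}$ and components of $\Wh_T(\calL,T_a)$, but it says nothing about the internal topology of a component; vertices of the Whitehead graph carry whole boundary sets of directions, and a degree-three vertex or a cycle is completely generic and does not yield any exact cut pair, multivalent or otherwise. So the mechanism you invoke to ``produce the endpoints of a translate of an axis'' does not exist as stated.

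More structurally, the infinite regress you flag cannot be controlled by cocompactness of $\I{a}$ on $\Wh_T(\calL,T_a)$: that finiteness only governs cut pairs separating the single pair $\{q(a^{\infty}),q(a^{-\infty})\}$, whereas the genuine obstruction is a \emph{necklace} --- a maximal family of mutually separating bivalent exact cut pairs, whose pairs are stabilized by elements unrelated to $a$. The circle shows that no purely local or purely topological argument can close this: on a circle \emph{every} pair of points is a separable bivalent exact cut pair and there are no inseparable pairs at all, so the hypothesis that $M$ is not a circle must enter globally. This is exactly what the paper's proof imports from Hallmark--Hruska~\cite[Proposition~4.6]{ar:HH}: in a connected boundary without cut points that is not a circle, a bivalent exact cut pair is either an inseparable loxodromic cut pair or belongs to a necklace, and every necklace contains a \emph{jump} (cf.~\cite[Lemma~4.19]{ar:Haulmark19}), which is itself an inseparable loxodromic cut pair. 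That input (or Cashen's free-group argument from~\cite{ar:Cashen16}, which the paper notes carries over mutatis mutandis) is genuinely group-theoretic, and your proposal would need to prove a version of it rather than defer it to a hoped-for finiteness argument.
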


\begin{proof}
Assume that $\calL$ is a periodic line collection such that $\calD(\calL)$ is connected and contains a loxodromic cut pair.  Further suppose that $\calD(\calL)$ is not homeomorphic to the circle and also that $\calD(\calL)$ does not contain a cut point.  

Let $\{q(a^\infty),q(a^{-\infty})\}$ be a loxodromic cut pair in $\calD(\calL)$.  It is well-known that multivalent exact cut pairs are inseparable, see~\cite[Lemma~2.3]{ar:Cashen16} or \cite[Lemma~2.6]{ar:HH} for instance.  Hence, we can assume that $\{q(a^\infty),q(a^{-\infty})\}$ is bivalent.  In the setting of boundaries of relatively hyperbolic groups, Haulmark--Hruska~\cite[Proposition~4.6]{ar:HH} characterize bivalent local cut points when the boundary is connected without cut points and not homeomorphic to the circle.  Their result applies to $\calD(\calL)$ by Lemma~\ref{lem:boundary}.  Their characterization implies that a bivalent exact cut pair is either an inseparable loxodromic cut pair or it is part of a \emph{necklace}.  The definition of a necklace is not necessary, we only require the fact that each necklace contains a \emph{jump}, which itself is an inseparable loxodromic cut pair (cf.~\cite[Lemma~4.19]{ar:Haulmark19}).  
\end{proof}

In the case that $\calA = \emptyset$, the above proposition appears as Corollary~4.10 in work of Cashen~\cite{ar:Cashen16}.  Again, the proof given by Cashen applies mutatis mutandis to the free product setting.  We chose the above proof for brevity.

Let $g \in G$ be a non-peripheral element that is not simple.  By Corollary~\ref{co:not simple} we have that $\calD(\calL_g)$ is connected.  Further, if we asume that $g$ is $\calZ$--simple, then $\calD(\calL_g)$ contains a loxodromic cut pair by Lemma~\ref{lem:cut set} and so Proposition~\ref{prop:trichotomy} applies.  As mentioned above, in the next section we will analyze the case that $\calD(\calL_g)$ is homeomorphic to a circle, or more generally an inverse limit of a tree of circles.  In the remainder of this section we will see how we can decompose the decomposition space along cut points.   

Dasgupta--Hruska have described a tree $\calT_{\rm cut}$ that models the structure of cut points in $\calD(\calL_g)$~\cite[Theorem~1.2]{ar:DH}.  The tree has a natural bipartite structure on $\calT_{\rm cut}$ where each vertex is either:
\begin{enumerate}
\item a nontrivial maximal subcontinuum of $\calD(\calL_g)$ that is not separated by a cut point in $\calD(\calL_g)$, or
\item a cut point in $\calD(\calL_g)$.
\end{enumerate}
The set of vertices of the first type is denoted $V_0(\calT_{\rm cut})$; the set of vertices of the second type is denoted $V_1(\calT_{\rm cut})$.  A vertex $B \in V_0(\calT_{\rm cut})$ is adjacent to a vertex $x \in V_1(\calT_{\rm cut})$ if $x \in B$.  The group $G$ acts on $\calT_{\rm cut}$ in the obvious way.  If $\calD(\calL_g)$ does not have any cut points, then $\calT_{\rm cut}$ consists of a single vertex corresponding to $\calD(\calL_g)$.       
  
\begin{lemma}\label{lem:cut point tree}
Let $g \in G$ be a non-peripheral element that is not simple.  Then the following hold.
\begin{enumerate}
\item\label{item:unique vertex} There is a unique vertex in $\calT_{\rm cut}$ that is fixed by $g$.  Moreover, this vertex belongs to $V_0(\calT_{\rm cut})$.
\item\label{item:peripheral edge stab} Any edge stabilizer in $\calT_{\rm cut}$ is peripheral.
\end{enumerate}
\end{lemma}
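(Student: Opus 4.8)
The plan is to study the action of $g$ on $\calT_{\rm cut}$ entirely through the single point $x_g := q(g^\infty) = q(g^{-\infty}) \in \calD(\calL_g)$, the image of the two endpoints of the axis $T_g$; note $\calD(\calL_g)$ is connected by Corollary~\ref{co:not simple} since $g$ is not simple, so cut points make sense. First I would identify which cut points $g$ can fix. By Lemma~\ref{lem:parabolic cut} every cut point is either a peripheral cut point $q(v)$ with $v \in V_\infty(G,\calA)$ or of the form $q(\ell)$ with $\ell \in \calL_g$. Since $g$ is non-peripheral it fixes no $v \in V_\infty(G,\calA)$, hence no peripheral cut point. If $g$ fixes $q(\ell)$ with $\ell = \{ag^\infty, ag^{-\infty}\}$, then $g$ preserves $\{ag^\infty,ag^{-\infty}\}$, which forces $a^{-1}ga \in \Stab(T_g) = N_g$; as $N_g$ is malnormal (all of its nontrivial elements share the axis $T_g$) this gives $a \in N_g$, whence $q(\ell) = x_g$. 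Thus the only cut point $g$ can possibly fix is $x_g$, and both parts of the lemma reduce to the single assertion $(\star)$: \emph{$x_g$ is not a cut point of $\calD(\calL_g)$}.

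For $(\star)$ I would argue by contradiction; this is the step I expect to be the main obstacle. One clean reformulation is available through Lemma~\ref{lem:hull}: taking $P = \{x_g\}$, so that $q^{-1}(P) = \{g^\infty, g^{-\infty}\}$ has convex hull exactly $T_g$, the components of $\calD(\calL_g) - \{x_g\}$ are in bijection with the components of $\Wh_T(\calL_g, T_g)$ for any $T$ that is Whitehead reduced for $\calL_g$ (such $T$ exists by Proposition~\ref{prop:decomposition connected}); hence $(\star)$ is equivalent to connectivity of $\Wh_T(\calL_g, T_g)$. The route I would actually take, however, is to exploit a hypothetical cut point directly: if $x_g$ were a cut point it would be a vertex of $V_1(\calT_{\rm cut})$ with stabilizer $N_g$, and collapsing all edge orbits of $\calT_{\rm cut}$ but that of a single edge $\epsilon$ incident to $x_g$ would yield a nontrivial one-edge $(G,\calA)$--splitting $S$ (a $(G,\calA)$--tree because peripheral subgroups are elliptic, one of the structural features recorded in Lemma~\ref{lem:B}). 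Its edge group is $E = N_g \cap \Stab(B) \subseteq N_g \cong \ZZ$. If $E$ is trivial then $S$ is a free splitting in which $g$ is elliptic, so $g$ is simple; if $E = \langle (g')^k\rangle$ is nontrivial, where $g'$ is the indivisible root with $N_g = \langle g'\rangle$, then $S$ is a $\calZ$--splitting in which $(g')^k$ fixes an edge, so $(g')^k$, hence $g$, is simple by Lemma~\ref{lem:edge stabs in Z-splittings}. Either case contradicts the hypothesis, proving $(\star)$.

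Granting $(\star)$, I would finish Part~\eqref{item:unique vertex} as follows. Since $x_g$ is not a cut point it lies in a \emph{unique} block $B_g \in V_0(\calT_{\rm cut})$ (two distinct maximal non-separated subcontinua meet in at most a cut point, so membership in two blocks would make $x_g$ a cut point). As $gx_g = x_g$, uniqueness forces $gB_g = B_g$, so $g$ is elliptic and fixes $B_g \in V_0(\calT_{\rm cut})$. For uniqueness of the fixed vertex, suppose $g$ also fixed a vertex $u \neq B_g$; then $g$ fixes the geodesic $[B_g, u]$ pointwise, in particular it fixes the $V_1$--vertex adjacent to $B_g$ on this geodesic. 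That vertex is a cut point fixed by $g$, hence equals $x_g$ by the first paragraph, contradicting $(\star)$. Therefore $B_g$ is the unique $g$--fixed vertex and it lies in $V_0(\calT_{\rm cut})$.

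Finally, for Part~\eqref{item:peripheral edge stab} I would combine $(\star)$ with Lemma~\ref{lem:parabolic cut} once more. A translate $a\cdot x_g$ is a cut point if and only if $x_g$ is, so by $(\star)$ no point of the form $q(\ell)$ with $\ell \in \calL_g$ is a cut point; hence \emph{every} cut point of $\calD(\calL_g)$ is a peripheral cut point $q(v)$ with $v \in V_\infty(G,\calA)$. The stabilizer of such a vertex $q(v) \in V_1(\calT_{\rm cut})$ is exactly $\Stab_T(v)$, because $q$ is injective on $V_\infty(G,\calA)$; this group is conjugate to one of the $A_i$ and so is peripheral. Since every edge of $\calT_{\rm cut}$ joins a block to a cut point $q(v)$, its stabilizer is contained in $\Stab(q(v)) = \Stab_T(v)$ and is therefore peripheral, which is Part~\eqref{item:peripheral edge stab}.
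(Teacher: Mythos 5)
Your proposal is correct and takes essentially the same route as the paper: both arguments reduce everything to showing that $q(g^\infty)$ is not a cut point by collapsing $\calT_{\rm cut}$ to a one-edge splitting whose edge group lies in the cyclic group $N_g$ and invoking Lemma~\ref{lem:edge stabs in Z-splittings} to contradict non-simplicity, and both then dispatch the remaining fixed vertices and the edge stabilizers via Lemma~\ref{lem:parabolic cut} (your malnormality computation just makes explicit what the paper leaves implicit when it says $g$ can only fix the cut point $q(g^\infty)$). The one blemish is your appeal to Lemma~\ref{lem:B} for the fact that the collapse of $\calT_{\rm cut}$ is a $(G,\calA)$--tree: that lemma appears later and its proof relies on the present one, so the ellipticity of peripheral subgroups in $\calT_{\rm cut}$ should instead be drawn directly from the Dasgupta--Hruska construction, as the paper does tacitly.
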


\begin{proof}
Let $B \subseteq \calD(\calL_g)$ be a maximal subcontinuum of $\calD(\calL_g)$ that is not separated by a cut point in $\calD(\calL_g)$ and that contains $q(g^\infty)$.    

If $g$ does not fix $B$ as a vertex in $V_0(\calT_{\rm cut})$, then $q(g^\infty)$ is a cut point and hence a vertex in $V_1(\calT_{\rm cut})$.  Let $S$ be the $(G,\calA)$ tree obtained by collapsing all edges of $\calT_{\rm cut}$ other than those in the orbit of the edge connecting $q(g^\infty)$ to $B$.  The result is a $\calZ$--splitting of $(G,\calA)$ where the stabilizer of some edge in $S$ is a subgroup of $\I{g}$.  As we assumed that $g$ is not simple, this is a contradiction.  Indeed, if this edge stabilizer is trivial, then $S$ witnesses $g$ as simple.  Else, if this edge stabilizer is nontrivial, then Lemma~\ref{lem:edge stabs in Z-splittings} implies that some power of $g$, and hence also $g$, is simple.  Therefore $g$ fixes $B$. 

If $g$ fixes a vertex other than $B$, then $g$ fixes a cut point in $\calD(\calL_g)$, as is seen using the bipartite structure of the tree $\calT_{\rm cut}$.  As $g$ is non-peripheral, by Lemma~\ref{lem:parabolic cut}, this can only happen if $q(g^\infty)$ is a cut point.  As above, this is a contradiction.  This shows~\eqref{item:unique vertex}.

By the bipartite structure of $\calT_{\rm cut}$, any edge stabilizer must fix a cut point in $\calD(\calL_g)$.  By Lemma~\ref{lem:parabolic cut}, any cut point is either a peripheral cut point or equal to $q(ag^\infty)$ for some $a \in G$.  Once again, the above shows that the latter possibility cannot occur, hence any cut point is a peripheral cut point and therefore stabilized by a peripheral subgroup.  Hence any edge stabilizer is also a peripheral subgroup.  This shows~\eqref{item:peripheral edge stab}.
\end{proof}

For a vertex $B \in V_0(\calT_{\rm cut})$ we let $G_{B} = \Stab_{\calT_{\rm cut}}(B)$.  By $\calN_g^{B}$ we denote the $G_{B}$--conjugacy class of $N_{g'}$ where $g' \in B$ is conjugate to $g$.  If no such conjugate exists we set $\calN_g^{B} = \emptyset$.  By Lemma~\ref{lem:cut point tree}\eqref{item:unique vertex} this is well-defined.  

In order to state the following lemma, which will allow us to promote the existence of a loxodromic cut pair to the existence of an inseparable loxodromic cut pair even in the setting of cut points with one additional hypothesis, we need the following notation introduced by Guirardel--Levitt~\cite{ar:GL17}.  Again, consider a vertex $B \in V_0(\calT_{\rm cut})$.  By $\Inc_{B}$ we denote the set of $G_{B}$--conjugacy classes of stabilizers of the edges in $\calT_{\rm cut}$ incident to $B$.  By $\calA |_{G_{B}}$ we denote the set of $G_{B}$--conjugacy classes of subgroups $hA_ih^{-1}$ for $h \in G$ and $i = 1,\ldots,k$ that fix $B$ but not any incident edge.  Lastly, we set $\calP_{B} = \Inc_{B} \cup \, \calA |_{G_{B}}$.  By Lemma~\ref{lem:cut point tree}\eqref{item:peripheral edge stab}, we have that any subgroup of $G_B$ whose conjugacy class is in $\calP_B$ is necessarily peripheral. 

\begin{lemma}\label{lem:B}
Let $g \in G$ be a non-peripheral element that is $\calZ$--simple but not simple and let $B \in V_0(\calT_{\rm cut})$ be the unique vertex fixed by $g$.  Then the following hold.
\begin{enumerate}

\item\label{item:B non-peripheral} The pair $(G_B,\calP_B)$ is a torsion-free free product and $g$ is non-peripheral with respect to $\calP_B$.

\item\label{item:B simple} The element $g$ is $\calZ$--simple and not simple with respect to $\calP_B$.

\item\label{item:B single} The vertex set $V_0(\calT_{\rm cut})$ has a single orbit.

\item\label{item:B boundary} For the periodic line collection $\calL_g^B = \{ \{ag^\infty,ag^{-\infty}\} \mid a \in G_B \}$ on the free product $(G_B,\calP_B)$, the decomposition space $\calD_{(G_B,\calP_B)}(\calL_g^B)$ is homeomorphic to the boundary of the relatively hyperbolic group $(G_B,\calP_B \cup \calN_g^B)$. 

\item\label{item:B dichotomy} Either the decomposition space $\calD_{(G_B,\calP_B)}(\calL_g)$ is homeomorphic to a circle, or it contains an inseparable loxodromic cut pair.

\item\label{item:B cut pairs} Any inseparable loxodromic cut pair in $\calD_{(G_B,\calP_B)}(\calL_g)$ is an inseparable loxodromic cut pair in $\calD(\calL_g)$.

\end{enumerate}

\end{lemma}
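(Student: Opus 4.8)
The plan is to establish the six items in the order listed, treating $\calT_{\rm cut}$ throughout as a splitting of $(G,\calA)$ over peripheral subgroups, which is exactly the content of Lemma~\ref{lem:cut point tree}\eqref{item:peripheral edge stab}. For \eqref{item:B non-peripheral} I would appeal to Bass--Serre theory together with the Guirardel--Levitt framework~\cite{ar:GL17} that motivates the definition $\calP_B = \Inc_B \cup \calA|_{G_B}$: a vertex group of a splitting of a free product over peripheral subgroups is again a free product, and $\calP_B$ is precisely its induced peripheral structure. Torsion-freeness is inherited from $G$, and non-peripherality of $g$ is immediate, since every subgroup whose class lies in $\calP_B$ is peripheral in $(G,\calA)$, so a $G_B$--conjugacy of $g$ into one of them would make $g$ peripheral in $(G,\calA)$. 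I would also record here that $(G_B,\calP_B)$ is non-sporadic, which is forced by the presence of a non-peripheral, non-simple element.

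For \eqref{item:B simple} there are two assertions. For \enquote{not simple} I would argue by contradiction: a free splitting of $(G_B,\calP_B)$ in which $g$ is elliptic refines $\calT_{\rm cut}$ at $B$ and then collapses to a free splitting of $(G,\calA)$ in which $g$ is still elliptic, contradicting non-simplicity of $g$ in $(G,\calA)$. For \enquote{$\calZ$--simple} I would restrict the $\calZ$--splitting $S$ of $(G,\calA)$ in which $g$ is elliptic to the minimal $G_B$--invariant subtree: the members of $\calP_B$ are peripheral, hence elliptic in $S$, while the edge stabilizers of $S$ are cyclic and non-peripheral, so this yields a $\calZ$--splitting of $(G_B,\calP_B)$ with $g$ elliptic, once one checks the action is nontrivial by locating the generator $a$ of an edge stabilizer of $S$ (and thus the loxodromic cut pair of Lemma~\ref{lem:cut set}) inside the piece $B$.

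Claim \eqref{item:B single} I would obtain as follows: the endpoints $\{hg^\infty \mid h \in G\}$ of the lines in $\calL_g$ are dense in $\bd(G,\calA)$, so every nondegenerate type-0 piece $B'$ contains a point $q(hg^\infty)$; the conjugate $hgh^{-1}$ fixes that point and hence fixes $B'$, so Lemma~\ref{lem:cut point tree}\eqref{item:unique vertex} applied to $hgh^{-1}$ forces $B' = hB$, giving a single orbit. Claim \eqref{item:B boundary} is then Lemma~\ref{lem:boundary} applied to the torsion-free free product $(G_B,\calP_B)$ of \eqref{item:B non-peripheral} with the line collection $\calL_g^B$, using $\calN_{\calL_g^B} = \calN_g^B$. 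With \eqref{item:B simple} in hand, \eqref{item:B dichotomy} follows from Proposition~\ref{prop:trichotomy}: the space $\calD_{(G_B,\calP_B)}(\calL_g)$ is connected by Corollary~\ref{co:not simple} and contains a loxodromic cut pair by Lemma~\ref{lem:cut set}, while the cut-point alternative is excluded because, under the identification of \eqref{item:B boundary}, this space is the piece $B$, which contains no cut points by its defining property as a vertex of $V_0(\calT_{\rm cut})$.

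The last item \eqref{item:B cut pairs} is where I expect the main obstacle. The identification of $\calD_{(G_B,\calP_B)}(\calL_g^B)$ with the subcontinuum $B \subseteq \calD(\calL_g)$ must be used to transport separation in both directions: a cut pair of $B$ stays a cut pair of $\calD(\calL_g)$ because the other pieces attach to $B$ only along the cut points that are the type-1 vertices of $\calT_{\rm cut}$, and hence lie to one side of the pair; conversely, any exact cut pair of $\calD(\calL_g)$ that separates two points of $B$ is loxodromic with generator conjugate into $G_B$, hence is already a separating cut pair inside $B$. Together these show that inseparability in $B$ forces inseparability in $\calD(\calL_g)$. The delicate point is that separation is a global connectivity condition, so the heart of the argument is to show that deleting a finite set cannot merge what the gluing-at-cut-points structure of $\calT_{\rm cut}$ keeps apart; this is the step most dependent on the fine structure of the Dasgupta--Hruska cut-point tree.
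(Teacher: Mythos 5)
Your items \eqref{item:B non-peripheral}, \eqref{item:B simple}, \eqref{item:B boundary}, and \eqref{item:B dichotomy} track the paper's proof closely (restriction to the minimal $G_B$--subtree of a Grushko tree and of the $\calZ$--splitting, the blow-up-and-collapse argument for non-simplicity, Lemma~\ref{lem:boundary}, and Proposition~\ref{prop:trichotomy} with cut points excluded because $B$ has none), with one gloss worth flagging: to know $(G_B,\calP_B)$ is a free product of the required finite form, the paper first invokes Dasgupta--Hruska's observation that $G_B$ is finitely generated, and it verifies by hand that a vertex stabilizer $G_B \cap hA_ih^{-1}$ that is proper in $hA_ih^{-1}$ fixes an edge incident to $B$, so that its class lies in $\Inc_B$; your appeal to general Bass--Serre theory skips both points. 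Also, your parenthetical that non-sporadicity of $(G_B,\calP_B)$ is ``forced by the presence of a non-peripheral, non-simple element'' is false as stated: in $(\ZZ,\emptyset)$, and in $(A_1 \ast A_2, \{[A_1],[A_2]\})$, every non-peripheral element is non-simple.

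The genuine gap is in \eqref{item:B single}. Your inference ``the orbit $\{q(hg^\infty)\}$ is dense, so every nondegenerate type-0 piece $B'$ contains such a point'' does not follow: the pieces of the cut-point tree are closed subcontinua that typically have \emph{empty interior} in $\calD(\calL_g)$ (in the tree-of-circles space of Example~\ref{ex:quadratic}, the attaching cut points are dense in each circle, so every circle piece has empty interior), and a dense set need only accumulate on $B'$, not meet it. The paper's argument is entirely different and genuinely uses the splitting structure: if $B'$ were not in the orbit of $B$, then no conjugate of $g$ fixes $B'$, so $\calN_g^{B'} = \emptyset$; by the blow-up argument of \eqref{item:B simple}, $(G_{B'},\calP_{B'})$ admits no free splitting, forcing it to be sporadic of type $(\{1\},\emptyset)$, $(G_{B'},\{[G_{B'}]\})$, or $(\ZZ,\emptyset)$; minimality of the $G$--action then forces $\Inc_{B'} = \emptyset$, so some edge incident to $B'$ has trivial stabilizer, producing a free splitting of $(G,\calA)$ in which $g$ is elliptic --- contradicting non-simplicity. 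Relatedly, for \eqref{item:B cut pairs} you correctly identify the hard step but leave it open; the paper closes it with a citation rather than an analysis of the gluing: after noting that $a \in G_B$ is non-peripheral in $(G,\calA)$ so neither point of the pair is a cut point of $\calD(\calL_g)$ (Lemma~\ref{lem:parabolic cut}), it invokes the fact that \emph{any} cut pair of $\calD(\calL_g)$ is contained in a single piece $B' \in V_0(\calT_{\rm cut})$~\cite[Lemma~2.5]{ar:HH}, so a separating pair would already separate inside $B$. Your proposed converse (``any exact cut pair separating two points of $B$ is loxodromic with generator conjugate into $G_B$'') is both unproven and stronger than needed; you should replace your sketch of this step and your density argument with these two ingredients.
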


\begin{proof}
First, Dasgupta--Hruska observe that $G_{B'}$ is finitely generated for any $B' \in V_0(\calT_{\rm cut})$~\cite[Proposition~2.13]{ar:DH}.  This implies that $(G_B,\calP_B)$ is a torsion-free free product.  Indeed, take any Grushko tree $T \in \calO$ and consider $T_B \subseteq T$ the minimal subtree for the action of $G_B$ on $T$.  Suppose that $H$ is a subgroup of $G_B$ whose $G_B$--conjugacy class is in $\calP_{B}$.  Then $H$ is peripheral and hence has a unique fixed point in $T_B$.  Moreover, any non-trivial vertex stabilizer in $T_B$ is of the form $H_B = G_B \cap hA_ih^{-1}$ for some $i = 1,\ldots,k$ and $h \in G$.  If $H_B$ is a proper subgroup of $hA_ih^{-1}$ then we claim that $H_B$ fixes an edge incident to $B$.  Indeed, let $v \in V_\infty(T) \subseteq \bd(G,\calA)$ be the vertex fixed by $hA_ih^{-1}$.  As $H_B$ is a proper subgroup of $hA_ih^{-1}$, then $hA_ih^{-1}$ does not fix $B$.  Thus $q(v)$ is a cut point of $\calD(\calL_g)$, and hence a vertex in $V_1(\calT_{\rm cut})$.  Then clearly the edge incident on $B$ and $q(v)$ is stabilized by $H_B$, completing our claim.  Therefore the conjugacy class of any vertex stabilizer in $T_B$ is in $\calP_B$.  As edge stabilizers in $T_B$ are trivial and $G_B$ is finitely generated, this shows that $(G_B,\calP_B)$ is a torsion-free free product.  

As $g$ is non-peripheral with respect to $(G,\calA)$ and all subgroups represented in $\calP_B$ are peripheral with respect to $\calA$, we see that $g$ is non-peripheral with respect to $\calP_B$ as well.   This shows~\eqref{item:B non-peripheral}.
  
Let $S$ be a $\calZ$--splitting of $(G,\calA)$ where $g$ is elliptic and let $S_B \subseteq S$ be the minimal subtree with respect to the action of $G_B$ on $S$.  As any subgroup of $G_B$ whose conjugacy class is in $\calP_B$ is necessary peripheral with respect to $\calA$, we see that $S_B$ is a $\calZ$--splitting of $(G_B,\calP_B)$ where $g$ is elliptic.  This shows that $g$ is $\calZ$--simple with respect to $\calP_B$.

Suppose that $g$ is simple with respect to $\calP_B$ and let $S$ be a free splitting of $(G_B,\calP_B)$ in which $g$ is elliptic.  Then we can blow-up $\calT_{\rm cut}$ by equivariantly replacing the vertex $B$ by a copy of the tree $S$.  Indeed, the stabilizers of edges incident on $B$ fix a unique vertex in $S$ and so this is well-defined.  The result is a $(G,\calA)$ tree which we denote $S'$.  Fix a single edge $e$ in $S \subseteq S'$.  Then the result of collapsing all edges of $S'$ outside of the orbit of $e$ is a free splitting of $(G,\calA)$ in which $g$ is elliptic.  This would imply that $g$ is simple with respect to $\calA$.  Therefore, as $g$ is not simple with respect to $\calA$, it is also not simple with respect to $\calP_B$.  This shows~\eqref{item:B simple}.     

Next, suppose that there is a vertex $B' \in V_0(\calT_{\rm cut})$ that is not in the orbit of $B$.  As no conjugate of $g$ fixes $B'$, we have that $\calN^{B'}_g = \emptyset$.  As in \eqref{item:B non-peripheral}, we have that $(G_{B'},\calP_{B'})$ is a torsion-free free product.  As in \eqref{item:B simple}, we have that $(G_{B'},\calP_{B'})$ does not have any free splitting. Therefore $(G_{B'},\calP_{B'})$ must be either $(\{1\},\emptyset)$, $(G_{B'},\{[G_{B'}]\})$, or $(\ZZ,\emptyset)$.  As the action of $G$ on $\calT_{\rm cut}$ is minimal, we must have that $\Inc_{B'} = \emptyset$ in all cases.  This implies that any edge incident to $B'$ has trivial stabilizer, which can be used to produce a free splitting of $(G,\calA)$, in which $g$ is elliptic.  This contradicts the assumption that $g$ is not simple with respect to $\calA$.  Hence there is only one orbit of vertices in $V_0(\calT_{\rm cut})$.  This shows~\eqref{item:B single}.   

Dasgupta--Hruska prove that $(G_B,\calP_B \cup \calN^B_g)$ is relatively hyperbolic with boundary homeomorphic to $B$~\cite[Theorem~1.2(2)]{ar:DH}.  Therefore, we have that \eqref{item:B boundary} follows immediately from Lemma~\ref{lem:boundary}.

For \eqref{item:B dichotomy}, we apply Proposition~\ref{prop:trichotomy} to the decomposition space $\calD_{(G_B,\calP_B)}(\calL_g^B)$ and note that $B$, which is homeomorphic to $\calD_{(G_B,\calP_B)}(\calL_g^B)$, is connected and does not have a cut point.

To prove \eqref{item:B cut pairs}, we start with $\{x_0,x_1\}$ that is an inseparable loxodromic cut pair in $\calD_{(G_B,\calP_B)}(\calL_g^B)$.  As this cut pair is loxodromic, we have that $x_0 = q_{(G_B,\calP_B)}(a^\infty)$ and $x_1 = q_{(G_B,\calP_B)}(a^\infty)$ for some non-peripheral element $a \in G_B$.  The element $a$ is non-peripheral with respect to $\calA$ as described above in~\eqref{item:B simple} and hence neither $x_0$ nor $x_1$ are cut points of $\calD(\calL_g)$.  It is now clear that $\{x_0,x_1\} \subset \calD(\calL_g)$ is a loxodromic cut pair.  Since any cut pair in $\calD(\calL_g)$ is contained in some $B' \in V_0(\calT_{\rm cut})$, (for instance, see \cite[Lemma~2.5]{ar:HH}) we have that $\{x_0,x_1\}$ is also an inseparable loxodromic cut pair in $\calD(\calL_g)$.      
\end{proof}


\section{Quadratic elements in free products}\label{sec:quadratic elements}

In this section we investigate the case where $\calD(\calL_g)$, or $\calD_{(G_B,\calP_B)}(\calL_g^B)$ in the case of cut points, is homeomorphic to a circle.  The main result in this section is Proposition~\ref{prop:quadratic} which characterizes such elements $g \in G$.  To begin, we recall the definition of a quadratic element in the setting of free products due to Guirardel--Horbez~\cite[Definition~2.14]{ar:GH22}.  In the setting of free groups, such an element corresponds to the boundary of a connected surface with a single boundary component.

Let $(G,\calA)$ be a fixed non-sporadic torsion-free free product.      

\begin{definition}\label{def:quad}
Let $g$ be a non-peripheral element of $(G,\calA)$ that is not simple.  We say $g$ is \emph{quadratic} if there is a (possibly trivial) $(G,\calA)$--tree $Q$ such that:
\begin{enumerate}

\item For some vertex $v$ in $Q$, there is a compact, connected 2--orbifold $\Sigma$ such that $\Stab_Q(v) \cong \pi_1(\Sigma)$.

\item Every edge of $Q$ has a translate that is incident to $v$.

\item Every incident edge group and every peripheral subgroup in $\Stab_Q(v)$ is conjugate into a boundary or a conical subgroup of $\pi_1(\Sigma)$.

\item The stabilizer of any edge is peripheral and the stabilizer of any vertex that is not in the orbit of $v$ is peripheral.

\item The element $g$ is the generator for a boundary component of $\Sigma$.

\end{enumerate}
\end{definition}

As remarked by Guirardel--Horbez~\cite{ar:GH22}, a subgroup of $\pi_1(\Sigma)$ corresponding to a boundary component of $\Sigma$ other than the one generated by $g$ is necessarily peripheral as $g$ is not simple.  As we are assuming that $G$ is torsion-free, we may assume that $\Sigma$ in the definition is a surface and hence there are no conical subgroups. 

\begin{example}\label{ex:quadratic}
Let $G$ be the free group of rank five with a basis $\{a,b,c,d,e\}$ and consider the free factor system  $\calA = \{[\I{a,b}],[\I{e}]\}$.  Fix a non-trivial element $x \in \I{a,b}$ and we consider the free product with amalgamation decomposition of $G$ as $\I{a,b} *_{\I{x}} \I{x,c,d,e}$.  There is an identification of $\I{x,c,d,e}$ with $\pi_1(\Sigma)$ where $\Sigma$ is a torus with three boundary components, one corresponding to $x$, another to $e$ and the last to $g = d^{-1}xcdc^{-1}e$.  The Bass-Serre tree $Q$ associated to the splitting shows that $g$ is quadratic.  The graph of spaces corresponding to this splitting is shown in Figure~\ref{fig:quadratic}.  

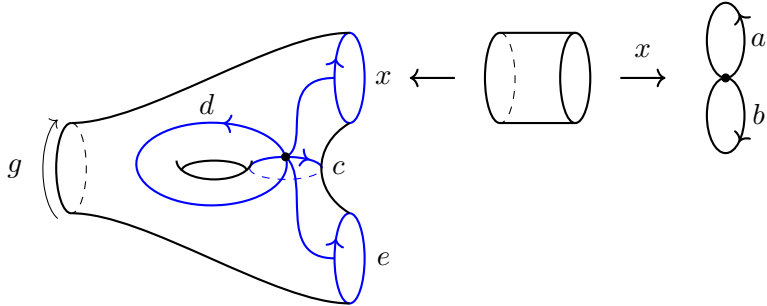
\begin{figure}[h]
\begin{tikzpicture}
\draw[thick,blue,->-=0.8] (0.67,0.0) arc (180:0:0.48cm and 0.15cm);
\draw[dashed,blue] (0.67,0.0) arc (180:360:0.48cm and 0.15cm);
\draw[thick,blue,->-=0.2] (1.15,0.15) arc (10:370: 1cm and 0.55cm);
\draw[thick,blue] (1.15,0.15) .. controls (1.5,-0.15) and (1,-1.2) .. (1.8,-1.2);
\draw[thick,blue] (1.15,0.15) .. controls (1.5,0.3) and (1,1.2) .. (1.8,1.2);
\draw[dashed] (-1.7,0.6) arc (90:-90:0.2cm and 0.6cm);
\draw[thick] (-1.7,0.6) arc (90:270:0.2cm and 0.6cm);
\draw[thick,blue,->-=0.1] (1.8,1.2) arc (180:-180:0.2cm and 0.6cm); 
\draw[thick,blue,->-=0.1] (1.8,-1.2) arc (180:-180:0.2cm and 0.6cm);
\draw[thick] (2,0.6) .. controls (1.5,0.3) and (1.5,-0.3) .. (2,-0.6);
\draw[thick] (-1.7,0.6) .. controls (-0.7, 0.6) and (1,1.8) .. (2,1.8);
\draw[thick] (-1.7,-0.6) .. controls (-0.7, -0.6) and (1,-1.8) .. (2,-1.8);
\draw[->] (-1.88,-0.67) arc (250:110:0.3cm and 0.7cm) node[pos=0.5,label=left:{$g$}] {};
\filldraw (1.15,0.15) circle [radius=1.5pt];
\node at (1.85,0) {$c$};
\node at (0.1,0.85) {$d$};
\node at (2.45,-1.2) {$e$};
\node at (2.45,1.2) {$x$};
\begin{scope}[xshift=0.2cm,yshift=0.1cm]
\draw[thick] (-0.43,-0.13) arc (180:0:0.43 cm and 0.13cm);	
\draw[thick] (0.5,0) arc (0:-180:0.5cm and 0.25cm);
\end{scope}
\begin{scope}[xshift=4.5cm,yshift=1.2cm]
\draw[thick] (-0.5,0.6) -- (0.5,0.6) (-0.5,-0.6) -- (0.5,-0.6);
\draw[dashed] (-0.5,0.6) arc (90:-90:0.2cm and 0.6cm);
\draw[thick] (-0.5,0.6) arc (90:270:0.2cm and 0.6cm);
\draw[thick] (0.5,0) circle [x radius=0.2cm, y radius=0.6cm];
\draw[thick,->] (1.1,0) -- (1.7,0) node[pos=0.5,label=above:{$x$}] {};
\draw[thick,->] (-1.1,0) -- (-1.7,0);
\end{scope}
\begin{scope}[xshift=7cm,yshift=1.2cm]
\draw[thick,->-=0.4] (0,0) arc (-90:270:0.25cm and 0.5cm) node[pos=0.2,label={[xshift=0.2cm,yshift=-0.2cm]$a$}] {};
\draw[thick,->-=0.4] (0,0) (0,0) arc (90:-270:0.25cm and 0.5cm) node[pos=0.2,label={[xshift=0.2cm,yshift=-0.55cm]$b$}] {};
\filldraw (0,0) circle [radius=1.5pt];
\end{scope}
\end{tikzpicture}
\caption{The quadratic element in $(G,\calA)$ from Example~\ref{ex:quadratic}.}\label{fig:quadratic}
\end{figure}

We will now describe the decomposition space $\calD(\calL_g)$.  Let $v$ be the vertex of $Q$ fixed by $\I{c,d,e,x}$, let $w$ be the vertex of $Q$ fixed by $\I{a,b}$ and let $\varepsilon$ be the edge incident to $v$ and $w$.  Set $G_v = \{c,d,e,x\}$ and $\calP = \{[\I{e}],[\I{x}],[\I{g}]\}$.  Then as a relatively hyperbolic group, the boundary of $(G_v,\calP)$ is homeomorphic to a circle, which we denote by $C_v$.  We replace the vertex $v$ in $Q$ by $C_v$ and for each $h \in G_v$, we attach the end of the edge $h\varepsilon$ incident to $v$ to the unique point fixed by $hxh^{-1}$ in $C_v$.  Repeat this step in an equivariant way at each vertex in the orbit of $v$ replacing the vertex $v'$ by the circle $C_{v'}$ and attaching the edges as before.  Now collapse every translate of $\varepsilon$ in the resulting space.  This new space, which we will denote $Q_C$, can be described as the union of the circles $C_{v'}$ over vertices $v'$ in the orbit of $v$ where $C_{v'} \cap C_{v''}$ consists of a single point if the simplicial distance between $v'$ and $v''$ in $Q$ is 2, and is empty otherwise.  The decomposition space $\calD(\calL_g)$ is the union of $Q_C \cup \bd_\infty Q$.  The topology on this space is similar to the observers topology.  Specifically, suppose that $(x_n)$ is a sequence of points in $Q_C$ and let $v_n$ be a corresponding sequence of vertices in $Q$ where $x_n \in C_{v_n}$ (note this sequence $(v_n)$ is not unique).  By passing to a subsequence of $(v_n)$, we can assume that $v_n \to v_\infty$ in $\hT_{\rm obs}$.  There are three cases that we describe now.
\begin{enumerate}
\item If $v_\infty \in \bd_\infty Q$, then $x_n \to v_\infty$. 
\item Suppose that $v_n$ is eventually constant.  Thus $x_n \in C_{v_\infty}$ for large enough $n$.  As $C_{v_\infty}$ is compact, we can pass to a subsequence to get $x_n \to x_\infty$ where $x_\infty \in C_{v_\infty}$.
\item Otherwise, for each $n$, there is a unique point $x'_n \in C_{v_\infty}$ such that $Q_C$ decomposes into two connected subsets $Q_0$ and $Q_1$ where $Q_0 \cap Q_1 = \{x'_n\}$, $x_n \in Q_0$ and $C_{v_\infty} \subset Q_1$.  Again, as $C_{v_\infty}$ is compact, we can pass to a subsequence to get $x'_n \to x_\infty$ where $x_\infty \in C_{v_\infty}$.  In this case $x_n \to x_\infty$ also.
\end{enumerate}

We note that $Q$ is the Dasgupta--Hruska cut point tree $\calT_{\rm cut}$ for $\calD(\calL_g)$.  The above discussion shows that $\calD(\calL_g)$ is the \emph{inverse limit of a tree of circles}~\cite[Section~4.1]{ar:DH}.
\end{example}

We will now show that quadratic elements in $G$ are precisely those for which the decomposition space $\calD(\calL_g)$, or $\calD_{(G_B,\calP_B)}(\calL_g^B)$ in the case of cut points as in Example~\ref{ex:quadratic}, is homeomorphic to a circle.  In the setting $\calA = \emptyset$, this was originally shown by Otal~\cite[Theorem~2]{ar:Otal92}, see also the work of Cashen--Macura~\cite[Theorem~6.1]{ar:CM11}.  First, we require one more definition due to Guirardel--Horbez~\cite[Lemma~2.19]{ar:GH22}.   

\begin{definition}\label{def:quad in T}
Let $g$ be a non-peripheral element in $G$ that is not simple and let $T \in \calO$ be a Grushko tree.  We say $g$ is \emph{quadratic in $T$} if its axis $T_g$ intersects every orbit of edges in $T$ exactly twice (regardless of orientation).
\end{definition}

Note, the definition of \emph{quadratic in T} by Guirardel--Horbez also requires that $g$ is not conjugate to it inverse.  As we are requiring that $G$ is torsion-free, this is not possible.  (Although, it can be shown using Whitehead graphs that Definition~\ref{def:quad in T} implies that $g$ is not conjugate to its inverse.)  For a definition of the notion of an inverse limit of a tree of compacta see the work of Dasgupta--Hruska and the references within~\cite[Definition~4.3]{ar:DH}. 

\begin{proposition}\label{prop:quadratic}
Let $g$ be a non-peripheral element of $(G,\calA)$ that is not simple.  The following are equivalent.
\begin{enumerate}
\item\label{item:quad-quad} The element $g$ is quadratic.

\item\label{item:quad-circle} The decomposition space $\calD(\calL_g)$ is homeomorphic to an inverse limit of a tree of circles (possibly trivial).  

\item\label{item:quad-WH circle} For any Grushko tree $T \in \calO$ that is Whitehead reduced for $g$ and for any vertex $v \in T$, the Whitehead graph $\Wh_T(\calL_g,v)$ is a circle.

\item\label{item:quad-quad in T} For any Grushko tree $T \in \calO$ that is Whitehead reduced for $g$, $g$ is quadratic in $T$.

\end{enumerate}
\end{proposition}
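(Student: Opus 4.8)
The plan is to prove the four conditions equivalent by establishing the combinatorial equivalence $(3)\Leftrightarrow(4)$ together with the cycle $(1)\Rightarrow(2)\Rightarrow(3)\Rightarrow(1)$. Throughout I would use the standing hypothesis that $g$ is non-peripheral and not simple, so that $\calD(\calL_g)$ is connected by Corollary~\ref{co:not simple} and the cut-point machinery of the previous sections applies.

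I would first dispatch $(3)\Leftrightarrow(4)$, which is essentially a counting statement. The vertices of $\Wh_T(\calL_g,v)$ are $\Stab_T(v)$--orbits of directions at $v$ and each edge records a translate of $T_g$ threading through $v$; as I observed, the valence of a vertex $[Y_e]$ at a direction determined by an edge $e$ equals the number of translates of $T_g$ crossing $e$ (modulo $\Stab_T(v)$). Since a circle is precisely a connected $2$--regular graph, the content is that $2$--regularity at every vertex is equivalent to the axis $T_g$ crossing each orbit of edges of $T$ exactly twice, i.e.\ to $g$ being quadratic in $T$ (Definition~\ref{def:quad in T}). At vertices with trivial stabilizer this is immediate and connectivity is forced by Whitehead--reducedness (a disjoint union of two cycles would be a type~i admissible cut). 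The delicate point, which I would treat via the graph $T_v(\calL_g)$ of Section~\ref{subsec:whitehead graph}, is ruling out a disjoint union of several cycles at a vertex $v\in V_\infty(T)$: here each component already carries nontrivial monodromy, so Whitehead--reducedness alone does not forbid disconnection, and I expect to need a counting/monodromy argument exploiting that $\calL_g$ is generated by the single element $g$ whose axis crosses each orbit exactly twice.

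For $(1)\Rightarrow(2)$ I would generalize the explicit computation in Example~\ref{ex:quadratic}: for the surface vertex group $\Stab_Q(v)\cong\pi_1(\Sigma)$, the Bowditch boundary of $\pi_1(\Sigma)$ relative to its boundary subgroups is a circle, and assembling these circles along the peripheral attaching points across the tree $Q$ (equivalently across the Dasgupta--Hruska cut point tree $\calT_{\rm cut}$) realizes $\calD(\calL_g)$ as an inverse limit of a tree of circles. For $(2)\Rightarrow(3)$ I would argue topologically: in an inverse limit of a tree of circles, every point lying on a circle piece is either a cut point or a bivalent local cut point, so deleting a suitable finite fiber disconnects $\calD(\calL_g)$ into at most two pieces; feeding this through Lemma~\ref{lem:hull} (with $X$ the convex hull of the relevant fibers) shows each $\Wh_T(\calL_g,v)$ is connected and $2$--regular, hence a circle. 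This route has the virtue that the global tree-of-circles structure, rather than Whitehead--reducedness alone, supplies the connectivity that was subtle in $(3)\Leftrightarrow(4)$.

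The hard part will be $(3)\Rightarrow(1)$, the constructive direction. Here I would use the cyclic order on directions supplied by each circle $\Wh_T(\calL_g,v)$ to equip $T/G$ with a ribbon (fatgraph) structure, whose regular neighborhood is a compact surface $\Sigma$; the boundary word read off by $T_g$ becomes the boundary component generated by $g$, and I would then package the data into a $(G,\calA)$--tree $Q$ and verify the five clauses of Definition~\ref{def:quad}. The genuine obstacle is handling vertices of $V_\infty(T)$: because the factors $A_i$ need not be cyclic, such a vertex cannot sit inside the surface and must instead be attached to $\Sigma$ along a boundary ($\ZZ$) subgroup, and I must check that the cyclic orders coming from the circle Whitehead graphs glue consistently across edges (via the splicing procedure of Section~\ref{subsec:splicing}) so that $g$ emerges as an \emph{honest} boundary component and all edge and non-surface vertex stabilizers come out peripheral. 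Verifying that the assembled object genuinely satisfies Definition~\ref{def:quad}, rather than merely looking surface-like at each vertex, is where I expect the real work to lie.
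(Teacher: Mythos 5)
Your implications $(1)\Rightarrow(2)$ and $(2)\Rightarrow(3)$ track the paper's proof closely: the paper also reduces $(1)\Rightarrow(2)$ to the case of trivial $Q$ (where $\calD(\calL_g)$ is a circle, via the Cantor-type quotient of the boundary of the surface group) and then assembles the general case along the Dasgupta--Hruska cut point tree using Lemma~\ref{lem:B} and \cite[Theorem~1.2(3)]{un:DH}; and its $(2)\Rightarrow(3)$ is exactly your cut-set argument, run through Lemmas~\ref{lem:parabolic cut} and \ref{lem:edge cut set} to conclude that every edge cut set in $\Wh_T(\calL_g,v)$ has exactly two edges (one point you elide: applying Lemma~\ref{lem:edge cut set} needs every cut point to be peripheral, which the paper checks by noting $q(g^\infty)$ cannot be a cut point since $g$ is not simple).

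The genuine gap is in how you close the loop. Your architecture makes two implications load-bearing --- $(4)\Rightarrow(3)$ and $(3)\Rightarrow(1)$ --- and for both you explicitly defer the essential difficulty rather than resolve it. For $(4)\Rightarrow(3)$, the connectivity problem you flag is real: at a vertex $v \in V_\infty(T)$, a disjoint union of several circles each with nontrivial monodromy is \emph{not} excluded by Whitehead reducedness (only components with trivial monodromy give admissible cuts), and no ``counting/monodromy argument exploiting that $\calL_g$ is generated by a single element'' is supplied; it is not clear one exists at this level of generality. For $(3)\Rightarrow(1)$, the ribbon-graph construction of $\Sigma$ and the verification of all five clauses of Definition~\ref{def:quad} --- in particular handling non-cyclic vertex stabilizers, where the monodromy of the circle $\Wh_T(\calL_g,v)$ may be a proper subgroup of $\Stab_T(v)$, forcing a prior splitting along peripheral cut points --- is precisely where you say ``the real work'' lies, and none of it is carried out. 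The paper avoids both hard directions entirely: it proves the cycle $(1)\Rightarrow(2)\Rightarrow(3)\Rightarrow(4)\Rightarrow(1)$, where $(3)\Rightarrow(4)$ is a one-line degree count (a circle is $2$--regular, so every edge orbit is crossed exactly twice by translates of $T_g$) and $(4)\Rightarrow(1)$ is an appeal to Guirardel--Horbez \cite[Lemma~2.19]{ar:GH22}, with torsion-freeness disposing of the hypothesis that $g$ is not conjugate to its inverse; $(4)\Rightarrow(3)$ and $(3)\Rightarrow(1)$ then come for free from the cycle. That citation is exactly the packaged form of the surface-building you propose to redo by hand, and not invoking it is the missing idea: as written, your proposal establishes the equivalence of $(1)$, $(2)$, $(3)$ but leaves $(4)$ receiving only the easy implication, so the proposition is not proved.
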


\begin{proof}
Let $g$ be a non-peripheral element of $(G,\calA)$ that is not simple.

First we suppose that $g$ is quadratic.  Let $Q$ be the $(G,\calA)$--tree as in Definition~\ref{def:quad} and let $v$ be the vertex in $Q$ where $\Stab_Q(v) \cong \pi_1(\Sigma)$ for some compact, connected surface $\Sigma$.    

There are two cases, depending on whether or not $Q$ is trivial, i.e., a single vertex.  If $Q$ is trivial, then every boundary subgroup of $\Sigma$, other than the one generated by $g$, is a maximal peripheral subgroup.  This implies that each of the peripheral factors in $\calA$ is cyclic and that $G$ is a free group.  The universal cover of $\Sigma$ embeds in the hyperbolic plane and the composition of the quotient maps $\bd G \to \bd(G,\calA) \to \calD(\calL_g)$ is the circular version of the Cantor map.  See~\cite[Example~1.3]{ar:Cashen16} for more details.  Hence $\calD(\calL_g)$ is homeomorphic to a circle, which trivially can be viewed as the inverse limit of a tree of circles.

Next, we assume that $Q$ is non-trivial.  It is easy to easy that $Q$ is a JSJ decomposition of $(G,\calA \cup \calN_g)$ over parabolic subgroups and thus without loss of generality, we can assume that it represents the canonical JSJ tree of cylinders for splittings of $(G,\calA \cup \calN_g)$ over peripheral subgroups (see~\cite[Remark~2.13]{ar:GH22}).  Thus $Q$ is the cut point tree constructed by Dasgupta--Hruska~\cite[Theorem~1.2(1)]{ar:DH}.  The vertex $v \in Q$ corresponds to a vertex $B \in V_0(\calT_{\rm cut})$.  Let $G_B$, $\calP_B$ and $\calL_g^B$ be as in Lemma~\ref{lem:B}.  By the first case when $Q$ is trivial, we have that $\calD_{(G_B,\calP_B)}(\calL_g^B)$ is a circle.  Hence, as every vertex in $V_0(\calT_{\rm cut})$ is a translate of $B$, we have that $\calD(\calL_g)$ is homeomorphic to an inverse limit of a tree of circles by the result of Dasgupta--Hruska~\cite[Theorem~1.2(3)]{ar:DH}.  This shows that \eqref{item:quad-quad} implies \eqref{item:quad-circle}.

Now suppose that $\calD(\calL_g)$ is homeomorphic to an inverse limit of a tree of circles.  In particular, there are only two types of minimal cut sets in $\calD(\calL_g)$:
\begin{enumerate}[label=(\roman*)]
\item a cut point corresponding to a vertex in $V_1(\calT_{\rm cut})$; or
\item a cut pair that belong to one of the circles comprising the inverse limit.
\end{enumerate} 
In particular, any minimal cut set that is not a single cut point consists of two points.  By Lemmas~\ref{lem:parabolic cut} and \ref{lem:edge cut set}, if $T \in \calO$ is Whitehead reduced for $g$ and $v \in T$ is a vertex, then every edge cut set in $\Wh_T(\calL_g,v)$ consists of exactly two edges.  (Note $q(g^\infty)$ cannot be a cut point as $g$ is not simple, c.f. proof of Lemma~\ref{lem:cut point tree}.)  Since $g$ is not simple, the Whitehead graph $\Wh_T(\calL_g,v)$ is connected and hence it is a circle.  Thus \eqref{item:quad-circle} implies \eqref{item:quad-WH circle}.

Suppose that $T \in \calO$ is a Grushko tree such that for any vertex $v \in T$, the Whitehead graph $\Wh_T(\calL_g,v)$ is a circle.  Thus every vertex in the Whitehead graph has degree equal to two.  This implies that every edge incident to $v$ is crossed by exactly two translates of $T_g$.  As this holds for every vertex in $T$, we have that $g$ is quadratic in $T$.  Hence \eqref{item:quad-WH circle} implies \eqref{item:quad-quad in T}.

Guirardel--Horbez prove that if $g$ is quadratic in some Gruskho tree $T \in \calO$, then $g$ is quadratic whenever $g$ is not conjugate to its inverse~\cite[Lemma~2.19]{ar:GH22}.  As explained before, as we are assuming that $G$ is torsion-free, $g$ cannot be conjugate to its inverse.  Hence this result of Guirardel--Horbez gives us that \eqref{item:quad-quad in T} implies \eqref{item:quad-quad}.
\end{proof}

We record the following consequence of Lemma~\ref{lem:B} and Proposition~\ref{prop:quadratic}.

\begin{corollary}\label{co:inseparable}
If a non-peripheral element $g \in G$ is $\calZ$--simple but not simple nor quadratic, then $\calD(\calL_g)$ contains an inseparable loxodromic cut pair.
\end{corollary}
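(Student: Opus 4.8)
The plan is to run the trichotomy of Proposition~\ref{prop:trichotomy} and then rule out the two non-cut-pair outcomes using the characterization of quadratic elements in Proposition~\ref{prop:quadratic}, descending into the vertex group furnished by Lemma~\ref{lem:B} to handle the cut point case. First I would record the hypotheses in usable form: since $g$ is not simple, Corollary~\ref{co:not simple} gives that $\calD(\calL_g)$ is connected, and since $g$ is moreover $\calZ$--simple, Lemma~\ref{lem:cut set} produces a loxodromic cut pair in $\calD(\calL_g)$. Thus Proposition~\ref{prop:trichotomy} applies and yields one of three alternatives. The circle alternative is excluded immediately: a circle is (trivially) an inverse limit of a tree of circles, so by the equivalence $\eqref{item:quad-circle}\Leftrightarrow\eqref{item:quad-quad}$ of Proposition~\ref{prop:quadratic} it would force $g$ to be quadratic, contrary to hypothesis. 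The inseparable loxodromic cut pair alternative is exactly the desired conclusion. Hence the only remaining work is the cut point alternative.

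So suppose $\calD(\calL_g)$ contains a cut point, so that the Dasgupta--Hruska tree $\calT_{\rm cut}$ is nontrivial. I would take $B \in V_0(\calT_{\rm cut})$ to be the unique vertex fixed by $g$ provided by Lemma~\ref{lem:cut point tree}\eqref{item:unique vertex}, and form the pair $(G_B,\calP_B)$ together with the line pattern $\calL_g^B$ as in Lemma~\ref{lem:B}. The key is then the dichotomy of Lemma~\ref{lem:B}\eqref{item:B dichotomy}: either $\calD_{(G_B,\calP_B)}(\calL_g^B)$ is homeomorphic to a circle, or it contains an inseparable loxodromic cut pair. In the latter situation, Lemma~\ref{lem:B}\eqref{item:B cut pairs} promotes this cut pair to an inseparable loxodromic cut pair of $\calD(\calL_g)$, which finishes the argument.

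It remains to dispose of the subcase in which $\calD_{(G_B,\calP_B)}(\calL_g^B)$---equivalently $B$ itself, by Lemma~\ref{lem:B}\eqref{item:B boundary}---is a circle. Here I would invoke Lemma~\ref{lem:B}\eqref{item:B single}: every vertex of $V_0(\calT_{\rm cut})$ is a $G$--translate of $B$, so each such vertex piece is a circle. The Dasgupta--Hruska gluing---the same construction used to prove $\eqref{item:quad-quad}\Rightarrow\eqref{item:quad-circle}$ in Proposition~\ref{prop:quadratic}---then identifies $\calD(\calL_g)$ as an inverse limit of a tree of circles, whence Proposition~\ref{prop:quadratic} forces $g$ to be quadratic, again contradicting the hypothesis. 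Thus this subcase cannot occur, and in every surviving case $\calD(\calL_g)$ contains an inseparable loxodromic cut pair.

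The main obstacle is precisely this cut point case: a bare loxodromic cut pair need not be inseparable, and to promote it one must pass to the vertex group and recurse. The reason the argument goes through cleanly is that all the delicate bookkeeping---that $(G_B,\calP_B)$ is again a torsion-free free product \eqref{item:B non-peripheral}, that $g$ remains $\calZ$--simple and non-simple relative to $\calP_B$ \eqref{item:B simple}, that the $V_0$--vertices form a single orbit \eqref{item:B single}, and that inseparable cut pairs of the vertex boundary are inseparable in $\calD(\calL_g)$ \eqref{item:B cut pairs}---has already been assembled in Lemma~\ref{lem:B}, so the corollary reduces to assembling these facts together with the exclusion of the circle via non-quadraticity.
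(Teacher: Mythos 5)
Your proposal is correct and takes essentially the same approach as the paper: the paper's proof of Corollary~\ref{co:inseparable} is exactly the combination of Lemma~\ref{lem:B}\eqref{item:B dichotomy} with Proposition~\ref{prop:quadratic} to exclude the circle, followed by promotion via Lemma~\ref{lem:B}\eqref{item:B cut pairs}, and your explicit unpacking of the circle subcase (single orbit of $V_0$--vertices plus the Dasgupta--Hruska inverse-limit gluing) is precisely what the paper's compressed citation of Proposition~\ref{prop:quadratic} leaves implicit. The only difference is cosmetic: you first run Proposition~\ref{prop:trichotomy} on $\calD(\calL_g)$ and split off the no-cut-point case, whereas the paper applies Lemma~\ref{lem:B} uniformly (when $\calD(\calL_g)$ has no cut points, $\calT_{\rm cut}$ is a single vertex and $B = \calD(\calL_g)$), so your preliminary trichotomy step is harmless but redundant.
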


\begin{proof}
Indeed, using the notation from Lemma~\ref{lem:B}, as $g$ is not quadratic, we must have that $\calD_{(G_B,\calP_B)}(\calL_g^B)$ contains an inseparable loxodromic cut pair by Lemma~\ref{lem:B}\eqref{item:B dichotomy} and Proposition~\ref{prop:quadratic}.  By Lemma~\ref{lem:B}\eqref{item:B cut pairs} this is also an inseparable cut pair in $\calD(\calL_g)$.
\end{proof}


\section{Proof of Theorem~\ref{thm:bounded projections} for quadratic elements}\label{sec:quadratic}

Using Proposition~\ref{prop:quadratic} and a corollary of Guirardel--Horbez, we can prove Theorem~\ref{thm:bounded projections} for quadratic elements with a strategy that is similar to the one for simple elements in Section~\ref{sec:simple}.

\begin{proposition}\label{prop:length bounded quadratic}
Let $(G,\calA)$ be a non-sporadic torsion-free free product.  For all $L > 0$, there is a $D_1 > 0$ such that if $g \in G$ is quadratic, then the diameter of $\pi(\calO_L(g)) \subset \ZF$ is at most $D_1$.
\end{proposition}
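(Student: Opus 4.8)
The plan is to run the same three--move argument as in the simple case (Proposition~\ref{prop:length bounded simple}), but with the free splitting there replaced by a one--edge $\calZ$--splitting whose edge stabilizer is a \emph{short simple} element supplied by the surface $\Sigma$ attached to $g$. Fix $L > 0$. Since a quadratic element is by definition not simple, Corollary~\ref{co:whitehead reduced} lets me replace any $T \in \calO_L(g)$ by a tree $T' \in \calO_L(g)$ that is Whitehead reduced for $g$, at the cost of $d(\pi(T),\pi(T')) \le L$; so it suffices to bound the diameter of the image of $\pi$ over the Whitehead reduced trees in $\calO_L(g)$.

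First I would extract the surface structure. By Proposition~\ref{prop:quadratic}, for a Whitehead reduced $T'$ the element $g$ is quadratic in $T'$: its axis $T'_g$ crosses each edge orbit of $T'$ exactly twice, so $T'/G$ has exactly $\abs{g}_{T'}/2 \le L/2$ edge orbits and $\Wh_{T'}(\calL_g,v)$ is a circle at every vertex $v$. Thus $T'$ presents $\Sigma$ as the ribbon neighbourhood of the finite graph $T'/G$, with $g$ running along its boundary. The heart of the argument is then to produce a non-peripheral element $a$ together with a one--edge $\calZ$--splitting $S$ of $(G,\calA)$ in which $g$ is elliptic and $\I{a}$ is the edge stabilizer, and for which $\abs{a}_{T'}$ is bounded in terms of $L$ alone. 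The splitting $S$ comes from the tree $Q$ of Definition~\ref{def:quad}: an essential, non--boundary--parallel simple closed curve $c$ on $\Sigma$ can be used to blow up the surface vertex of $Q$ and then collapse every other edge orbit, yielding a $\calZ$--splitting over $\I{a}$ with $a = [c]$ non-peripheral and $g$ elliptic (exactly the construction illustrated in Example~\ref{ex:quadratic}). The element $a$ is then automatically simple, since edge stabilizers of $\calZ$--splittings are simple (Lemma~\ref{lem:edge stabs in Z-splittings}). For the length bound I would take $c$ to be carried by a shortest essential \emph{embedded} cycle in the graph $T'/G$; such a cycle uses each edge at most once, so $\abs{a}_{T'} \le L/2$. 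This is the step where I expect to lean on the cited corollary of Guirardel--Horbez: it is needed both to guarantee that a non-peripheral embedded curve exists (ruling out the degenerate small--surface cases, which for $\calZ$--simple $g$ cannot occur in a non-sporadic free product) and to control $\abs{a}_{T'}$ uniformly.

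Once $a$ is in hand the proof reduces to the simple case. Because $a$ is simple and $T' \in \calO_{L/2}(a)$, the argument of Proposition~\ref{prop:length bounded simple} (via Lemma~\ref{lem:whitehead reduced} and Lemma~\ref{lem:simple}) produces a free splitting $S_a$ in which $a$ is elliptic with $d(\pi(T'),S_a) \le L/2 + 1$. Since $a$ is non-peripheral and elliptic in both $S$ and $S_a$, Definition~\ref{def:zf} gives $d(S,S_a) \le 1$, hence $d(\pi(T'),S) \le L/2 + 2$. Applying this to two Whitehead reduced trees $T'_0, T'_1 \in \calO_L(g)$ yields $\calZ$--splittings $S_0,S_1$ in which $g$ is elliptic, so $d(S_0,S_1)\le 1$, and therefore
\begin{equation*}
d(\pi(T'_0),\pi(T'_1)) \le d(\pi(T'_0),S_0) + d(S_0,S_1) + d(S_1,\pi(T'_1)) \le L + 5.
\end{equation*}
Adding the two reduction costs of at most $L$ gives a bound $D_1 = 3L + 5$ depending only on $L$.

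The main obstacle is the middle paragraph: producing the bounded--length non-peripheral curve $c$ and verifying that cutting along it leaves $g$ on one side. The difficulty is partly geometric (arranging that the curve is embedded, non-peripheral, and short in $T'$ simultaneously) and partly bookkeeping (promoting a splitting of the surface vertex group to a genuine $\calZ$--splitting of $(G,\calA)$ through the tree $Q$, while checking that the resulting edge group is cyclic and non-peripheral so that $S$ is a legitimate vertex of $\ZF$). This is precisely what Proposition~\ref{prop:quadratic} together with the Guirardel--Horbez corollary is meant to supply; everything downstream is the same triangle--inequality bookkeeping as in the simple case.
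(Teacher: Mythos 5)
Your outer scaffolding is the same as the paper's: reduce to a Whitehead reduced tree via Corollary~\ref{co:whitehead reduced} (cost $L$), find a $\calZ$--splitting at bounded distance from $\pi(T')$ in which $g$ is elliptic, and finish with the triangle inequality through the elliptic-in-both edge of Definition~\ref{def:zf}. But the middle step is where the paper's proof and yours diverge, and yours has a genuine gap. The paper invokes Guirardel--Horbez~\cite[Corollary~2.20]{ar:GH22} exactly once and as a black box: if $g$ is quadratic in $T'$, there is a $\calZ$--splitting $S_0$ with $d(\pi(T'),S_0) \leq 2$ in which $g$ is elliptic. That is the entire content of the step --- no intermediate short element $a$, no detour through the simple case --- yielding $D_1 = 2L+5$. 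You instead guess at the content of that corollary and replace it with a by-hand construction: a shortest essential embedded cycle in $T'/G$, promoted to a curve on $\Sigma$ and then to a $\calZ$--splitting. This construction fails. First, $T'/G$ is a graph of groups whose underlying graph can be a \emph{tree}, so no embedded cycle exists at all: take $G = A_1 \ast A_2 \ast A_3$ (so $\xi = 3$, non-sporadic) with $\Sigma$ a four-holed sphere, $\pi_1(\Sigma) = \langle x,y,z\rangle$, boundaries $x,y,z,g$ amalgamated into $A_1,A_2,A_3$ respectively; a Grushko quotient is a tripod. Here short essential non-boundary-parallel curves on $\Sigma$ certainly exist (e.g.\ the curve representing $xy$, with $\abs{xy}_{T'} = 4$), but they traverse the peripheral vertex groups and correspond to \emph{paths} through vertex groups of $T'/G$, not embedded cycles. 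Second, even when cycles exist, your claim that ``$T'$ presents $\Sigma$ as the ribbon neighbourhood of $T'/G$'' is only correct when $\calA = \emptyset$: in general $\Sigma$ is the vertex group of $Q$, the peripheral factors are attached along boundary circles, and an embedded cycle in $T'/G$ need not be conjugate into $\pi_1(\Sigma)$, need not be non-boundary-parallel, and the splitting it produces need not have $g$ elliptic.

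Two smaller points. Your parenthetical that the degenerate surfaces admitting no essential non-boundary-parallel two-sided curve ``cannot occur in a non-sporadic free product'' is asserted without proof; it does appear to be true (the pair of pants forces $(G,\calA)$ to be $A_1 \ast A_2$ or smaller, hence sporadic, since Definition~\ref{def:quad} makes $Q/G$ a star whose outer vertex groups absorb the two peripheral boundary subgroups of rank-two $\pi_1(\Sigma)$), but it is an argument you must supply, and it is not what the cited corollary says. Finally, your endgame --- routing the unknown short simple element $a$ through the machinery of Proposition~\ref{prop:length bounded simple} to get $d(\pi(T'),S) \leq L/2 + 2$ --- is sound and is in fact exactly how the paper handles the general non-quadratic $\calZ$--simple case in Proposition~\ref{prop:length bounded elliptic}; but in the quadratic case that strategy is precisely what is unavailable without a correct replacement for the curve construction, because the decomposition space $\calD(\calL_g)$ is (an inverse limit of a tree of) circles and contains no inseparable loxodromic cut pairs, so the Cashen--Macura-style argument of Proposition~\ref{prop:small cut pairs} cannot produce $a$ for you. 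This is why the paper isolates the quadratic case and resolves it by direct citation.
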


\begin{proof}
Given $L$, we set $D_1 = 2L + 5$.

Let $g \in G$ be a quadratic element and consider a Grushko tree $T_0 \in \calO_L(g)$.  As $g$ is not simple, by Corollary~\ref{co:whitehead reduced} there is a Grushko tree $T \in \calO_L(g)$ where $d(\pi(T_0),\pi(T)) \leq L$ that is Whitehead reduced for $g$.  By Proposition~\ref{prop:quadratic}, we have that $g$ is quadratic in $T$.  Guirardel--Horbez proved that in this case, there is a $\calZ$--splitting $S_0 \in \ZF$ such that $d(\pi(T),S_0) \leq 2$ and in which $g$ is elliptic~\cite[Corollary~2.20]{ar:GH22}.  We have that $d(\pi(T_0),S_0) \leq L+2$.

Given any other Grushko tree $T_1 \in \calO_L(g)$, repeating the argument from above, we have that there is a $\calZ$--splitting $S_1 \in \ZF$ in which $g$ is elliptic and where $d(\pi(T_1),S_1) \leq L + 2$. 

As $g$ is elliptic in both $S_0$ and $S_1$, we have that $d(S_0,S_1) \leq 1$.  Therefore, we find:
\begin{equation*}
d(\pi(T_0),\pi(T_1)) \leq d(\pi(T_0),S_0) + d(S_0,S_1) + d(S_1,\pi(T_1)) \leq 2L + 5 = D_1.\qedhere
\end{equation*} 
\end{proof}


\section{Finding bounded length splitting elements}\label{sec:short}

The main result of this section is Proposition~\ref{prop:splitting bound}.  This proposition states that for a $\calZ$--simple element $g \in G$ that is not simple nor quadratic, given a Grushko tree $T \in \calO_L(g)$ that is Whitehead reduced for $g$, there is a $\calZ$--splitting $S$ where $g$ is elliptic and an element $a$ that fixes an edge in $S$ whose length is bounded in terms of $L$.  This is the main ingredient for proving Theorem~\ref{thm:bounded projections} for $\calZ$--simple elements, which is carried out in the next section in Proposition~\ref{prop:length bounded elliptic}.  The element $a$ essentially plays the role of a boundary curve for the subsurface filled by a given curve $\gamma \subset \Sigma$ as described in the ``non-filling'' case of Theorem~\ref{thm:bounded surface projections} in the Introduction.  The strategy for finding the element $a$ is similar to that carried out by Cashen--Macura~\cite{ar:CM11} and Cashen~\cite{ar:Cashen16} who proved a similar statement in the case when $\calA = \emptyset$  and thus $G$ is free.  Specifically, we seek to analyze cut pairs in the decomposition space $\calD(\calL_g)$.   

Let $(G,\calA)$ be a fixed non-sporadic torsion-free free product.  The next two lemmas will be used to control the number of components as we start to splice together Whitehead graphs.     

\begin{lemma}\label{lem:local-model}
Let $\calL$ be a periodic line collection.  Suppose that $T \in \calO$ is a Grushko tree that is Whitehead reduced for $\calL$.  Let $v$ be a vertex in $V_\infty(T)$, fix two distinct vertices $Y, Y' \in T_v(\calL)$, and consider the subgraph $K \subset T_v(\calL)$ spanned by the directions at $v$ other than $Y$ and $Y'$.  The following statements hold.
\begin{enumerate}
\item\label{item:local-model infinite} The vertex $Y$ is adjacent in $T_v(\calL)$ to a vertex $Z \in K$ that belongs to an infinite component of $K$.  Likewise, the vertex $Y'$ is adjacent in $T_v(\calL)$ to a vertex $Z' \in K$ that belongs to an infinite component of $K$. 

\item\label{item:local-model finite} For each finite component $K_0 \subset K$, there are vertices $Z,Z' \in K_0$ such that $Y$ is adjacent to $Z$ and $Y'$ is adjacent to $Z'$ in $T_v(\calL)$. 

\end{enumerate}
\end{lemma}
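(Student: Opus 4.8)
The plan is to derive both statements from Lemma~\ref{lem:infinite line}, which guarantees that every vertex of $T_v(\calL)$ lies on an embedded bi-infinite line, together with the local finiteness of $T_v(\calL)$ and the fact that each of its components is infinite. The governing idea throughout is that an embedded bi-infinite line cannot pass through a single vertex twice, so deleting one vertex can never isolate a finite piece of $T_v(\calL)$.

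For part \eqref{item:local-model infinite}, I would fix an embedded line $\alpha_Y \from \RR \to T_v(\calL)$ with $\alpha_Y(0) = Y$ provided by Lemma~\ref{lem:infinite line}. Since $\alpha_Y$ is injective, $Y'$ occurs on this line at most once, say $Y' = \alpha_Y(k)$ with $k \neq 0$, or not at all. I then travel away from $Y'$: choose a sign $\epsilon \in \{\pm 1\}$ so that $Y'$ does not lie on the ray $\{\alpha_Y(\epsilon j) : j \geq 1\}$ (take $\epsilon = -1$ if $k>0$, $\epsilon=+1$ if $k<0$, and either sign if $Y'$ is off the line), and set $Z = \alpha_Y(\epsilon)$. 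Then $Z$ is adjacent to $Y$ and differs from both $Y$ and $Y'$, so $Z \in K$; moreover the ray $\{\alpha_Y(\epsilon j) : j \geq 1\}$ avoids $\alpha_Y(0)=Y$ and avoids $Y'$, hence is an infinite connected subgraph of $K$ containing $Z$, so $Z$ lies in an infinite component of $K$. The assertion for $Y'$ follows by the symmetric argument applied to a line through $Y'$.

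For part \eqref{item:local-model finite}, I would prove the stronger claim that every finite component $K_0$ of $K$ is adjacent in $T_v(\calL)$ to both $Y$ and $Y'$; the required $Z, Z' \in K_0$ are then immediate. Suppose not, say no vertex of $K_0$ is adjacent to $Y'$. Because $K_0$ is a connected component of $T_v(\calL) - \{Y, Y'\}$, the only vertices outside $K_0$ that can be adjacent to a vertex of $K_0$ are $Y$ and $Y'$, and by assumption this leaves only $Y$. Fix any $W \in K_0$ and an embedded line $\alpha_W$ through $W$ (Lemma~\ref{lem:infinite line}). Since $K_0$ is finite while each ray of $\alpha_W$ is infinite, each ray must eventually leave $K_0$, and the first vertex lying outside $K_0$ on each ray is an external neighbor of $K_0$, hence equal to $Y$. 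This yields parameters $m > 0 > n$ with $\alpha_W(m) = Y = \alpha_W(n)$, contradicting injectivity of $\alpha_W$. (If instead $K_0$ were adjacent to neither $Y$ nor $Y'$, it would be a full component of $T_v(\calL)$ and therefore infinite, again a contradiction.) Hence $K_0$ is adjacent to $Y'$, and the symmetric argument shows it is adjacent to $Y$.

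Both arguments are short once Lemma~\ref{lem:infinite line} is available, so I do not anticipate a serious obstacle. The only point demanding care is the bookkeeping in part \eqref{item:local-model finite} that the ``first exit'' vertex of a ray must be one of $Y, Y'$; this relies on $K_0$ being an \emph{induced} connected component of $T_v(\calL) - \{Y,Y'\}$, so that every edge leaving $K_0$ lands on $Y$ or $Y'$. Once this is recorded, the injectivity of the embedded lines does all the work in both parts.
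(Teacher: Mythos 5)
Your proof is correct and takes essentially the same approach as the paper: both parts rest on the embedded lines of Lemma~\ref{lem:infinite line} together with their injectivity, with part~\eqref{item:local-model infinite} argued identically. Your contradiction argument in part~\eqref{item:local-model finite} is just a contrapositive rephrasing of the paper's direct observation that an embedded line through a vertex of a finite component $K_0$ must first exit $K_0$ through $Y$ on one ray and through $Y'$ on the other (the paper then takes $Z,Z'$ to be the penultimate vertices on these rays), so the two arguments coincide in substance.
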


We note the possibility that $Z = Z'$ in the above statements.

\begin{proof}
We consider the embedded line $\alpha_Y \from \RR\to T_v(\calL)$ from Lemma~\ref{lem:infinite line}.  If $Y' \notin \alpha_Y([0,\infty))$ we set $Z = \alpha_Y(1)$.  Since $\alpha_Y([1,\infty)) \subseteq K$ we see that $Z$ belongs to an infinite component of $H$.  If $Y' \notin \alpha_Y((-\infty,0])$ we set $Z = \alpha_Y(-1)$ and argue similarly.  As $Y \neq Y'$ and $\alpha_Y$ is an embedding, we cannot have that both $Y' \in \alpha_Y((-\infty,0])$ and $Y' \in \alpha_Y([0,\infty)$.  Thus $Z$ is well-defined.  We can repeat the argument using $\alpha_{Y'}$ to find $Z' \in K$ that is adjacent to $Y'$ and that lies in an infinite component of $K$.  This proves~\eqref{item:local-model infinite}.

Suppose that $K_0 \subset K$ is a finite component, fix a vertex $Y_0 \in K_0$ and consider the embedded line $\alpha_{Y_0} \from \RR \to T_v(\calL)$ from Lemma~\ref{lem:infinite line}.  As $Y_0$ is in a finite component of $K$, we must have a pair of integers $i < 0 < j$ where, after possibly precomposing $\alpha_{Y_0}$ by $t \mapsto -t$, $\alpha_{Y_0}(i) = Y$ and $\alpha_{Y_0}(j) = Y'$.  We set $Z = \alpha_{Y_0}(i+1)$ and $Z' = \alpha_{Y_0}(j-1)$.  Then $Y$ is adjacent to $Z$ and $Y'$ is adjacent to $Z'$.  Moreover, $Z$ and $Z'$ belong to $K_0$ as evidenced by the subpath of $\alpha_{Y_0}$ from $\alpha_{Y_0}(i+1)$ to $\alpha_{Y_0}(j-1)$.  This proves~\eqref{item:local-model finite}.
\end{proof}

\begin{lemma}\label{lem:loose ends}
Let $\calL$ be a periodic line collection.  Suppose that $T \in \calO$ is a Grushko tree that is Whitehead reduced for $\calL$.  Let $X \subset T$ be an arc whose endpoints are the midpoints of distinct edges $e_-$ and $e_+$, and let $Y_-$ \textup{(}respectively $Y_+$\textup{)} denote the direction of $T- X$ that meets $e_-$ \textup{(}respectively $Y_+$\textup{)}.  The following are true.  
\begin{enumerate}
\item\label{item:component at end} Each component of the graph $\Wh_T(\calL,X) - \{[Y_-],[Y_+]\}$ has a loose end both at $[Y_-]$ and at $[Y_+]$.
\item \label{item:bound on components} The number of components of the graph $\Wh_T(\calL,X) - \{[Y_-],[Y_+]\}$ is bounded above by $\#\abs{\calL}_T$.
\end{enumerate}
\end{lemma}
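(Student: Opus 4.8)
The plan is to deduce part~\eqref{item:bound on components} from part~\eqref{item:component at end}, and to prove part~\eqref{item:component at end} by first recasting it as a non-cut-vertex statement and then inducting along the arc $X$ using the splicing construction of Section~\ref{subsec:splicing}. First observe that since $\bd_0Y_- = x_-$ and $\bd_0Y_+ = x_+$ are midpoints of edges, the vertices $[Y_-]$ and $[Y_+]$ are trivial vertices, distinct from every $\hv$. Next, any edge of $\Wh_T(\calL,X)$ whose two endpoints lie in different components of $\Wh_T(\calL,X) - \{[Y_-],[Y_+]\}$ must be incident to $[Y_-]$ or to $[Y_+]$. Consequently, if $\Wh_T(\calL,X) - \{[Y_+]\}$ is connected then, its only vertex outside $\Wh_T(\calL,X) - \{[Y_-],[Y_+]\}$ being $[Y_-]$, the vertex $[Y_-]$ is adjacent to every component of $\Wh_T(\calL,X) - \{[Y_-],[Y_+]\}$; that is, every such component has a loose end at $[Y_-]$. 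Symmetrically, connectivity of $\Wh_T(\calL,X) - \{[Y_-]\}$ gives a loose end at $[Y_+]$ in every component. Thus part~\eqref{item:component at end} is equivalent to the claim that $\Wh_T(\calL,X)$ is connected and that neither $[Y_-]$ nor $[Y_+]$ is a cut vertex.

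I would prove this claim by induction on the number of vertices of $T$ lying on $X$. For the base case $X$ is a finite star about a single vertex $v$: if $\Stab_T(v) = \{1\}$, then $\Wh_T(\calL,X) = \Wh_T(\calL,v)$ has no admissible cut, in particular no type ii cut, so it is connected with no cut vertex; if $v \in V_\infty(T)$, then Lemma~\ref{lem:whitehead stars} gives that $\Wh_T(\calL,X)$ is connected with no cut vertex other than possibly $\hv$, and $[Y_\pm]\neq\hv$. For the inductive step, peel off the vertex $v_n$ of $X$ incident to $e_+$ as a finite star $Z$, writing $X = X' \cup Z$ with $X' \cap Z$ the midpoint $x'$ of an edge; let $[Y']$ be the direction at $X'$ toward $Z$ and $[Y'']$ the direction at $Z$ toward $X'$ (both trivial vertices, as $\bd_0 Y' = \bd_0 Y'' = x'$). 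By the splicing description, $\Wh_T(\calL,X)$ is obtained by deleting $[Y']$ and $[Y'']$ and joining the resulting loose ends via the lines through $x'$. To see $\Wh_T(\calL,X) - \{[Y_-]\}$ is connected: by the inductive hypothesis $\Wh_T(\calL,X') - \{[Y_-]\}$ is connected, so deleting the single vertex $[Y']$ from it leaves pieces each adjacent to $[Y']$, hence each carrying a loose end after the splice; these loose ends attach to $\Wh_T(\calL,Z) - \{[Y'']\}$, which is connected by the base case, so the whole graph is connected. The argument for $\Wh_T(\calL,X) - \{[Y_+]\}$ is symmetric (now deleting $[Y'']$ from the connected $\Wh_T(\calL,Z) - \{[Y_+]\}$ and splicing onto the connected $\Wh_T(\calL,X') - \{[Y']\}$), and connectivity of $\Wh_T(\calL,X)$ itself then follows since $[Y_-]$ has at least one incident edge as $T$ is Whitehead reduced. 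This establishes part~\eqref{item:component at end}.

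For part~\eqref{item:bound on components}, I would count edges at $[Y_-]$. By part~\eqref{item:component at end} each component of $\Wh_T(\calL,X) - \{[Y_-],[Y_+]\}$ has a loose end at $[Y_-]$, and distinct components claim distinct edges incident to $[Y_-]$, so the number of components is at most the number of edges of $\Wh_T(\calL,X)$ incident to $[Y_-]$. Each such edge is recorded by a unique line $\ell\in\calL$ whose axis $\ell_T$, meeting $Y_-$, crosses $e_-$ and then continues into $X$, and conversely each axis of $\calL$ crossing $e_-$ produces exactly one edge at $[Y_-]$. Finally, the number of axes in $\calL$ crossing a fixed edge is at most $\abs{\calL}_T$: for each generator $g_j$, the translates of $T_{g_j}$ through $e_-$ correspond to the crossings of the loop $T_{g_j}/\I{g_j}$ over the $G$--orbit of $e_-$ in $T/G$, and summing these crossing counts over all edge orbits gives $\abs{g_j}_T$, so the count over a single orbit is at most $\abs{g_j}_T$; summing over $j$ bounds the total by $\abs{\calL}_T$. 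Hence the number of components is at most $\abs{\calL}_T$, as asserted.

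The delicate point is the inductive step: one must verify that the splice genuinely \emph{reconnects} the pieces left after deleting an end direction, and this is where the local inputs (Lemma~\ref{lem:whitehead stars} for vertices in $V_\infty(T)$, and the absence of type ii admissible cuts for trivial-stabilizer vertices) are combined with the loose-end bookkeeping of Section~\ref{subsec:splicing}. In particular, the observation that removing a single vertex from a connected graph leaves every remaining component adjacent to that vertex is what guarantees each piece acquires a loose end to be spliced; keeping track of which loose ends are identified (those coming from lines through $x'$) is the main thing to get right. Part~\eqref{item:bound on components} is then a short consequence, its only subtlety being the length estimate for the number of axes across $e_-$, which follows from Lemma~\ref{lem:local-model}-style reasoning about orbits of edges.
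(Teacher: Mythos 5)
Your proof is correct and takes essentially the same route as the paper's: induction along the arc $X$ by splicing off finite stars, with the trivial-stabilizer base case coming from Whitehead-reducedness (no admissible cut) and part~\eqref{item:bound on components} obtained from the valence of $[Y_-]$, bounded by the number of axes crossing $e_-$ and hence by $\abs{\calL}_T$. The only differences are organizational: you recast part~\eqref{item:component at end} as the equivalent statement that $\Wh_T(\calL,X)$ is connected with neither $[Y_-]$ nor $[Y_+]$ a cut vertex and invoke Lemma~\ref{lem:whitehead stars} for the infinite-stabilizer base case, whereas the paper proves the loose-end property directly from Lemma~\ref{lem:local-model} and carries both statements through the induction simultaneously by noting that splicing preserves the loose-end property and does not increase the number of components.
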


\begin{proof}
We prove both statements at the same time using induction on the number of vertices in $T$ that belong to $X$ via splicing finite stars.  

Suppose that there is a single vertex $v$ in $T$ that belongs to $X$.  If $\Stab_T(v) = \{1\}$, then $\Wh_T(\calL,X)$ is isomorphic to $\Wh_T(\calL,v)$ as the inclusion $T - X \subset T - v$ induces a bijection on directions.  As $T$ is Whitehead reduced for $\calL$, this graph does not have an admissible cut.  Therefore it is connected and does not have a cut vertex.  If some component $U$ of $\Wh_T(\calL,X) - \{[Y_-],[Y_+]\}$ does not have a loose end at $[Y_-]$ nor at $[Y_+]$, then $U$ is also a component of $\Wh_T(\calL,X)$ and therefore $\Wh_T(\calL,X)$ is not connected, which is a contradiction.  Likewise, if some component of $\Wh_T(\calL,X) - \{[Y_-],[Y_+]\}$ has a loose end at $[Y_-]$ but not at $[Y_+]$, then $[Y_-]$ is a cut vertex for $\Wh_T(\calL,X)$, which again is a contradiction.  A similar statement holds by switching the roles of  $[Y_-]$ and $[Y_+]$.  As each component of $\Wh_T(\calL,X) - \{[Y_-],[Y_+]\}$ has a loose end at $[Y_-]$, the number of components in this graph is bounded above by the valence of the vertex $[Y_-]$ in $\Wh_T(\calL,X)$.  This quantity is bounded above by the number of lines $\ell \in \calL$ such that $\ell_T$ meets $e_-$, which is bounded above by $\abs{\calL}_T$.  This verifies the base cases for both statements when $\Stab_T(v) = \{1\}$.   
 
If $\Stab_T(v) = \{\infty\}$ then we consider the graph $T_v(\calL)$ and apply Lemma~\ref{lem:local-model}.  Both $Y_-$ and $Y_+$ represent vertices of $T_v(\calL)$ and the graph $T_v(\calL) - X$ is the subgraph spanned by the complement of these two vertices.  By Lemma~\ref{lem:local-model}\eqref{item:local-model infinite}, the vertex $\hv \in V_{X}(\calL)$ representing every infinite component of $T_v(\calL) - X$ has an edge in $\Wh_T(\calL,v)$ connecting it to $[Y_-]$ and an edge in $\Wh_T(\calL,v)$ connecting it to $[Y_+]$.  Any other vertex $Z$ in $V_{X}(\calL)$ is a vertex of $T_v(\calL) - X$ that belongs to a finite component.   Hence, by Lemma~\ref{lem:local-model}\eqref{item:local-model finite}, there is an edge path in $\Wh_T(\calL,v)$ connecting $[Z]$ to $[Y_-]$ and an edge path in $\Wh_T(\calL,v)$ connecting $[Z]$ to $[Y_+]$.  As before, this shows that the number of components is bounded above by the valence of $Y_-$ in $T_v(\calL)$, which again as before, is bounded by $\#\abs{\calL}_T$.  This completes the verification of the base case.  

For the inductive step, we just observe that splicing together two graphs with loose ends where every component in each has a loose end at each missing vertex results in a graph with loose ends that also has this property and that the number of components does not increase.
\end{proof}

The following proposition shows that every inseparable loxodromic cut pair is given by an element with bounded length.  The argument below is modeled off of that by Cashen~\cite[Lemma~3.26 \& Proposition~4.13]{ar:Cashen16} (see also~\cite[Lemma~4.14]{ar:CM11}).  A version of this argument for hyperbolic groups was given by Barrett~\cite[Section~2.3]{ar:Barrett18}.   

\begin{proposition}\label{prop:small cut pairs}
For all $L > 0$, there is an $R_0 > 0$ with the following property.  If $g$ is a non-peripheral element of $G$, $T \in \calO_L(g)$ is a Grushko tree that is Whitehead reduced for $g$, and $\calD(\calL_g)$ contains an inseparable loxodromic cut pair $\{q(h^{\infty}),q(h^{-\infty})\}$, then there exists a non-peripheral element $a \in G$ where $\abs{a}_T \leq R_0$ and $T_a = T_h$.
\end{proposition}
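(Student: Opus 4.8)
The plan is to take $a$ to be the generator of the maximal cyclic subgroup $N_h$. Since $G$ is torsion-free this $a$ is the indivisible root of $h$, it is non-peripheral, and $T_a = T_h$ holds automatically; moreover $a$ has the smallest translation length among all elements with axis $T_h$. Thus the entire content of the proposition is a bound $\abs{a}_T \le R_0(L)$ on the primitive translation length, and it suffices to produce such a bound for this one $a$.

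First I would set up the dictionary between the cut pair and the Whitehead graph of the axis. Write $P = \{q(h^\infty),q(h^{-\infty})\}$ and $L := T_h$. Since $\calD(\calL_g)$ contains a cut pair it is connected, so $g$ is not simple (Corollary~\ref{co:not simple}), and the hypothesis that $h$ is non-peripheral with respect to $\calA \cup \calN_g$ guarantees that the two endpoints of $L$ are not endpoints of any line of $\calL_g$. Hence $q^{-1}(P) = \{h^\infty,h^{-\infty}\} \subset \bd_\infty$ meets $V_\infty$ trivially and has convex hull exactly $L$, so Lemma~\ref{lem:hull} identifies the components of $\Wh_T(\calL_g,L)$ with those of $\calD(\calL_g) - P$. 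Because a loxodromic cut pair is exact, each such component is two-ended (one end toward each of $q(h^{\pm\infty})$), and the valence $V = \val(P)$ is finite; applying Lemma~\ref{lem:loose ends} to a long sub-arc of $L$ then bounds $V$ by $\abs{g}_T \le L$.

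The heart of the argument is a pigeonhole along $L$. For each edge $e_i$ crossed by $L$ let $\calC_i \subset \calL_g$ be the set of lines crossing $e_i$; since $T$ is Whitehead reduced and $g$ is not simple every edge is crossed, so $1 \le \#\abs{\calC_i}$, while Lemma~\ref{lem:loose ends} gives $\#\abs{\calC_i} \le \abs{g}_T \le L$. At $e_i$ I record the \emph{state} $\sigma_i$: the partition of $\calC_i$ according to which of the $V$ components of $\Wh_T(\calL_g,L)$ each crossing strand belongs to, together with the pattern by which strands persist to $e_{i\pm 1}$. As both the number of strands and the number of components are bounded by $L$, there are at most some $f(L)$ possible states. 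The element $a$ translates the whole configuration with period $\abs{a}_T$, and one fundamental domain contains $\abs{a}_T$ crossed edges; so as soon as $\abs{a}_T > f(L)$, two edges $e_i,e_j$ inside a single fundamental domain carry the same state. After checking via Lemma~\ref{lem:parabolic cut} and the proof of Lemma~\ref{lem:cut point tree} that every cut point of $\calD(\calL_g)$ is peripheral, so that Lemma~\ref{lem:edge cut set} applies, I would match the edge cut sets $q(\calL_{e_i})$ and $q(\calL_{e_j})$ through their common state to extract an exact cut pair separating $q(h^\infty)$ from $q(h^{-\infty})$. This contradicts the inseparability of $P$, forcing $\abs{a}_T \le R_0 := f(L)$.

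The main obstacle is precisely this last implication: converting a combinatorial repetition of states into a genuine separating exact cut pair in $\calD(\calL_g)$. The side-trees hanging off $L$ may be arbitrarily complicated and need not agree where the states agree, so one cannot simply "shorten the period" and read off a shorter axial group element; the repetition has to be exploited purely through the connectivity recorded by $\Wh_T(\calL_g,\param)$ and its splicing description (Section~\ref{subsec:splicing}), re-routing the two matching edge cuts into a single inseparable-violating pair. Making the bookkeeping of loose ends, components, and two-endedness cooperate — and verifying that the pair produced is genuinely exact and genuinely separates $P$ — is where the real work lies, and I expect to follow the analogous arguments of Cashen~\cite{ar:Cashen16} and Cashen--Macura~\cite{ar:CM11} closely here.
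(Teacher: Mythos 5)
Your reduction and setup are sound and agree with the paper's: by Lemma~\ref{lem:hull} the components of $\Wh_T(\calL_g,T_h)$ correspond to the components of $\calD(\calL_g)-P$, Lemma~\ref{lem:loose ends} bounds their number $c$ by $L$, and taking $a$ to be the indivisible root of $h$ is harmless, since any element with $T_a=T_h$ is a power of it. But the step you explicitly defer --- converting the repetition of states into an exact cut pair separating $P$ --- is the entire content of the proposition, and the mechanism you sketch cannot work as stated, for two reasons. First, your state does not record the $G$--orbit of the edge, so the matched edges $e_i,e_j$ may lie in different orbits, in which case no element of $G$ relates the local configurations at $e_i$ and $e_j$ and there is nothing to ``re-route''; the pigeonhole must be run over the orbit of a single edge, which is exactly why the paper's constant is $R_0=2\xi(G,\calA)c^L+1$, with $2\xi(G,\calA)$ bounding the number of oriented edge orbits and $c^L$ the number of component-functions $\{1,\dots,L_e\}\to\{1,\dots,c\}$. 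Second, inseparability only quantifies over exact cut \emph{pairs}, while the edge cut sets $q(\calL_{e_i})$, $q(\calL_{e_j})$ of Lemma~\ref{lem:edge cut set} are finite cut sets of cardinality up to $L$; matching them produces no exact cut pair, and the only available source of exact cut pairs separating $P$ is loxodromic ones, i.e., group elements --- which, by the first point, your pigeonhole does not provide.

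The paper's proof supplies precisely the missing maneuver. The orbit-plus-state pigeonhole yields $a_0,a_1\in G$ with $a_0e,a_1e$ meeting a segment $X_0\subset T_h$ of length $R_0$ coherently oriented and with identical component-functions $f_{a_0}=f_{a_1}$; one sets $a=a_1a_0^{-1}$, a genuine hyperbolic element with $\abs{a}_T\le R_0$ whose axis contains $a_0e$ and $a_1e$. The matching of component-functions guarantees that splicing the $\langle a\rangle$--translates of $\Wh_T(\calL_g,X)-\{[Y_-],[Y_+]\}$ to form $\Wh_T(\calL_g,T_a)$ never merges distinct components of $\Wh_T(\calL_g,T_h)$, so $\{q(a^{\infty}),q(a^{-\infty})\}$ is a loxodromic cut pair. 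Then --- and this is the step your sketch lacks --- if $T_a\neq T_h$, the pair contradicting inseparability is not extracted from edge cuts at all: it is the translate $\{q(ah^{\infty}),q(ah^{-\infty})\}$ of $P$ itself, which the paper shows (via the four branches $X_{a,\pm},X_{h,\pm}$ of $T_a\cup T_h$) has its two points in different components of $\calD(\calL_g)-P$, whence by symmetry of separation among exact cut pairs it separates $P$. This forces $T_a=T_h$, and the bound for the root follows since $a$ is then a power of it. If you wish to keep your formulation, note that including the oriented edge orbit in your state and pigeonholing within one fundamental domain would produce $b=a_1a_0^{-1}$ with $0<\abs{b}_T<\abs{a}_T$, and the same translation argument gives $T_b=T_h$, contradicting minimality of the root's translation length; but either way the translation-of-$P$ argument is indispensable, and it is exactly what your proposal postpones as ``the real work.''
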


\begin{proof}
Fix a non-peripheral element $g \in G$ and suppose that $T \in \calO_L(g)$ is a Grushko tree that is Whitehead reduced for $g$.  By Proposition~\ref{prop:decomposition connected}, we have that $\calD(\calL_g)$ is connected.  As $\{q(h^{\infty}),q(h^{-\infty})\}$ is a cut pair, by Lemma~\ref{lem:hull} we have that $\Wh_T(\calL_g,T_h)$ is not connected.  Enumerate the components of $\Wh_T(\calL_g,T_h)$ by $1,\ldots,c$ and observe that $c \leq L$ by Lemma~\ref{lem:loose ends}\eqref{item:bound on components} since $h$ acts cocompactly on $T_h$.  Set $R_0 = 2\xi(G,\calA)c^L + 1$.  Fix a segment $X_0 \subset T_h$ of length $R_0$.  There is some edge $e \subset X_0$ whose orbit meets $X_0$ with the same orientation at least $c^L+1$ times.

Let $\calL_e$ be the subset of lines $\ell \in \calL_g$ such that $\ell_T$ contains the edge $e$.  We observe that each line in $\calL_e$ corresponds to an edge in $\Wh_T(\calL,T_h)$.  The set $\calL_e$ contains $L_e$ elements where $L_e \leq L$.  Fix a bijection $\beta \from \{1,\ldots,L_e\} \to \calL_e$.  Consider the function $f \from \{1,\ldots,L_e\} \to \{1,\ldots, c\}$ where $f(i)$ records the component of $\Wh_T(\calL_g,T_h)$ that the edge associated to $\beta(i)$ belongs to.  

If $a' \in G$ is such that $a'e$ meets $X_0$ with the preferred orientation, then there is an associated bijection $\beta_{a'} \from \calL_e \to \calL_{a'e}$ given by $\beta_{a'}(\ell) = a'\ell$.  We get another function $f_{a'} \from \{1,\ldots,L_e\} \to \{1,\ldots, c\}$ using the bijection $\beta_{a'} \circ \beta$.  In other words, the function $f_{a'}(i)$ records the component of $\Wh_T(\calL_g,T_h)$ that the edge associated to $a'\beta(i)$ belongs to.  

As the orbit of $e$ meets $X_0$ with the same orientation at least $c^L + 1$ times, this procedure produces $c^L+1$ functions $f_{a'} \from \{1,\ldots,L_e\} \to \{1,\ldots,c\}$.  Hence two must be identical functions.  Let $a_0,a_1 \in G$ be two such elements where $f_{a_0} = f_{a_1}$ and set $a = a_1a_0^{-1}$.  We observe that the edges $a_0e$ and $a_1e$ belong to $T_a$ as they are coherently oriented.  In particular, $a$ is non-peripheral and $\abs{a}_T \leq R_0$

Let $x$ be the midpoint of $a_0e$ and let $X \subset T$ be the arc from $x$ to $ax$.  Let $Y_- \subset T$ be the direction of $T - X$ that meets $a_0e$ and let $Y_+ \subset T$ be the direction of $T - X$ that meets $a_1e$.  Splicing the translates of $\Wh_T(\calL_g,X) - \{[Y_-],[Y_+]\}$ by powers of $a$ together to form $\Wh_T(\calL_g,T_a)$ may introduce some additional connections between the components.  However, by the construction of $a$, we will never splice together components that lie in separate components of $\Wh_T(\calL_g,T_h)$.  Hence, we see that the number of components of $\Wh_T(\calL_g,T_a)$ is bounded below by $c$.  Thus $\{q(a^\infty),q(a^{-\infty})\}$ is a loxodromic cut pair.  It remains to show that $T_a = T_h$, in other words, $a$ and $h$ are powers of the same element in $G$.

Without loss of generality, we assume that $a$ and $h$ translate in the same direction along $T_a \cap T_h$ which necessarily includes the segment $X$.  Suppose that $T_a \neq T_h$.  Then $T_a \cup T_h - (T_a \cap T_h)$ decomposes into four subsets $X_{a,+},X_{a,-} \subset T_a$ and $X_{h,+},X_{h,-} \subset T_h$ where $aX_{a,+} \subset X_{a,+}$, $a^{-1}X_{a,-} \subset X_{a,-}$, $hX_{h,+} \subset X_{h,+}$, and $h^{-1}X_{h,-} \subset X_{h,-}$.  See Figure~\ref{fig:inseparable}.

\begin{figure}[ht]
\centering
\begin{tikzpicture}
\def\ee{1.8}
\def\len{6}
\def\sqr{0.707106781}
\node at (-1.25*\ee,0) {$h^{-\infty}$};
\node at (\len*\ee + 1.2*\ee,0) {$h^\infty$};
\node at (\ee-2.6*\sqr*\ee,2.6*\sqr*\ee) {$a^{-\infty}$};
\node at (5*\ee+2.6*\sqr*\ee,2.6*\sqr*\ee) {$a^\infty$};
\node at (5*\ee+\sqr*\ee + 1.7*\ee,\sqr*\ee) {$ah^\infty$};
\node at (3*\ee-1.7*\sqr*\ee,-1.7*\sqr*\ee) {$ah^{-\infty}$};
\filldraw[red] (-0.8*\ee,0) circle [radius=1pt];
\filldraw[red] (-0.7*\ee,0) circle [radius=1pt];
\filldraw[red] (-0.6*\ee,0) circle [radius=1pt];
\filldraw[red] (\len*\ee + 0.6*\ee,0) circle [radius=1pt];
\filldraw[red] (\len*\ee + 0.7*\ee,0) circle [radius=1pt];
\filldraw[red] (\len*\ee + 0.8*\ee,0) circle [radius=1pt];
\filldraw[black!30!green] (5*\ee + \sqr*\ee + 1.1*\ee, \sqr*\ee) circle [radius=1pt];
\filldraw[black!30!green] (5*\ee + \sqr*\ee + 1.2*\ee, \sqr*\ee) circle [radius=1pt];
\filldraw[black!30!green] (5*\ee + \sqr*\ee + 1.3*\ee, \sqr*\ee) circle [radius=1pt];
\filldraw[black!30!green] (3*\ee-1.1*\sqr*\ee,-1.1*\sqr*\ee) circle [radius=1pt];
\filldraw[black!30!green] (3*\ee-1.2*\sqr*\ee,-1.2*\sqr*\ee) circle [radius=1pt];
\filldraw[black!30!green] (3*\ee-1.3*\sqr*\ee,-1.3*\sqr*\ee) circle [radius=1pt];
\filldraw[blue] (\ee-2.1*\sqr*\ee,2.1*\sqr*\ee) circle [radius=1pt];
\filldraw[blue] (\ee-2.2*\sqr*\ee,2.2*\sqr*\ee) circle [radius=1pt];
\filldraw[blue] (\ee-2.3*\sqr*\ee,2.3*\sqr*\ee) circle [radius=1pt];
\filldraw[blue] (5*\ee+2.1*\sqr*\ee,2.1*\sqr*\ee) circle [radius=1pt];
\filldraw[blue] (5*\ee+2.2*\sqr*\ee,2.2*\sqr*\ee) circle [radius=1pt];
\filldraw[blue] (5*\ee+2.3*\sqr*\ee,2.3*\sqr*\ee) circle [radius=1pt];
\draw[very thick,black!30!green] (3*\ee-1*\sqr*\ee,-1*\sqr*\ee) -- (3*\ee,0) (5*\ee+\sqr*\ee,\sqr*\ee) -- (5*\ee+\sqr*\ee+\ee,\sqr*\ee);
\draw[very thick,blue] (\ee-2*\sqr*\ee,2*\sqr*\ee) -- (\ee,0) (5*\ee,0) -- (5*\ee+2*\sqr*\ee,2*\sqr*\ee);
\draw[very thick,red](-0.5*\ee,0) -- (\ee,0) (5*\ee,0) -- (6.5*\ee,0);
\draw[very thick] (\ee,0) -- (5*\ee,0);
\draw[thick,rounded corners] (2.15*\ee-0.25,-.8-0.25) -- (2.15*\ee,-0.8) -- (2.15*\ee,-0.15) -- (1.35*\ee,-0.15) -- (1.35*\ee,-1.3);
\node at (1.75*\ee,-0.4) {\footnotesize $\ell_0$};
\draw[thick,rounded corners] (1.3*\ee-0.25,-1-0.25) -- (1.3*\ee,-1) -- (1.3*\ee,-0.15) -- (0.8*\ee,-0.15) -- (0.8*\ee,-1.3);
\node at (1.05*\ee,-0.4) {\footnotesize $\ell_1$};
\draw[thick,rounded corners] (4.15*\ee-0.25,-.8-0.25) -- (4.15*\ee,-0.8) -- (4.15*\ee,-0.15) -- (3.35*\ee,-0.15) -- (3.35*\ee,-1.3);
\node at (3.75*\ee,-0.4) {\footnotesize $a\ell_0$};
\draw[thick,rounded corners] (3.3*\ee-0.25,-1-0.25) -- (3.3*\ee,-1) -- (3.3*\ee,-0.15) -- (3*\ee,-0.15) -- (3*\ee-0.3,-0.15-0.3) -- (3*\ee+0.2,-0.15-0.8);
\node at (3.08*\ee,-0.4) {\footnotesize $a\ell_1$};
\filldraw (1.5*\ee,0) circle [radius=2.5pt];
\filldraw (2*\ee,0) circle [radius=2.5pt];
\node at (1.75*\ee,0) [label=above:{$e$}] {};
\filldraw (3.5*\ee,0) circle [radius=2.5pt];
\filldraw (4*\ee,0) circle [radius=2.5pt];
\node at (3.75*\ee,0) [label=above:{$ae$}] {};
\end{tikzpicture}
\caption{The set-up and proof of Proposition~\ref{prop:small cut pairs}.  The sets $X_{a,\pm}$ are shown in blue, the sets $X_{h,\pm}$ are shown in red and the set $T_a \cap T_h$ is shown in black.}\label{fig:inseparable}

\end{figure}
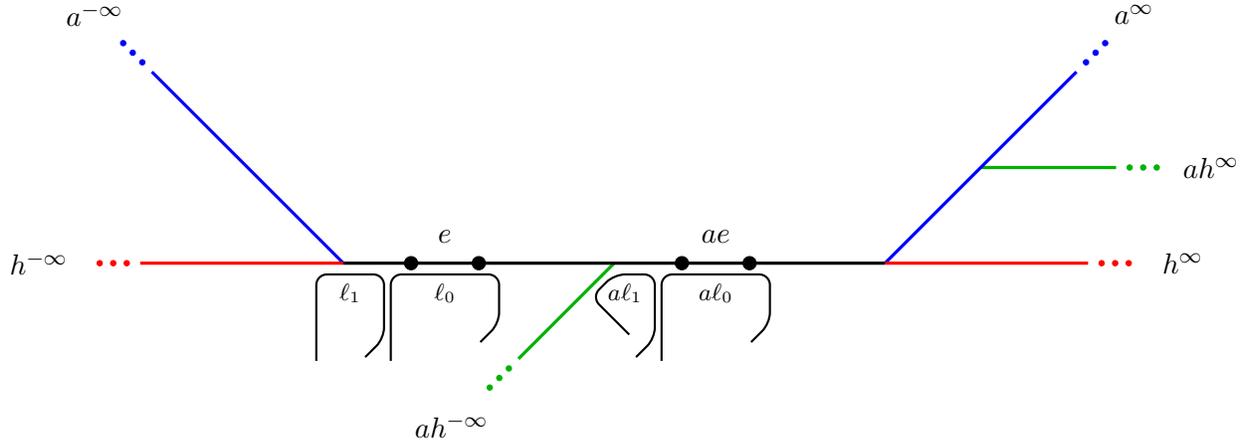

Let $Y_{a,+}$ be the direction of $T - T_h$ that contains $X_{a,+}$ and similarly define $Y_{a,-}$ as the direction of $T - T_h$ that contains $X_{a,-}$.  Since $\{q(a^{-\infty}),q(a^\infty)\}$ is a cut pair and as $\{q(h^{-\infty}),q(h^\infty)\}$ is an inseparable cut pair, the vertices $[Y_{a,-}]$ and $[Y_{a,+}]$ belong to the same component $C \subset \Wh_T(\calL_g,T_h)$.  Let $\ell$ be a geodesic that crosses the edge $e$ so that the edge of $\Wh_T(\calL_g,T_h)$ corresponding to $\ell$ does not belong to $C$.  Let $C'$ be the component $\Wh_T(\calL_g,T_h)$ that contains the edge corresponding to $\ell$. 

\medskip \noindent {\it Claim 1.} The points $q(ah^{-\infty})$ and $q(a^\infty)$ belong to different components of $\calD(\calL_g) - \{q(h^{-\infty}),q(h^\infty)\}$.

\begin{proof}[Proof of Claim 1]
There is a sequence of geodesics $\ell = \ell_0,\ldots,\ell_m$ in $\calL_g$ such that $\ell_m$ meets $X_{h,-}$ and where the edges corresponding to $\ell_{j-1}$ and $\ell_j$ in $\Wh_T(\calL_g,T_h)$ are incident on a common vertex for $j = 1,\ldots,m$.  By the choice of $a$, the edge corresponding to $a\ell$ belongs to $C'$ and the sequence of lines $a\ell = a\ell_0,\ldots,a\ell_m$ show that the edge corresponding to $a\ell_m$ also belongs to $C'$.  The direction $aX_{h,-} \subset T - T_h$ meets $a\ell_m$ and thus the vertex $[aX_{h,-}]$ also belongs to $C'$.  This verifies the claim.
\end{proof}

\medskip \noindent {\it Claim 2.} The points $q(ah^{\infty})$ and $q(a^\infty)$ belong to the same components of $\calD(\calL_g) - \{q(h^{-\infty}),q(h^\infty)\}$.

\begin{proof}[Proof of Claim 2]
This follows as the component $aX_{h,+}$ is contained in $X_{a,+}$.
\end{proof}

Claim 1 and Claim 2 imply that $\{q(h^{-\infty}),q(h^\infty)\}$ is separated by the cut pair $\{q(ah^{-\infty}),q(ah^\infty)\}$.  This is a contradiction as $\{q(h^{-\infty}),q(h^\infty)\}$ is inseparable.  Hence $T_a = T_h$ as desired.
\end{proof}

We will now prove the main result of this section.

\begin{proposition}\label{prop:splitting bound}
For all $L > 0$, there is an $R > 0$ with the following property.  If $g \in G$ is $\calZ$--simple but not simple nor quadratic and $T \in \calO_L(g)$ is a Grushko tree that is Whitehead reduced for $g$, then there exists a $\calZ$--splitting $S$ in which $g$ is elliptic and a non-peripheral element $a \in G$ with $\abs{a}_T \leq R$ and where $a$ fixes an edge in $S$. 
\end{proposition}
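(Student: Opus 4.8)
The plan is to take $R = R_0$, the constant furnished by Proposition~\ref{prop:small cut pairs} for the given $L$, and then to convert an inseparable loxodromic cut pair of $\calD(\calL_g)$ into the required $\calZ$--splitting. First I would observe that since $g$ is $\calZ$--simple but neither simple nor quadratic, Corollary~\ref{co:inseparable} supplies an inseparable loxodromic cut pair $\{q(h^\infty),q(h^{-\infty})\} \subset \calD(\calL_g)$. As $T \in \calO_L(g)$ is Whitehead reduced for $g$, Proposition~\ref{prop:small cut pairs} then produces a non-peripheral element $a \in G$ with $\abs{a}_T \leq R_0$ and $T_a = T_h$. Because the axes coincide, both $a$ and $h$ lie in the maximal cyclic subgroup stabilizing that axis, so they are powers of a common primitive element; consequently $\{q(a^\infty),q(a^{-\infty})\} = \{q(h^\infty),q(h^{-\infty})\}$, and $a$ itself generates an inseparable loxodromic cut pair of combinatorial length at most $R_0$.

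The heart of the argument is to realize this inseparable loxodromic cut pair as an edge of a $\calZ$--splitting. By Lemma~\ref{lem:boundary}, $\calD(\calL_g)$ is $G$--equivariantly homeomorphic to the Bowditch boundary of the relatively hyperbolic group $(G,\calA \cup \calN_g)$, and $\{q(a^\infty),q(a^{-\infty})\}$ is the pair of fixed points of the loxodromic element $a$. Invoking the correspondence between inseparable loxodromic cut pairs and splittings over two--ended subgroups, as developed in this setting by Bowditch~\cite{ar:Bowditch98}, Cashen--Macura~\cite{ar:CM11}, Cashen~\cite{ar:Cashen16}, and Hallmark--Hruska~\cite{ar:HH}, the $G$--orbit of this cut pair assembles into a simplicial $G$--tree $S$ whose edge stabilizers are the setwise stabilizers of the cut pairs; the stabilizer of $\{q(a^\infty),q(a^{-\infty})\}$ is the maximal cyclic subgroup $N_a$, so $a \in N_a$ fixes an edge of $S$.

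It then remains to check that $S$ is a genuine $\calZ$--splitting of $(G,\calA)$ in which $g$ is elliptic. The edge stabilizer $N_a$ is cyclic and non-peripheral with respect to $\calA$, since $a$ is non-peripheral. Each peripheral subgroup of $(G,\calA \cup \calN_g)$ — that is, every conjugate of an $A_i$ and every conjugate of $N_g$ — fixes a single parabolic point of the boundary, and a single point cannot be separated by a cut pair; hence each such subgroup is elliptic in $S$. In particular every $A_i$ fixes a point, so $S$ is a $(G,\calA)$--tree and thus a $\calZ$--splitting of $(G,\calA)$, while $N_g$ is elliptic, so $g$ is elliptic. (As a consistency check, $q(g^\infty)$ is not one of the two cut points: $a$ fixes an edge of a $\calZ$--splitting and is therefore simple by Lemma~\ref{lem:edge stabs in Z-splittings}, whereas $g$ is not simple, so $a$ and $g$ are not commensurable.)

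The main obstacle will be the passage carried out in the previous paragraph: making precise, in the free--product language used throughout, how an inseparable loxodromic cut pair of $\partial(G,\calA \cup \calN_g)$ yields a splitting whose edge group is \emph{exactly} $N_a$ and which is relative to the full peripheral structure $\calA \cup \calN_g$. Once the dual tree $S$ is produced and shown to keep $\calA$ and $N_g$ elliptic, the ellipticity of $g$, the $\calZ$--splitting property, and the bound $\abs{a}_T \leq R_0 = R$ all follow immediately, so this correspondence is the one step that genuinely requires care rather than bookkeeping.
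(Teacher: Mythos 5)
Your opening paragraph coincides with the paper's proof: the paper also sets $R = R_0$ from Proposition~\ref{prop:small cut pairs}, obtains the inseparable loxodromic cut pair from Corollary~\ref{co:inseparable}, and applies Proposition~\ref{prop:small cut pairs} to get the non-peripheral element $a$ with $\abs{a}_T \leq R_0$ and $T_a = T_h$. The divergence, and the genuine gap, is at the step you yourself flag as the main obstacle. The cut-pair-to-splitting machinery you cite (Bowditch, Cashen--Macura, Cashen, Hallmark--Hruska) is developed under the standing hypothesis that the boundary is connected \emph{without cut points} and not homeomorphic to a circle; this is exactly the form in which the paper uses it, via \cite[Proposition~4.6]{ar:HH}, inside Proposition~\ref{prop:trichotomy}. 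But $\calD(\calL_g)$ may well contain peripheral cut points --- this is precisely why the paper develops the cut point analysis of Section~\ref{sec:cut points}, and why Corollary~\ref{co:inseparable} produces the cut pair only by passing through the vertex continuum $B$ of Lemma~\ref{lem:B}. So the black box you invoke does not apply directly to $\calD(\calL_g)$, and the content you defer to the literature --- assembling the orbit of a single inseparable cut pair into a simplicial $G$--tree (discreteness of the associated pretree, identification of the edge group as exactly $N_a$, ellipticity of the full peripheral structure $\calA \cup \calN_g$) --- is not available there in the needed generality. Relatedly, your ellipticity argument (``a single point cannot be separated by a cut pair, hence the subgroup is elliptic'') is not a proof: a subgroup fixing one boundary point a priori yields only a fixed point \emph{or end} of a dual tree, and ruling out the end case is part of what the cited theorems do under their no-cut-point hypothesis.

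The paper's proof sidesteps all of this with one further idea that your proposal is missing: enlarge the line collection to $\calL_g \cup \calL_a$. In the further quotient $\calD(\calL_g \cup \calL_a)$ the inseparable loxodromic cut pair $\{q(a^\infty),q(a^{-\infty})\}$ becomes a cut \emph{point} $x$, and cut points --- unlike cut pairs --- are governed by the Dasgupta--Hruska cut point tree $\calT_{\rm cut}$ \cite[Theorem~1.2]{un:DH}, which exists for any connected Bowditch boundary, regardless of how many other cut points are present, and is already a simplicial $G$--tree with controlled (peripheral) stabilizers. Since the proof of Proposition~\ref{prop:small cut pairs} shows that $a$ preserves the components of $\calD(\calL_g) - \{q(a^\infty),q(a^{-\infty})\}$, the element $a$ fixes an edge of $\calT_{\rm cut}$ incident to the vertex $x$, and collapsing to the orbit of that edge yields the desired $\calZ$--splitting, with $g$ elliptic because $N_g$ is peripheral in the enlarged relatively hyperbolic structure. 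If you wanted to salvage your route, you would have to run the cut-pair correspondence inside $\calD_{(G_B,\calP_B)}(\calL_g^B)$, which has no cut points, and then blow up the vertex $B$ of the cut point tree as in Lemma~\ref{lem:B}; but even there, constructing the dual tree from a cut-pair orbit is exactly the technical debt that the paper's quotient trick is designed to avoid.
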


\begin{proof}
Let $g \in G$ be a non-peripheral $\calZ$--simple element that is not simple nor quadratic and fix a tree $T \in \calO_L(g)$ that is Whitehead reduced for $g$.  By Corollary~\ref{co:inseparable}, the decomposition space $\calD(\calL_g)$ contains an inseparable loxodromic cut pair.  

Let $R$ be the constant $R_0$ from Proposition~\ref{prop:small cut pairs} using $L$.  By Proposition~\ref{prop:small cut pairs}, there is a non-peripheral element $a \in G$ where $\abs{a}_{T} \leq R$ such that $\{q(a^\infty),q(a^{-\infty})\} \in \calD(\calL_g)$ is an inseparable loxodromic cut pair.  Thus the image of this cut pair in $\calD(\calL_g \cup \calL_a)$ is a cut point, denote it by $x$.  Let $\calT_{\rm cut}$ be the cut point tree for $\calD(\calL_g \cup \calL_a)$ and let $v \in V_1(\calT_{\rm cut})$ be the vertex corresponding to $x$.  As seen in the proof of Proposition~\ref{prop:small cut pairs}, the action of $a$ on $\calD(\calL_g) - \{q(a^\infty),q(a^{-\infty})\}$ preserves the components, and hence $a$ stabilizes an edge $e$ in $\calT_{\rm cut}$ that is incident to $x$.  Collapsing all edges of $\calT_{\rm cut}$ that do not lie in the orbit of $e$ results in the desired $\calZ$--splitting $S$.  Since the action of $G$ on $\calT_{\rm cut}$ is minimal, this $\calZ$--splitting is nontrivial.
\end{proof}


\section{Proof of Theorem~\ref{thm:bounded projections} for $\calZ$--simple elements}\label{sec:elliptic}

We can now prove Theorem~\ref{thm:bounded projections} for $\calZ$--simple elements.  As for simple elements (Proposition~\ref{prop:length bounded simple}) and quadratic elements (Proposition~\ref{prop:length bounded quadratic}), the strategy is that given $T \in \calO_L(g)$, we seek to find a $\calZ$--splitting in which $g$ is elliptic whose distance to $\pi(T)$ is bounded in terms of $L$.  To do so, we first find a $\calZ$--splitting in which $g$ is elliptic and where some edge stabilizer has length bounded in terms of $L$ using the work in Section~\ref{sec:short}.

\begin{proposition}\label{prop:length bounded elliptic}
Let $(G,\calA)$ be a non-sporadic torsion-free free product.  For all $L > 0$, there is a $D_2 > 0$ such that if $g \in G$ is $\calZ$--simple then the diameter of $\pi(\calO_L(g))$ is at most $D_2$.
\end{proposition}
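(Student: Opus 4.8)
The plan is to split into the three cases anticipated in the introduction according to the topological type of $g$, and to assemble the earlier propositions. Since $g$ is assumed $\calZ$--simple, it is either simple, or quadratic (hence not simple), or neither simple nor quadratic. In the first case Proposition~\ref{prop:length bounded simple} already bounds the diameter of $\pi(\calO_L(g))$ by $D_0$, and in the second Proposition~\ref{prop:length bounded quadratic} bounds it by $D_1$. I would therefore set $D_2 = \max\{D_0, D_1, 2L+2R+5\}$, where $R = R(L)$ is the constant supplied by Proposition~\ref{prop:splitting bound}, and reduce to the case that $g$ is neither simple nor quadratic. (If $\calO_L(g)$ is empty the claim is vacuous.)

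In this remaining case I would follow the template of the two earlier propositions by producing, for each $T \in \calO_L(g)$, a single $\calZ$--splitting $S_T$ in which $g$ is elliptic and with $d(\pi(T), S_T)$ bounded by $L+R+2$. Granting this per-tree estimate, the proof concludes exactly as in Propositions~\ref{prop:length bounded simple} and~\ref{prop:length bounded quadratic}: for any $T_0, T_1 \in \calO_L(g)$ the element $g$ is elliptic in both $S_{T_0}$ and $S_{T_1}$, so $d(S_{T_0}, S_{T_1}) \leq 1$ by the second clause of Definition~\ref{def:zf}, and the triangle inequality yields $d(\pi(T_0), \pi(T_1)) \leq 2(L+R+2)+1 = 2L+2R+5$.

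To obtain the per-tree estimate, I would first apply Corollary~\ref{co:whitehead reduced} (valid since $g$ is not simple) to replace $T$ by a Grushko tree $T' \in \calO_L(g)$ that is Whitehead reduced for $g$ with $d(\pi(T), \pi(T')) \leq L$. Proposition~\ref{prop:splitting bound} then furnishes a $\calZ$--splitting $S_T$ in which $g$ is elliptic together with a non-peripheral element $a$ satisfying $\abs{a}_{T'} \leq R$ and fixing an edge of $S_T$. The crucial point is that $a$, being an edge stabilizer in a $\calZ$--splitting, is \emph{simple} by Lemma~\ref{lem:edge stabs in Z-splittings}. Since $\abs{a}_{T'} \leq R$ we have $T' \in \calO_R(a)$, so the argument in the proof of Proposition~\ref{prop:length bounded simple}, applied to the simple element $a$ (via Proposition~\ref{prop:whitehead criterion}, Lemma~\ref{lem:whitehead reduced}\eqref{item:simple}, and Lemma~\ref{lem:simple}), produces a free splitting $S_a$ in which $a$ is elliptic with $d(\pi(T'), S_a) \leq R+1$. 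Because $a$ is elliptic in both $S_a$ and $S_T$, we get $d(S_a, S_T) \leq 1$, and combining the three bounds gives $d(\pi(T), S_T) \leq L + (R+1) + 1 = L+R+2$, as needed. Here $a$ plays the role of the boundary curve $\alpha$ of the filled subsurface in the ``non-filling'' case of Theorem~\ref{thm:bounded surface projections}, and the simplicity of $a$ is what lets us fall back on the already-established simple case.

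The conceptual difficulty of this proposition---locating a bounded-length edge stabilizer $a$ inside a $\calZ$--splitting for $g$---has already been overcome in Proposition~\ref{prop:splitting bound}, whose proof rests on the decomposition-space analysis of the preceding sections. Consequently I expect the present argument to be essentially bookkeeping: the one thing requiring care is keeping straight which element bridges each inequality, namely $a$ linking $\pi(T')$ to $S_T$ (both having $a$ elliptic) and $g$ linking $S_{T_0}$ to $S_{T_1}$ (both having $g$ elliptic). No new obstacle should arise beyond correctly tracking these constants.
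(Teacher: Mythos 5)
Your proposal is correct and follows essentially the same route as the paper's proof: the same three-way case split, the same chain Corollary~\ref{co:whitehead reduced} $\to$ Proposition~\ref{prop:splitting bound} $\to$ Lemma~\ref{lem:edge stabs in Z-splittings} $\to$ the simple-element argument of Proposition~\ref{prop:length bounded simple}, bridging first via $a$ (elliptic in both $S_a$ and $S_T$) and then via $g$ (elliptic in both $S_{T_0}$ and $S_{T_1}$), with the identical constant $2L+2R+5$. The only cosmetic difference is that you take $D_2$ as a maximum of $D_0$, $D_1$, and $2L+2R+5$, whereas the paper notes directly that $D_0, D_1 \leq 2L+2R+5$; these agree.
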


\begin{proof}
Let $R$ be the constant $R$ from Proposition~\ref{prop:splitting bound} for $L$ and set $D_2 = 2L + 2R + 5$.
        
By Proposition~\ref{prop:length bounded simple}, if $g$ is simple then the diameter of $\pi(\calO_L(g))$ is at most $D_0 = 2L + 3 \leq D_2$.

By Proposition~\ref{prop:length bounded quadratic}, if $g$ is quadratic then the diameter of $\pi(\calO_L(g))$ is at most $D_1 = 2L + 5 \leq D_2$.

Lastly, we suppose that $g$ is not simple nor quadratic and consider a Grushko tree $T_0 \in \calO_L(g)$.   As $g$ is not simple, by Corollary~\ref{co:whitehead reduced} there is a Grushko tree $T \in \calO_L(g)$ where $d(\pi(T_0),\pi(T)) \leq L$ that is Whitehead reduced for $g$.  By Proposition~\ref{prop:splitting bound}, there is a $\calZ$--splitting $S_0 \in \ZF$ in which $g$ is elliptic and an element $a \in G$ where $\abs{a}_{T} \leq R$ and where $a$ fixes an edge in $S_0$.  By Lemma~\ref{lem:edge stabs in Z-splittings}, the element $a$ is simple.  As shown in the proof of Proposition~\ref{prop:length bounded simple}, since $a$ is simple, there is a $\calZ$--splitting $S \in \ZF$ where $a$ has a fixed point and such that $d(\pi(T),S) \leq R + 1$.  Since $a$ has a fixed point in both $S$ and $S_0$, we have $d(S,S_0) \leq 1$.  Hence 
\begin{equation*}
d(\pi(T_0),S) \leq d(\pi(T_0),\pi(T)) + d(\pi(T),S) + d(S,S_0) \leq L + R + 2.
\end{equation*}

Repeating this argument for another tree $T_1 \in \calO_L(g)$, we find another $\calZ$--splitting $S_1 \in \ZF$ in which $g$ is elliptic and such that $d(\pi(T_1),S_1) \leq L + R + 2$. Since $g$ is elliptic in both $S_0$ and $S_1$, we have $d(S_0, S_1)\leq 1$, and hence
\begin{equation*}
d(\pi(T_0),\pi(T_1)) \leq d(\pi(T_0),S_0) + d(S_0,S_1) + d(S_1,\pi(T_1)) \leq 2L + 2R + 5.\qedhere
\end{equation*}

\end{proof}


\section{Proof of Theorem~\ref{thm:bounded projections}}\label{sec:main proof}

To complete the proof of Theorem~\ref{thm:bounded projections} we use contradiction.  Supposing that for some fixed $L$ the diameter of $\pi(\calO_L(g_n))$ is unbounded for some sequence $(g_n)$ of non-peripheral elements, we will find a $\calZ$--simple element $g$ where the diameter of $\pi(\calO_{L'}(g))$ is infinite for some $L'$.  This is a contradiction by Proposition~\ref{prop:length bounded elliptic}.  

In order to obtain this contradiction, we must work with the closure $\overline{\calO}$ of the deformation space and we recall this space now.  For more information, see the work of Guirardel--Levitt~\cite{ar:GL07} or Horbez~\cite{ar:Horbez17}.  Given an Grushko tree $T \in \calO$ and a non-peripheral element $g \in G$, by $\|g\|_T$ we denote the translation length of $g$, that is $\|g\|_T = d_T(x,gx)$ for any $x \in T_g$.  Thus each tree determines a \emph{length function} $\| \param \|_T \from G \to \RR$.  Culler--Morgan proved that the assignment $T \mapsto \| \param \|_T$ defines an injective function $\calO \to \RR^G$~\cite{ar:CM87}.  The closure of $\calO$ in $\RR^G$ is denoted $\overline{\calO}$.  Horbez proved that $\overline{\calO}$ is projectively compact and identified this closure with a space of action of $G$ on $\RR$--trees~\cite[Proposition~2.3]{ar:Horbez17}.  Specifically, given a sequence $(T_n) \subset \overline{\calO}$, after passing to a subsequence, there are real numbers $(\lambda_n)$ such that the length functions $\frac{1}{\lambda_n}\| \param \|_{T_n}$ converge to the length function $\| \param \|_T$ of an $\RR$--tree equipped with an action $G$~\cite[Theorem~1]{ar:Horbez17}.  We observe that if $\| g \|_{T_n} \to \infty$ for some $g$, then necessarily we have that $\lambda_n \to \infty$.  

We repeat the statement of Theorem~\ref{thm:bounded projections} for the convenience of the reader.  

\begin{restate}{Theorem}{thm:bounded projections}
Let $(G,\calA)$ be a non-sporadic torsion-free free product.  For all $L > 0$, there is a $D > 0$ such that for any non-peripheral element $g \in G$, the diameter of $\pi(\calO_L(g)) \subset \ZF$ is at most $D$.
\end{restate}

\begin{proof}
Suppose that the theorem is false.  Then there is an $L$ such that for all $n \geq 0$ there is a non-peripheral element $g_n \in G$ for which the diameter of $\pi(\calO_L(g_n))$ is greater than $n$.  We fix Grushko trees $S_n,T_n \in \calO_L(g_n)$ with $d(\pi(S_n),\pi(T_n)) > n$.  As scaling the length of edges does not change $\abs{g_n}_{S_n}$ nor the image $\pi(S_n)$, we can scale all of the edges of $S_n$ to have length one so that $\abs{g_n}_{S_n} = \| g_n \|_{S_n}$.  We similarly scale the edges on each $T_n$.  After passing to subsequences and possibly replacing $g_n$, $S_n$ and $T_n$ respectively by $\theta_n^{-1}(g_n)$, $S_n\theta_n$ and $T_n\theta_n$ for some $\theta_n \in \Out(G,\calA)$, we can assume that $(S_n)$ is a constant sequence, $S_n = S$.  With $S$ and these new sequences $(g_n)$ and $(T_n)$, we have that $\|g_n\|_{S}, \|g_n\|_{T_n} \leq L$ and $d(\pi(S),\pi(T_n)) \to \infty$.  

If $h \in G$ is simple, then we must have that $\|h\|_{T_n} \to \infty$ by Proposition~\ref{prop:length bounded simple} as $d(\pi(S),\pi(T_n)) \to \infty$.  Therefore, as described in the beginning of this section, after passing to subsequences, we have a sequence $(\lambda_n)$ of positive real numbers and an $\RR$--tree $T \in \overline{\calO}$ where $\frac{1}{\lambda_n}\|\param\|_{T_n} \to \|\param\|_T$ and for which $\lambda_n \to \infty$.

Next, by passing to subsequences and replacing each element in the sequence $(g_n)$ by a conjugate if necessary, we can assume that there is an edge $e$ in $S$ that is contained in each of the axes $S_{g_n}$.  As $\bd(G,\calA)$ is compact, after passing to a subsequence, we have that $g^\infty_n \to \alpha \in \bd(G,\calA)$ and $g_n^{-\infty} \to \omega \in \bd(G,\calA)$.  As the axes $S_{g_n}$ all contain the edge $e$, we must have that $\alpha \neq \omega$.  


\medskip \noindent {\it Claim.} There is a non-peripheral element $g \in G$ for which $\|g\|_{T_n}$ stays bounded. 

\begin{proof}[Proof of Claim]
As the axes $S_{g_n}$ are periodic and $\abs{g_n}_S$ is bounded, we either have that $\alpha,\omega \in \bd_\infty(G,\calA)$ or that $\alpha,\omega \in V_\infty(G,\calA)$.  We deal with these cases one at a time.

First, assume that $\alpha,\omega \in \bd_\infty(G,\calA)$.  Thus, the axes of the elements $g_n$ overlap in longer and longer segments in $S$.  As the translation lengths are bounded, this means that the sequence is eventually constant, hence $g_n = g$ for some $g \in G$ and large enough $n$.  In this case, this is our non-peripheral element for which $\|g\|_{T_n}$ stays bounded.

Otherwise, we have that $\alpha,\omega \in V_\infty(G,\calA)$.  Let $v$ and $w$ be the distinct vertices of $S$ that realize $\alpha$ and $\omega$.  Fix nontrivial elements $h_0 \in \Stab_S(v)$ and $h_1 \in \Stab_S(w)$ and set $g = h_0h_1$.  Now in $T_n$ the distance between $v_n$ and $w_n$, the realizations of $\alpha$ and $\omega$, stays bounded as $\|g_n\|_{T_n}$ is bounded.  This means that the translation length $\|g\|_{T_n}$ is bounded as well (it is twice the distance between $v_n$ and $w_n$).  In this case, this is our non-peripheral element for which $\|g\|_{T_n}$ stays bounded.      
\end{proof}

To conclude, we observe that as $\lambda_n \to \infty$, since $\|g\|_{T_n}$ stays bounded, we have that $g$ is elliptic in $T$ as $\|g\|_T = \lim_{n \to \infty} \frac{\|g\|_{T_n}}{\lambda_n}$.  It then follows from work of Guirardel--Levitt 
that $g$ is $\calZ$--simple~\cite[Corollary~9.10]{ar:GL15}.  However, boundedness of $\|g\|_{T_n} = \abs{g}_{T_n}$ implies that the diameter of $\pi(\calO_{L'}(g))$ is infinite for some $L'$.  This is a contradiction of Proposition~\ref{prop:length bounded elliptic}. 
\end{proof}


%
%
%
%


\bibliography{bib}
\bibliographystyle{acm}

\end{document}